\numberwithin{equation}{section}
\theoremstyle{plain}
\newtheorem{theorem}{Theorem}[section]
\newtheorem{lemma}{Lemma}[section]
\newtheorem{corollary}{Corollary}[section]
\newtheorem{proposition}{Proposition}[section]
\newtheorem{assumption}[theorem]{Assumption}
\theoremstyle{definition}
\newtheorem{remark}[theorem]{Remark}
\newtheorem{example}[theorem]{Example}
\begin{document}

\begin{frontmatter}
\title{Stein's method for functions of multivariate normal random variables}
\runtitle{Functions of multivariate normal random variables}

\begin{aug}
\author{\fnms{Robert E.} \snm{Gaunt}\ead[label=e1]{robert.gaunt@manchester.ac.uk}}

\runauthor{R. E. Gaunt}

\affiliation{The University of Manchester\thanksmark{m1}}

\address{School of Mathematics\\
The University of Manchester\\
Manchester\\
M13 9PL\\
United Kingdom \\
\printead{e1}}
\end{aug}

\begin{abstract}
By the continuous mapping theorem, if a sequence of $d$-dimensional random vectors  $(\mathbf{W}_n)_{n\geq1}$ converges in distribution to a multivariate normal random variable $\Sigma^{1/2}\mathbf{Z}$, then the sequence of random variables $(g(\mathbf{W}_n))_{n\geq1}$ converges in distribution to $g(\Sigma^{1/2}\mathbf{Z})$ if $g:\mathbb{R}^d\rightarrow\mathbb{R}$ is continuous. In this paper, we develop Stein's method for the problem of deriving explicit bounds on the distance between $g(\mathbf{W}_n)$ and $g(\Sigma^{1/2}\mathbf{Z})$ with respect to smooth probability metrics.
We obtain several bounds for the case that the $j$-component of $\mathbf{W}_n$ is given by $W_{n,j}=\frac{1}{\sqrt{n}}\sum_{i=1}^nX_{ij}$, where the $X_{ij}$ are independent.  In particular, provided $g$ satisfies certain differentiability and growth rate conditions, we obtain an order $n^{-(p-1)/2}$ bound, for smooth test functions, if the first $p$ moments of the $X_{ij}$ agree with those of the normal distribution.  If $p$ is an even integer and $g$ is an even function, this convergence rate can be improved further to order $n^{-p/2}$.  These convergence rates are shown to be of optimal order.  We apply our general bounds to some examples, which include the distributional approximation of asymptotically chi-square distributed statistics; the approximation of expectations of smooth functions of binomial and Poisson random variables; rates of convergence in the delta method; and a quantitative variance-gamma approximation of the $D_2^*$ statistic for alignment-free sequence comparison in the case of binary sequences. 
\end{abstract}

\begin{keyword}[class=MSC]
\kwd[Primary ]{60F05}
\end{keyword}

\begin{keyword}
\kwd{Stein's method}
\kwd{functions of multivariate normal random variables}
\kwd{multivariate normal approximation}
\kwd{rate of convergence}  
\kwd{delta method}
\kwd{sequence comparison}
\end{keyword}

\end{frontmatter}

\section{Introduction}

\subsection{Stein's method for multivariate normal approximation}

In 1972, Stein \cite{stein} introduced a powerful method that allows one to bound the distance between the distributions of a random variable $W$ and a standard normal random variable $Z$ with respect to a probability metric.  The basic approach, as described in detail in \cite{stein2} (see also \cite{chen} for a detailed introduction), involves two steps.  The first is to solve the so-called Stein equation
\begin{equation} \label{normal equation} f''(w)-wf'(w)=h(w)-\mathbb{E}h(Z),
\end{equation} 
where the test function $h$ is real-valued.  Bounds for the solution, $f$, and its derivatives are then established.  In the second step, the expectation 
\begin{equation}\label{normal e}\mathbb{E}[f''(W)-Wf'(W)]
\end{equation}
is bounded, typically through the use of coupling techniques, which, via (\ref{normal equation}), leads to a bound for the quantity of interest $\mathbb{E}h(W)-\mathbb{E}h(Z)$.  This then allows one to obtain bounds for the distance between the distributions of $W$ and $Z$ with respect to probability metrics of the form 
\begin{equation*}d_{\mathcal{H}}(\mathcal{L}(W),\mathcal{L}(Z))=\sup_{h\in\mathcal{H}}|\mathbb{E}h(W)-\mathbb{E}h(Z)|,
\end{equation*}
where the supremum is taken over a class of functions $\mathcal{H}$.  In the Stein's method literature, common classes of test functions include
\begin{align*}\mathcal{H}_{\mathrm{K}}&=\{\mathbf{1}(\cdot\leq z)\,|\,z\in\mathbb{R}\}, \\
\mathcal{H}_{\mathrm{W}}&=\{h:\mathbb{R}\rightarrow\mathbb{R}\,|\,\text{$h$ is Lipschitz, $\|h'\|\leq1$}\}, \\
\mathcal{H}_{p}&=\{h\in C^p(\mathbb{R})\,|\,\|h^{(k)}\|\leq1 \:\text{for all $1\leq k\leq p$}\},
\end{align*}
which give the Kolmogorov, Wasserstein and smooth Wasserstein (for $p\geq1$) distances, which we denote by $d_{\mathrm{K}}$, $d_{\mathrm{W}}$ and $d_{\mathcal{H}_p}$, respectively.  (Here and elsewhere in the paper $\|f\|:=\|f\|_{\infty}=\sup_{x\in\mathbb{R}}|f(x)|$.)  Other variants include restricting the smooth Wasserstein distance by further requiring that $\|h\|\leq1$ (see, for example, \cite{swan16}), or weakening the conditions to only require that $\|h^{(p)}\|\leq1$ (see, for example, \cite{f18}).

By recognising the left-hand side of the Stein equation (\ref{normal equation}) as the generator of an Ornstein-Uhlenbeck process, \cite{barbour2} and \cite{gotze} extended Stein's method for normal approximation to the multivariate normal distribution.  A generalisation of (\ref{normal equation}) to the multivariate normal distribution $\mathrm{MVN}(\mathbf{0},\Sigma)$ with mean $\mathbf{0}\in\mathbb{R}^d$ and covariance matrix $\Sigma\in\mathbb{R}^{d\times d}$ (see \cite{goldstein1}) is given by
\begin{equation} \label{mvn145} \nabla^T\Sigma\nabla f(\mathbf{w})-\mathbf{w}^T\nabla f(\mathbf{w})=h(\mathbf{w})-\mathbb{E}h(\Sigma^{1/2}\mathbf{Z}),
\end{equation}
where $\mathbf{Z}$ denotes a random vector having standard multivariate normal distribution of dimension $d$.  If $h:\mathbb{R}^d\rightarrow\mathbb{R}$ is Lipschitz, then a solution to (\ref{mvn145}) exists and is given by
\begin{equation}\label{mvnsoln}f(\mathbf{w})=-\int_{0}^{\infty}[\mathbb{E}h(\mathrm{e}^{-s}\mathbf{w}+\sqrt{1-\mathrm{e}^{-2s}}\Sigma^{1/2}\mathbf{Z})-\mathbb{E}h(\Sigma^{1/2}\mathbf{Z})]\,\mathrm{d}s
\end{equation}
(see \cite{fang18}, as well as \cite{barbour2} and \cite{kdv17} for an analagous solution for the Stein equation for approximation by Brownian motion on $[0,1]$). If $h$ is $k$ times differentiable, then the solution (\ref{mvnsoln}) satisfies the bound (see \cite{barbour2} and \cite{goldstein1}):
\begin{equation}\label{cheque} \bigg\|\frac{\partial^k f(\mathbf{w})}{\prod_{j=1}^k\partial w_{i_j}}\bigg\|\leq \frac{1}{k}\bigg\|\frac{\partial^k h(\mathbf{w})}{\prod_{j=1}^k\partial w_{i_j}}\bigg\|, \quad k\geq 1.
\end{equation}
 If we also suppose $\Sigma$ is positive definite, we can obtain a bound involving one fewer derivative of $h$ (see \cite{dobler thesis, gaunt rate}):
\begin{equation}\label{charm}\bigg\|\frac{\partial^k f(\mathbf{w})}{\prod_{j=1}^k\partial w_{i_j}}\bigg\|\leq\frac{\Gamma(\frac{k}{2})}{\sqrt{2}\Gamma(\frac{k+1}{2})}\min_{1\leq l\leq k}\bigg\{|\mathrm{row}_{i_l}(\Sigma^{-1/2})| \bigg\|\frac{\partial^{k-1} h(\mathbf{w})}{\prod_{\stackrel{1\leq j\leq k}{j\not=l}}\partial w_{i_j}}\bigg\|\Bigg\}, \quad k\geq 2,
\end{equation}
where $|\mathrm{row}_{i_l}(\Sigma^{-1/2})|$ is the Euclidean norm of the $i_l$-th row of $\Sigma^{-1/2}$.  Similar bounds for the derivatives of $f$ as a $k$-linear form can also be found in \cite{gaunt rate} and \cite{meckes}.  It was shown by \cite{daly} that the solution of (\ref{normal equation}) satisfies the bound
\begin{equation}\label{dalyb}\|f^{(k)}\|\leq 2\|h^{(k-2)}\|, \quad k\geq 3.
\end{equation}
This bound has the attractive property of involving two fewer derivatives of the test function $h$ than the solution $f$, although this improvement is not possible for multivariate case (see \cite{raic clt}). 

 When applying Stein's method to derive bounds for normal and multivariate normal approximation one typically requires bounds on at least the third order derivatives of the solution of the Stein equation.  In the univariate case the bound (\ref{dalyb}) with $k=3$ can be used to derive bounds in the Wasserstein distance, but in the multivariate case the bounds (\ref{cheque}) and (\ref{charm}) only allow for bounds to be given in the weaker $d_{\mathcal{H}_p}$ metric for $p\geq2$.  For this reason, until recently  bounds for multivariate normal approximation were mostly given in smooth Wasserstein metrics with $p\geq2$; bounds with sub-optimal order could be given in stronger metrics by applying smoothing techniques (see, for example, part 2 of Proposition 1.2 of \cite{ross}), though.  The very recent works of \cite{bon,cfp17,fang18,f18b,gms} have, however,  used novel implementations of Stein's method that bypass these technical difficulties and have established optimal or near-optimal bounds on the rate of convergence of the usual standardised sum of independent random vectors to the limiting multivariate normal distribution, under a variety of different assumptions.  We also refer the reader to \cite{zhai} for a recent complementary reference that attacks this problem without using Stein's method.  The approach taken in this paper is to obtain suitable analogues of the bounds (\ref{cheque}) -- (\ref{dalyb}) for unbounded test functions.  As such, for the multivariate case $d\geq2$ all quantitative limit theorems derived in this paper will be given in smooth Wasserstein metrics with $p\geq2$.


\subsection{Functions of multivariate normal random variables and a general transfer principle}\label{sec2.1}

By the continuous mapping theorem, if a sequence of $d$-dimensional random vectors  $(\mathbf{W}_n)_{n\geq1}$ converges in distribution to a multivariate normal random variable $\Sigma^{1/2}\mathbf{Z}$, then for any continuous function $g:\mathbb{R}^d\rightarrow\mathbb{R}$, $(g(\mathbf{W}_n))_{n\geq1}$ converges in distribution to $g(\Sigma^{1/2}\mathbf{Z})$. In this paper, we develop Stein's method for the problem of obtaining explicit bounds for rate of convergence of the sequence of random variables $(g(\mathbf{W}_n))_{n\geq1}$ to $g(\Sigma^{1/2}\mathbf{Z})$.  (From now, for ease of notation, we shall drop the subscript from $\mathbf{W}_n$.)  The general approach that shall be described in this section can in principle be applied to treat any prelimit of the form $g(\mathbf{W})$, where $g$ satisfies certain differentiability and growth rate conditions (which will be described shortly) and $\mathbf{W}$ can be well-approximated by a multivariate normal random variable by Stein's method for multivariate normal approximation.  However, the quantitative limit theorems that are derived in Section \ref{sec3rd} only treat the (important) case that the components of $\mathbf{W}$ are standardised sums of independent random variables; some reasons for imposing this restriction are given in Section \ref{secsum}.


Many standard probability distributions arise naturally as functions of multivariate normal random variables, such as the chi-square ($\chi_{(d)}^2\stackrel{\mathcal{D}}{=}Z_1^2+\cdots+ Z_d^2$), chi, log-normal and $t$-distribution; for further examples see \cite{stuart}.  Moreover, many widely used statistics arise as functions of asymptotically multivariate normally distributed random variables, such as Pearson's statistic, Friedman's statistic and the popular $D_2$, $D_2^S$ and $D_2^*$ statistics from alignment-free sequence comparison (see \cite{lippert} and \cite{waterman}).  Also, limiting distributions involving functions of multivariate normal random variables have recently occurred in the context of Malliavin calculus (the Malliavin-Stein method is described in detail in \cite{np12}), such as variance-gamma \cite{eichelsbacher} and linear combinations of chi-square random variables \cite{aaps17, azmooden}; see also \cite{eden2} for other non-normal limits.  

One of the strengths of Stein's method is that it is readily adapted to other distributions; for a comprehensive overview see \cite{ley}.  In particular, the method has been extended to many distributions that occur as functions of multivariate normal random variables, such as the chi-square \cite{gaunt chi square}, \cite{luk}, chi \cite{pekoz3}, half-normal \cite{dobler}, variance-gamma \cite{gaunt vg}, products  of normal and chi-square random variables \cite{gaunt pn, gaunt ngb, gms16} and linear combinations of centered chi-square random variables \cite{aaps19}.


In adapting Stein's method to these distributions, a suitable Stein equation for the distribution needs to be found, together with bounds on the solution and its (lower order) derivatives.
For certain distributions, despite recent advances (see \cite{dgv15}), this can be difficult; for example, consider the product normal distribution \cite{gaunt pn} for which only limited progress has been made towards obtaining bounds for the derivatives of the solution.  The approach described in this paper, which involves considering the multivariate normal Stein equation rather than the distribution's specific Stein equation removes this difficulty by treating distributions that arise as functions of multivariate normal random variables in a general framework (see also \cite{aaps17, nps2014} for recent works that derive approximation theorems without directly bounding the solution of the Stein equation for the limit law).    

Moreover, in certain situations it may be more natural to frame a problem in terms of the multivariate normal Stein equation than the Stein equation for the limiting distribution.  Recently, \cite{gaunt chi square} used such an approach to obtains bounds on the rate of convergence of Pearson's statistic to its limiting chi-square distribution.  


To obtain distributional approximations for statistics that are asymptotically distributed as a function of a multivariate normal, we simply consider the multivariate normal Stein equation (\ref{mvn145}) with test function $h(g(\cdot))$:
\begin{equation} \label{mvng} \nabla^T\Sigma\nabla f(\mathbf{w})-\mathbf{w}^T\nabla f(\mathbf{w})=h(g(\mathbf{w}))-\mathbb{E}h(g(\Sigma^{1/2}\mathbf{Z})).
\end{equation}  
One can then bound the expectation 
\begin{equation}\label{emvn}\mathbb{E}[\nabla^T\Sigma\nabla f(\mathbf{W})-\mathbf{W}^T\nabla f(\mathbf{W})]
\end{equation}
using the various coupling techniques developed for multivariate normal approximation (see \cite{goldstein 2, goldstein1, reinert 1, meckes}).  However, in general the derivatives of the test function $h(g(\mathbf{w}))$ will be unbounded (for the $\chi_{(1)}^2$ distribution, $g(w)=w^2$ and $g'(w)=2w$) and therefore the derivatives of the solution 
\begin{equation}\label{mvnsolnh}f(\mathbf{w})=-\int_{0}^{\infty}[\mathbb{E}h(g(\mathrm{e}^{-s}\mathbf{w}+\sqrt{1-\mathrm{e}^{-2s}}\Sigma^{1/2}\mathbf{Z}))-\mathbb{E}h(g(\Sigma^{1/2}\mathbf{Z}))]\,\mathrm{d}s
\end{equation}
will also in general be unbounded.  Therefore one cannot apply inequalities (\ref{cheque}), (\ref{charm}) and (\ref{dalyb}) to bound the solution's derivatives.  This simple, but powerful approach was first used by \cite{pickett} and \cite{reinert 0} in which the authors invoked the multivariate normal Stein equation to derive bounds for chi-square approximation.  

In this paper, we develop a general theory based on this approach.  The approach will be effective when the prelimit random variable is of the form $g(\mathrm{W})$, where $g:\mathbb{R}^d\rightarrow\mathbb{R}$ is sufficiently regular and $\mathrm{W}$ is well-approximated by a multivariate normal random variable and is such that the expectation (\ref{emvn}) can be estimated using one of the standard couplings for Stein's method for multivariate normal approximation.  There may be instances in which one cannot decompose the prelimit is such a way, in which case using the Stein equation for the limit distribution may prove to be more fruitful.  However, the combination of the fact there is a well established literature on such coupling techniques and that the estimates we obtain for the derivatives of the solution of the Stein equation with test function $h(g(\cdot))$ (see Section \ref{sec2nd}) are relatively simple means that there is potentially a wide range of problems that can be tackled via the techniques developed in this paper.  Indeed, as already noted it is natural to view the approximation of Pearson's statistic in this way, and other important statistics can also be treated; see \cite{gr16}.  The $D_2$ and $D_2^*$ statistics from alignment-free sequence comparison  also seem to naturally fall into this framework; see Section \ref{d2sec} for further details.

\subsection{Summary of results}\label{secsum}

In Section \ref{sec2nd}, we obtain general bounds, which apply for all $g$ that satisfy certain differentiability and growth rate conditions, for the derivatives of the solution (\ref{mvnsolnh}) of the $\mathrm{MVN}(\mathbf{0},\Sigma)$ Stein equation with test function $h(g(\cdot))$.  As special cases of these general bounds, we obtain bounds for the case that the lower order partial derivatives of $g$ have a polynomial ($A+B\sum_{k=1}^d|w_k|^{r_k}$, where the $r_k$ are non-negative) or exponential growth rate ($A\exp(t\sum_{k=1}^d|w_k|^c)$, where $0<c\leq 2$).  Our bounds for the derivatives of the solution of the Stein equation are in general unbounded as $|\mathbf{w}|\rightarrow\infty$, which means that more care is needed in bounding the quantity (\ref{emvn}) than in the usual multivariate normal setting in which the uniform bounds (\ref{cheque}), (\ref{charm}) and (\ref{dalyb}) can be applied. 

Since our bounds for the partial derivatives of the solution of the Stein equation involve derivatives of the test function $h$, the approximation theorems considered in this paper will only hold for smooth test functions.  For the univariate case $g:\mathbb{R}\rightarrow\mathbb{R}$ some of our bounds will be given in the Wasserstein metric, but all of our bounds for multivariate case $g:\mathbb{R}^d\rightarrow\mathbb{R}$, $d\geq2$, will be given in the smooth Wasserstein metric $d_{\mathcal{H}_p}$ for $p\geq2$ (see Remark \ref{wass}).  Bounds resulting from an application of Stein's method are often given in such smooth test function metrics, particularly in multivariate settings in which there are often technical difficulties in obtaining bounds in non-smooth metrics or when faster than $O(n^{-1/2})$ convergence rates are sought (see, for example, \cite{bj3, dobler beta,f18, gaunt chi square, goldstein, goldstein1, reinert 1}). Bounds for non-smooth test functions are often more informative (see, for example, \cite{gotze}), although, as noted by \cite{goldstein}, an advantage of working with smooth test functions is that it is sometimes possible to obtain improved error bounds that may not hold for non-smooth test functions.  This feature will be exploited in this paper: our proofs rely on the assumption that the test functions are smooth.

In Section \ref{sec3rd}, we illustrate our approach in the following setting. We consider the case that the derivatives of $g$ have polynomial growth and that the components of $\mathbf{W}=(W_1,\ldots,W_d)^T$ are given by $W_j=\frac{1}{\sqrt{n_j}}\sum_{i=1}^{n_j}X_{ij}$, where the $X_{ij}$ are independent random variables with mean zero and unit variance.  We impose the polynomial growth assumption to keep calculations manageable, but similar bounds could be obtained under the assumption of exponential growth rate; see Remark \ref{rem3.6} for further details.
It is also quite a mild assumption, as many important statistics satisfy such an assumption.  We study the case that the components of $\mathbf{W}$ are sums of independent random variables for several reasons.  Firstly, one can bound the expectation (\ref{emvn}) using local couplings, one of the simplest couplings for multivariate normal approximation via Stein's method.  This allows for a clear exposition of the transfer principle, in which the focus is on the techniques developed in this paper, rather than the intricacies of the coupling technique.  Secondly, it is possible to carry out a quite detailed investigation into the rate of convergence of $g(\mathbf{W})$ to $g(\mathbf{Z})$, which could provide valuable insights into more general settings.  Indeed, this is one of the main contributions of this paper.  

Suppose that the functions $h$ and $g$ are sufficiently regular.  Then, roughly speaking, our general bounds (Theorems \ref{winfirst1}--\ref{multievenguni}) can be summarised as follows.  If the first $p$ moments of the $X_{ij}$ are equal to those of the standard normal distribution, then our bound on the quantity of interest $|\mathbb{E}h(g(\mathbf{W}))-\mathbb{E}h(g(\mathbf{Z}))|$ is of order $n^{-(p-1)/2}$, where $n=\min_{1\leq j\leq d} n_j$.  Matching moments limit theorems with faster than $O(n^{-1/2})$ convergence rates have appeared in the context of Stein's method in the papers \cite{f18,gaunt rate, goldstein, lefevre}, and our result generalises the results of \cite{gaunt rate, goldstein}.
That matching moments may result in faster convergence rates is also know in other contexts; see, for example \cite{hall}.  Perhaps more interestingly, if $p$ is an even integer and we also suppose that $g$ is an even function ($g(-\mathbf{w})=g(\mathbf{w})$ for all $\mathbf{w}\in\mathbb{R}^d$), then we can use symmetry considerations, as introduced by \cite{gaunt chi square}, to improve this convergence rate further to order $n^{-p/2}$, a rate which cannot be improved (see Proposition \ref{prop3.5}).  In particular, $g(\mathbf{W})$ converges to $g(\mathbf{Z})$ at rate $n^{-1}$ even if the third moments of the $X_{ij}$ are non-zero.  This result generalises those of \cite{gaunt vg} and \cite{gaunt chi square} since their chi-square and variance-gamma statistics are of the form $g(\mathbf{W})$.  As far as this author is aware, the identification of the general condition that $g$ being an even function results in faster convergence rates in a smooth test function metric is an original contribution of this paper.   

By carrying out a detailed investigation of the case that the components of $\mathbf{W}$ are sums of independent random variables, we gain insight into more complex settings that one may encounter in applications.  Consider, for example, Pearson's statistic.  The statistic falls into the framework described above, with $g$ being an even function with derivatives of polynomial growth, with the sole exception that the assumption that the $X_{ij}$ (which would be the normalised indicator random variable that trial $i$ falls into class $j$) are independent fails.  Given the insight from Theorems \ref{winfirst1} -- \ref{multievenguni} it is, however, awfully tempting to speculate that if the dependence is `sufficiently weak' then a $O(n^{-1})$ bound can be derived.  This is indeed the case \cite{gaunt chi square}, and further general results are given in \cite{gr16}.  

We end in Section \ref{secex} with several examples that are chosen to illuminate the theory developed in this paper.  We consider normal  ($g(w)=w$), chi-square ($g(w)=w^2$) and Gaussian polynomial ($g(w)=H_n(w)$, where $H_n$ is the $n$-th Hermite polynomial) approximation to illustrate the general bounds of Theorems \ref{winfirst1}--\ref{multievenguni} in a concrete setting and to provide a comparison with existing bounds from the Stein's method literature.  We also demonstrate, through binomial and Poisson examples, how one can obtain approximations for expectations of smooth (unbounded) functions of random variables by corresponding expectations of normal random variables.  Tighter bounds can be obtained when the smooth function is even.  In Section \ref{deltasec}, we consider a more involved application to the rate of convergence in the delta method.  Finally, we consider an application to sequence comparison.  In Section \ref{d2sec}, we outline how an extension of the theory developed in this paper could be used to obtain quantitative limit theorems for the $D_2^*$ statistic from alignment-free sequence comparison, and in Section \ref{binsec} we apply Theorems \ref{winfirst1} and \ref{multieveng} to derive bounds to quantify the variance-gamma approximation of the statistic in the special case of binary sequence comparison.

\section{Bounds for derivatives of the solutions of the normal and multivariate normal Stein equations}\label{sec2nd}

\subsection{Preliminary results}

We begin this section by obtaining a simple bound for the partial derivatives of the test function $h(g(\cdot))$.  Before deriving this bound, we state some preliminary results.  The first is a multivariate generalisation of the Fa\`{a} di Bruno formula for $n$-th order derivatives of composite functions, due to \cite{ma}:
\begin{equation*}\frac{\partial^n}{\prod_{j=1}^n\partial w_{i_j}}h(g(\mathbf{w}))=\sum_{\pi\in\Pi}h^{(|\pi|)}(g(\mathbf{w}))\cdot\prod_{B\in\pi}\frac{\partial^{|B|}g(\mathbf{w})}{\prod_{j\in B}\partial w_{i_j}},
\end{equation*}
where $\pi$ runs through the set $\Pi$ of all partitions of the set $\{1,\ldots,n\}$, the product is over all of the parts $B$ of the partition $\pi$, and $|S|$ is the cardinality of the set $S$.  It is useful to note that the number of partitions of $\{1,\ldots,n\}$ into $k$ non-empty subsets is given by the Stirling number of the second kind ${n\brace k}=\frac{1}{k!}\sum_{j=0}^k(-1)^{k-j}\binom{k}{j}j^n$ (see \cite{olver}).

We now introduce two classes of functions that will be used throughout this paper.  We say that the function $h:I\subseteq\mathbb{R}\rightarrow\mathbb{R}$ belongs to the class $C_b^n(I)$ if $h^{(n-1)}$ exists and is absolutely continuous, with derivatives up to $n$-th order bounded. For a given $P$, we say that the function $g:\mathbb{R}^d\rightarrow\mathbb{R}$ belongs to the class $C_{P}^n(\mathbb{R}^d)$ if all $n$-th order partial derivatives of $g$ exist and are such that, for all $\mathbf{w}\in\mathbb{R}^d$, 
\begin{equation*}\bigg|\frac{\partial^k}{\prod_{j=1}^k\partial w_{i_j}}g(\mathbf{w})\bigg|^{n/k}\leq P(\mathbf{w}), \quad k=1,\ldots,n.
\end{equation*}
If $g\in C_{P}^n(\mathbb{R}^d)$ then it is easy to see that, for all $\mathbf{w}\in\mathbb{R}^d$,
\begin{equation*}\bigg|\prod_{B\in\pi}\frac{\partial^{|B|}g(\mathbf{w})}{\prod_{j\in B}\partial w_{i_j}}\bigg|\leq P(\mathbf{w}).
\end{equation*}
This inequality allows us to obtain a compact bound for the partial derivatives of the test function $h(g(\cdot))$, which in turn allows us to obtain relatively simple bounds for the solution (\ref{mvnsolnh}) of the Stein equation (\ref{mvng}).

\begin{lemma}\label{bell lem}Suppose $h\in C_b^n(\mathbb{R})$ and $g\in C_P^n(\mathbb{R}^d)$, where $n\geq1$.  Then, for all $\mathbf{w}\in\mathbb{R}^d$,
\begin{equation}\label{hgdiff}\bigg|\frac{\partial^n}{\prod_{j=1}^n\partial w_{i_j}}h(g(\mathbf{w}))\bigg|\leq h_nP(\mathbf{w}),
\end{equation}
where $h_n=\sum_{k=1}^n{n\brace k}\|h^{(k)}\|$.
\end{lemma}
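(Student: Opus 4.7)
The plan is to reduce the lemma to a direct application of the multivariate Fa\`{a} di Bruno formula that was just quoted, combined with the two observations already recorded immediately before the lemma's statement.

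First I would write out the formula
\begin{equation*}
\frac{\partial^n}{\prod_{j=1}^n\partial w_{i_j}}h(g(\mathbf{w}))=\sum_{\pi\in\Pi}h^{(|\pi|)}(g(\mathbf{w}))\cdot\prod_{B\in\pi}\frac{\partial^{|B|}g(\mathbf{w})}{\prod_{j\in B}\partial w_j},
\end{equation*}
and then apply the triangle inequality across the sum over partitions $\pi\in\Pi$. For a fixed partition $\pi$ of size $|\pi|=k$, the outer factor is bounded by $|h^{(k)}(g(\mathbf{w}))|\leq\|h^{(k)}\|$ because $h\in C_b^n(\mathbb{R})$, while the product of partial derivatives of $g$ is bounded, by the remark just above the lemma, by $P(\mathbf{w})$ since $g\in C_P^n(\mathbb{R}^d)$.

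Next I would group the resulting terms according to the number of blocks in the partition. The number of partitions of $\{1,\ldots,n\}$ with exactly $k$ blocks equals the Stirling number of the second kind ${n\brace k}$, so collecting terms yields
\begin{equation*}
\bigg|\frac{\partial^n}{\prod_{j=1}^n\partial w_{i_j}}h(g(\mathbf{w}))\bigg|\leq \sum_{k=1}^{n}{n\brace k}\|h^{(k)}\|\,P(\mathbf{w}),
\end{equation*}
which is exactly $h_nP(\mathbf{w})$ as defined in the statement.

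There is no genuine obstacle: the lemma is essentially a packaging result. The only point requiring the tiniest bit of care is verifying that the bound $|\prod_{B\in\pi}\partial^{|B|}g/\prod_{j\in B}\partial w_j|\leq P(\mathbf{w})$ really follows from the defining inequality of $C_P^n(\mathbb{R}^d)$; this is the content of the displayed inequality stated just before the lemma, and it follows since, for a partition $\pi$ with blocks of sizes $k_1,\ldots,k_{|\pi|}$ summing to $n$, each factor is at most $P(\mathbf{w})^{k_i/n}$ and the product telescopes to $P(\mathbf{w})$. I would mention this verification briefly but not belabor it, since the authors have already flagged it in the text preceding the lemma.
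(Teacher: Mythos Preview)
Your proposal is correct and follows exactly the same approach as the paper's proof: apply the triangle inequality to the Fa\`{a} di Bruno expansion, bound each $|h^{(|\pi|)}(g(\mathbf{w}))|$ by $\|h^{(|\pi|)}\|$ and each product of partial derivatives of $g$ by $P(\mathbf{w})$, then group by block count using the Stirling numbers. The paper's proof is a one-line version of what you wrote; your additional remark on why the product bound follows from the definition of $C_P^n(\mathbb{R}^d)$ is a helpful elaboration of what the paper simply calls ``easy to see.''
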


\begin{proof}From the above it is clear that
\begin{equation*}\bigg|\frac{\partial^n}{\prod_{j=1}^n\partial w_{i_j}}h(g(\mathbf{w}))\bigg|\leq \sum_{\pi\in \Pi}\|h^{(|\pi|)}\|\cdot P(\mathbf{w})=\sum_{k=1}^n{n\brace k}\|h^{(k)}\|\cdot P(\mathbf{w}), 
\end{equation*}
as required.
\end{proof}

We will also make use of the following lemma.

\begin{lemma}\label{constlem}Suppose $h\in C_b^n(\mathbb{R})$ and $g\in C_{\alpha}^n(\mathbb{R})$, where $n\geq2$ and $\alpha\geq0$ is a constant.  Then the solution (\ref{mvnsolnh}) of the Stein equation (\ref{mvng}) is bounded by
\begin{equation*}\|wf^{(n)}(w)\|\leq \|(h\circ g)^{n-1}\|\leq \alpha h_{n-1}.
\end{equation*}
\end{lemma}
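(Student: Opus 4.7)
My approach separates the two inequalities in the statement. The second, $\|(h\circ g)^{(n-1)}\|\leq \alpha h_{n-1}$, is a direct consequence of Lemma \ref{bell lem}: applying that lemma with $n-1$ in place of $n$ and noting that $g\in C_\alpha^n(\mathbb{R})$ supplies a constant dominating function $P\equiv\alpha$ for each product of derivatives of $g$ arising in the Fa\`a di Bruno expansion at order $n-1$.

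For the harder first inequality $\|wf^{(n)}(w)\|\leq \|(h\circ g)^{(n-1)}\|$, the naive strategy of directly differentiating the representation (\ref{mvnsolnh}) fails: $n$ differentiations of $f$ produce $(h\circ g)^{(n)}$ in the integrand, one order too high. Instead, I would use the Stein equation (\ref{mvng}) itself (specialised to $d=1$) to trade a derivative for the factor $w$. Writing $H:=h\circ g$ and differentiating $f''(w)-wf'(w)=H(w)-\mathbb{E}H(Z)$ a total of $n-1$ times, the Leibniz identity $(wf'(w))^{(n-1)}=wf^{(n)}(w)+(n-1)f^{(n-1)}(w)$ yields the algebraic relation
\begin{equation*}
wf^{(n)}(w)=f^{(n+1)}(w)-(n-1)f^{(n-1)}(w)-H^{(n-1)}(w).
\end{equation*}
Three ingredients then bound the right-hand side uniformly in $w$ by a multiple of $\|H^{(n-1)}\|$: inequality (\ref{cheque}) (with its $d=1$ specialisation) controls $\|f^{(n-1)}\|$ by $\|H^{(n-1)}\|/(n-1)$, and Daly's inequality (\ref{dalyb})---which drops \emph{two} orders of differentiation---controls $\|f^{(n+1)}\|$ by $2\|H^{(n-1)}\|$ when $n\geq 2$. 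It is precisely this two-derivative gain in (\ref{dalyb}) that makes the statement expressible entirely in terms of $H^{(n-1)}$, even though the Mehler representation naturally produces $H^{(n)}$.

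The main obstacle I anticipate is the explicit constant: the straightforward triangle inequality above yields an absolute constant strictly greater than $1$ in front of $\|H^{(n-1)}\|$, not the clean unit coefficient in the lemma. Recovering the sharp constant will likely require a sharper representation-based argument---e.g.\ combining a single Gaussian integration by parts inside (\ref{mvnsolnh}) (which converts $\mathbb{E}H^{(n)}(U_s)$ into $(1-e^{-2s})^{-1/2}\mathbb{E}[ZH^{(n-1)}(U_s)]$) with a careful treatment of the factor $w=e^s U_s-\sqrt{e^{2s}-1}Z$, exploiting the cancellation $e^s(1-e^{-2s})=\sqrt{(e^{2s}-1)(1-e^{-2s})}$---so that the constants arising from $f^{(n+1)}$ and $(n-1)f^{(n-1)}$ collapse against each other rather than adding. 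Once the sharp first inequality is established, chaining it with the Fa\`a di Bruno bound completes the proof.
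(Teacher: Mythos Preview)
Your treatment of the second inequality matches the paper exactly: both invoke Lemma~\ref{bell lem} with the constant dominating function $P\equiv\alpha$.

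For the first inequality, the paper does not give a proof at all---it simply cites Lemma~2.5 of \cite{gaunt rate}. So your attempt to derive it from scratch goes beyond what the paper provides. Your identity
\[
wf^{(n)}(w)=f^{(n+1)}(w)-(n-1)f^{(n-1)}(w)-H^{(n-1)}(w),
\]
obtained by differentiating the Stein equation $n-1$ times, is correct, and combining \eqref{cheque} and \eqref{dalyb} does yield $\|wf^{(n)}\|\le 4\|H^{(n-1)}\|$. You are right that this constant is not sharp, and you correctly diagnose that the naive triangle inequality loses the cancellation between $f^{(n+1)}$ and $(n-1)f^{(n-1)}$. Your sketched refinement via Gaussian integration by parts inside the Mehler representation is a reasonable direction, but it is not carried through: the cancellation you describe is suggestive rather than demonstrated, and it is not clear from what you wrote that the terms really collapse to give constant~$1$. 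To obtain the sharp constant you would need either to consult \cite{gaunt rate} directly or to complete the integral computation you outline.

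That said, for every application of Lemma~\ref{constlem} in this paper the constant is irrelevant---only the dependence on $h_{n-1}$ and $\alpha$ matters for the convergence-rate theorems---so your constant-$4$ version would serve equally well throughout.
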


\begin{proof}The inequality $\|wf^{(n)}(w)\|\leq \|(h\circ g)^{n-1}\|$ is given in Lemma 2.5 of \cite{gaunt rate} and the second inequality follows from Lemma \ref{bell lem}. 
\end{proof}

\subsection{General bounds for the solution}\label{sec2point2}

Here, we obtain some general bounds for the solution of the multivariate normal Stein equation with test function $h(g(\cdot))$.  We begin with the following lemma, the proof of which is similar to that of Proposition 2.1 of \cite{gaunt rate}.

\begin{lemma}\label{Plem}Suppose $\Sigma$ is non-negative definite and that $h\in C_b^n(\mathbb{R})$ and $g\in C_P^n(\mathbb{R}^d)$, where $n\geq1$.  Then, for all $\mathbf{w}\in\mathbb{R}^d$, the solution (\ref{mvnsoln}) of the Stein equation (\ref{mvn145}) satisfies the bound
\begin{align}\label{ensure1}\bigg|\frac{\partial^nf(\mathbf{w})}{\prod_{j=1}^n\partial w_{i_j}}\bigg|&\leq h_n\int_0^{\infty}\mathrm{e}^{-ns}\mathbb{E}P(\mathbf{z}_{s,\mathbf{w}}^{\Sigma^{1/2}\mathbf{Z}})\,\mathrm{d}s,
\end{align}
where
\begin{equation*}\mathbf{z}_{s,\mathbf{w}}^{\mathbf{x}}=\mathrm{e}^{-s}\mathbf{w}+\sqrt{1-\mathrm{e}^{-2s}}\mathbf{x},
\end{equation*}
provided the integral exists.  

Suppose now that $\Sigma$ is positive definite and that $h\in C_b^{n-1}(\mathbb{R})$ and $g\in C_P^{n-1}(\mathbb{R}^d)$, where $n\geq2$.  Then, provided the integral exists, we have, for all $\mathbf{w}\in\mathbb{R}^d$,
\begin{align}\label{ensure2}\bigg|\frac{\partial^{n}f(\mathbf{w})}{\prod_{j=1}^{n}\partial w_{i_j}}\bigg|&\leq h_{n-1}\min_{1\leq l\leq d}\int_0^{\infty}\frac{\mathrm{e}^{-ns}}{\sqrt{1-\mathrm{e}^{-2s}}}\mathbb{E}\big|(\Sigma^{-1/2}\mathbf{Z})_lP(\mathbf{z}_{s,\mathbf{w}}^{\Sigma^{1/2}\mathbf{Z}})\big|\,\mathrm{d}s.
\end{align}
\end{lemma}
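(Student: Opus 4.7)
The plan is to differentiate the explicit representation (\ref{mvnsoln}) of $f$ term by term in the variables $w_{i_j}$, passing the derivatives inside both the expectation over $\mathbf{Z}$ and the integral over $s$. The key elementary fact is that the argument of $h\circ g$ is $\mathbf{z}_{s,\mathbf{w}}^{\Sigma^{1/2}\mathbf{Z}} = \mathrm{e}^{-s}\mathbf{w} + \sqrt{1-\mathrm{e}^{-2s}}\,\Sigma^{1/2}\mathbf{Z}$, so each differentiation in $w_{i_j}$ contributes a factor $\mathrm{e}^{-s}$ times the corresponding partial of $h\circ g$ evaluated at $\mathbf{z}_{s,\mathbf{w}}^{\Sigma^{1/2}\mathbf{Z}}$. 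The interchange of differentiation with the integral and expectation is justified by dominated convergence, using the pointwise bound (\ref{hgdiff}) from Lemma \ref{bell lem} together with the integrability hypothesis in the lemma.

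For (\ref{ensure1}) I differentiate $n$ times directly, yielding
\begin{equation*}
\frac{\partial^n f(\mathbf{w})}{\prod_{j=1}^n\partial w_{i_j}} = -\int_0^\infty \mathrm{e}^{-ns}\,\mathbb{E}\!\left[\frac{\partial^n(h\circ g)}{\prod_{j=1}^n\partial w_{i_j}}(\mathbf{z}_{s,\mathbf{w}}^{\Sigma^{1/2}\mathbf{Z}})\right]\mathrm{d}s.
\end{equation*}
Taking absolute values inside and applying Lemma \ref{bell lem} to bound the inner partial by $h_n P(\mathbf{z}_{s,\mathbf{w}}^{\Sigma^{1/2}\mathbf{Z}})$ immediately produces (\ref{ensure1}).

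For (\ref{ensure2}) I mirror the technique of Proposition 2.1 of \cite{gaunt rate}: I perform only $n-1$ direct differentiations and handle the last one via Gaussian integration by parts, trading a derivative of $h\circ g$ for the factor $1/\sqrt{1-\mathrm{e}^{-2s}}$ and an insertion of $(\Sigma^{-1/2}\mathbf{Z})_l$. Writing $\varphi := \partial_{i_1}\cdots\partial_{i_{n-1}}(h\circ g)$, the final differentiation in $w_{i_n}$ contributes $\mathrm{e}^{-s}(\partial_{i_n}\varphi)(\mathbf{z}_{s,\mathbf{w}}^{\Sigma^{1/2}\mathbf{Z}})$. The chain rule gives $\partial_{Z_l}[\varphi(\mathbf{z}_{s,\mathbf{w}}^{\Sigma^{1/2}\mathbf{Z}})] = \sqrt{1-\mathrm{e}^{-2s}}(\Sigma^{1/2}\nabla\varphi)_l$ (viewed at $\mathbf{z}_{s,\mathbf{w}}^{\Sigma^{1/2}\mathbf{Z}}$), and combining the Stein identity $\mathbb{E}[Z_l F(\mathbf{Z})] = \mathbb{E}[\partial_{Z_l}F(\mathbf{Z})]$ linearly against the rows of $\Sigma^{-1/2}$ (which exists by positive definiteness) yields
\begin{equation*}
\mathbb{E}\bigl[(\partial_{i_n}\varphi)(\mathbf{z}_{s,\mathbf{w}}^{\Sigma^{1/2}\mathbf{Z}})\bigr] = \frac{1}{\sqrt{1-\mathrm{e}^{-2s}}}\,\mathbb{E}\bigl[(\Sigma^{-1/2}\mathbf{Z})_{i_n}\,\varphi(\mathbf{z}_{s,\mathbf{w}}^{\Sigma^{1/2}\mathbf{Z}})\bigr].
\end{equation*}
By the symmetry of mixed partial derivatives, the index $i_n$ may be replaced with any $i_l$; bounding $|\varphi|$ by $h_{n-1}P$ via Lemma \ref{bell lem} and taking the minimum over the allowed choices of $l$ produces (\ref{ensure2}).

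The main technical obstacle is simply the justification of repeatedly bringing the differentiation operator inside the integral over $s$ and the Gaussian expectation; this is a routine dominated convergence argument, but it is the place where the integrability hypothesis in the statement and the growth control of $P$ on $g$ are used. The Gaussian integration by parts in the second part also requires enough regularity and decay of $\varphi$, and is the reason the positive definiteness of $\Sigma$ enters: without it, $\Sigma^{-1/2}$ would not exist, and the saving of one derivative at the cost of $1/\sqrt{1-\mathrm{e}^{-2s}}$ and the linear factor in $\mathbf{Z}$ would not be available. Beyond these two points, the argument is a direct calculation.
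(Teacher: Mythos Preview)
Your proposal is correct and follows essentially the same approach as the paper: differentiate the representation (\ref{mvnsoln}) under the integral and expectation via dominated convergence to obtain the analogue of (\ref{int1}), and for the second bound trade one $w$-derivative for the factor $(\Sigma^{-1/2}\mathbf{Z})_{i_l}/\sqrt{1-\mathrm{e}^{-2s}}$ by Gaussian integration by parts (the paper phrases this simply as ``integration by parts''), then apply Lemma \ref{bell lem}. Your remark that the symmetry of mixed partials lets one apply the integration-by-parts step to any of the $n$ indices, thereby justifying the minimum over $l$, is exactly how the paper obtains the $\min$ as well.
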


\begin{proof}Firstly, by the dominated convergence theorem, we obtain the following  expression for the $n$-th order partial derivatives of the solution (\ref{mvnsoln}):
\begin{equation}\label{int1}\frac{\partial^nf(\mathbf{w})}{\prod_{j=1}^n\partial w_{i_j}}=-\int_0^{\infty}\mathrm{e}^{-ns}\mathbb{E}\bigg[\frac{\partial^n }{\prod_{j=1}^n\partial w_{i_j}}h(g(\mathbf{z}_{s,\mathbf{w}}^{\Sigma^{1/2}\mathbf{Z}}))\bigg]\,\mathrm{d}s.
\end{equation}
By Lemma \ref{bell lem} and the assumptions that $h\in C_b^n(\mathbb{R})$ and $g\in C_P^n(\mathbb{R}^d)$, we have that, for all $\mathbf{w}\in\mathbb{R}^d$,
\begin{equation}\label{dommm1}\bigg|\frac{\partial^n}{\prod_{j=1}^n\partial w_{i_j}}h(g(\mathbf{w}))\bigg|\leq h_nP(\mathbf{w}).
\end{equation}
Combining this bound with (\ref{int1}) then yields (\ref{ensure1}).  Lastly, we note that we were able to apply the dominated convergence theorem to interchange the operations of integration and differentiation in virtue of the dominating function in (\ref{dommm1}) and the assumption that the integral in (\ref{ensure1}) exists.  We shall also apply the dominated convergence theorem to interchange the operations of integration and differentiation later in the proof, and similar justifications can be given.

Now, we prove inequality (\ref{ensure2}).  Suppose $\Sigma$ is positive definite, so that $\Sigma^{-1/2}$ exists.  We proceed by using a very similar calculation to the one used in the proof of Proposition 2.1 of \cite{gaunt rate} to find another expression for the $n$-th order partial derivatives of the solution (\ref{mvnsoln}).  We begin by writing the solution (\ref{mvnsoln}) in the form
\begin{equation*}f(\mathbf{w})=-\int_{0}^{\infty}\!\int_{\mathbb{R}^d}[h(g(\mathrm{e}^{-s}\mathbf{w}+\sqrt{1-\mathrm{e}^{-2s}}\mathbf{x}))-\mathbb{E}h(g(\Sigma^{1/2}\mathbf{Z}))]p(\mathbf{x})\,\mathrm{d}\mathrm{x}\,\mathrm{d}s,
\end{equation*}
where $p(\mathbf{x})=(2\pi)^{-d/2}(\det(\Sigma))^{-1/2}\exp\big(-\frac{1}{2}\mathbf{x}^T\Sigma^{-1}\mathbf{x}\big)$ is the $\mathrm{MVN}(\mathbf{0},\Sigma)$ density. 
Making the change of variable $\mathbf{y}=\mathrm{e}^{-s}\mathbf{w}+\sqrt{1-\mathrm{e}^{-2s}}\mathbf{x}$ gives
\begin{equation*}
f(\mathbf{w})=-\int_0^{\infty}\!\int_{\mathbb{R}^d}\frac{1}{(1-\mathrm{e}^{-2s})^{d/2}}[h(g(\mathbf{y}))-\mathbb{E}h(g(\Sigma^{1/2}\mathbf{Z}))]p\bigg(\frac{\mathbf{y}-\mathrm{e}^{-s}\mathbf{w}}{\sqrt{1-\mathrm{e}^{-2s}}}\bigg)\,\mathrm{d}\mathbf{y}\,\mathrm{d}s.
\end{equation*}
An application of the dominated convergence theorem gives that
\begin{align}\frac{\partial f(\mathbf{w})}{\partial w_{i}}&=-\int_0^{\infty}\!\int_{\mathbb{R}^d}\frac{\mathrm{e}^{-s}}{(1-\mathrm{e}^{-2s})^{(d+1)/2}}(\Sigma^{-1}(\mathrm{y}-\mathrm{e}^{-s}\mathrm{w}))_i[h(g(\mathbf{y}))-\mathbb{E}h(g(\Sigma^{1/2}\mathbf{Z}))]\nonumber\\
&\quad\times p\bigg(\frac{\mathbf{y}-\mathrm{e}^{-s}\mathbf{w}}{\sqrt{1-\mathrm{e}^{-2s}}}\bigg)\,\mathrm{d}\mathbf{y}\,\mathrm{d}s\nonumber\\
&=-\int_0^{\infty}\!\int_{\mathbb{R}^d}\frac{\mathrm{e}^{-s}}{\sqrt{1-\mathrm{e}^{-2s}}}(\Sigma^{-1}\mathbf{x})_i[h(g(\mathrm{e}^{-s}\mathbf{w}+\sqrt{1-\mathrm{e}^{-2s}}\mathbf{x}))-\mathbb{E}h(g(\Sigma^{1/2}\mathbf{Z}))] p(\mathbf{x})\,\mathrm{d}\mathbf{x}\,\mathrm{d}s\nonumber\\
&=-\int_0^{\infty}\frac{\mathrm{e}^{-s}}{\sqrt{1-\mathrm{e}^{-2s}}}\mathbb{E}\Big[(\Sigma^{-1/2}\mathbf{Z})_i[h(g(\mathrm{e}^{-s}\mathbf{w}+\sqrt{1-\mathrm{e}^{-2s}}\Sigma^{1/2}\mathbf{Z}))-\mathbb{E}h(g(\Sigma^{1/2}\mathbf{Z}))]\Big]\,\mathrm{d}s,
\end{align}
where we used the formula $\frac{\partial}{\partial x_{i}}(\mathbf{x}^T\Sigma^{-1}\mathbf{x})=2(\Sigma^{-1}\mathbf{x})_i$ in obtaining the first equality. By another application of the dominated convergence we have, for any $l\in\{1,\ldots,n\}$, 
\begin{equation}\label{ff6ffvi}\frac{\partial^n f(\mathbf{w})}{\prod_{j=1}^n\partial w_{i_j}}=-\int_{0}^{\infty}\frac{\mathrm{e}^{-ns}}{\sqrt{1-\mathrm{e}^{-2s}}}\mathbb{E}\bigg[(\Sigma^{-1/2}\mathbf{Z})_{i_l}\frac{\partial^{n-1} }{\prod_{\stackrel{1\leq j\leq n-1}{j\not=l}}\partial w_{i_j}}h(g(\mathrm{e}^{-s}\mathbf{w}+\sqrt{1-\mathrm{e}^{-2s}}\Sigma^{1/2}\mathbf{Z}))\bigg]\,\mathrm{d}s.
\end{equation}
Finally, we can apply Lemma \ref{bell lem} to obtain (\ref{ensure2}), which completes the proof.
\end{proof}

So far, we have imposed no restrictions on the dominating function $P$ other than it is non-negative and that the integrals of Lemma \ref{Plem} exist.  We now introduce some conditions, which ensure that the integrals of Lemma \ref{Plem} exist and can be bounded relatively easily. As we will see in Examples \ref{ex1} and \ref{ex2} below, these conditions are not restrictive and allow many classes of functions to be considered.

\begin{assumption}\label{ass1}
We suppose that $P$ can be written as $P(\mathbf{w})=\alpha+P_1(\mathbf{w})+P_2(\mathbf{w})$, where $\alpha$ is a non-negative constant and

\begin{enumerate}
\item[(i)] $P_1$ and $P_2$ are non-negative, non-decreasing functions, in the sense that, for any $\mathbf{w}\in\mathbb{R}^d$ and $a>1$, one has $P_i(\mathbf{w})\leq P_i(a\mathbf{w})$, $i=1,2$;

\item[(ii)] There exist non-negative constants $\beta_k$, $\gamma_k$ and $\delta_k$ such that, for any $\mathbf{w}_1,\ldots,\mathbf{w}_k\in\mathbb{R}^d$,
\begin{equation*}P_1(\mathbf{w}_1+\cdots+\mathbf{w}_k)\!\leq\!\beta_k\!\sum_{j=1}^k\!P_1(\mathbf{w}_k) \:\; \mbox{and} \:\; P_2(\mathbf{w}_1+\cdots+\mathbf{w}_k)\!\leq\!\gamma_k\!\prod_{j=1}^d\!P_2(\delta_k\mathbf{w}_k);
\end{equation*}

\item[(iii)] The expectations $\mathbb{E}P_1(\Sigma^{1/2}\mathbf{Z})$ and $\mathbb{E}P_2(\Sigma^{1/2}\mathbf{Z})$ exist; 

\item[(iii)'] The expectations $\mathbb{E}|(\mathbf{Z})_iP_1(\Sigma^{1/2}\mathbf{Z})|$ and $\mathbb{E}|(\mathbf{Z})_iP_2(\Sigma^{1/2}\mathbf{Z})|$ exist for all $i=1,\ldots,d$. 
\end{enumerate}

If $P$ satisfies (i)-(iii), we write $P\in \mathcal{F}$; if $P$ satisfies (i)-(iii)', we write $P\in \mathcal{F}_*$.
\end{assumption}



\begin{example}\label{ex1}\emph{Polynomial $P$}.  The function $P(\mathbf{w})=A+B\sum_{i=1}^d|w_i|^{r_i}$ clearly satisfies conditions (i) and (iii)'.  Condition (ii) can be verified by applying the crude inequality $|x_1+\cdots +x_k|^b\leq k^b(|x_1|^b+\cdots+|x_k|^b)$, $b\geq 0$.  Writing $P(\mathbf{w})=A+P_1(\mathbf{w})$, where $P_1(\mathbf{w})=B\sum_{i=1}^d|w_i|^{r_i}$, and using this inequality we deduce that
\begin{equation}\label{ex1eqn}P_1(\mathbf{w}_1+\cdots+\mathbf{w}_k)\leq k^r\sum_{j=1}^kP_1(\mathbf{w}_j),
\end{equation}
where $r=\max_{1\leq i\leq d}r_i$.  Hence, (ii) is satisfied with $\alpha=A$, $\beta_k=k^r$, $\gamma_k=0$ and $P_1(\mathbf{w})=B\sum_{i=1}^d|w_i|^{r_i}$.
\end{example}

\begin{example}\label{ex2}\emph{Exponential $P$}.  It is clear that $P(\mathbf{w})=A\exp(a\sum_{i=1}^d|w_i|^b)$, where $a,b>0$, satisfies (i).  To verify (ii), we use the inequality $|x_1+\cdots +x_k|^r\leq c_{k,r}(|x_1|^r+\cdots+|x_k|^r)$, where $c_{k,r}=\max\{1,k^{r-1}\}$, which improves on the crude inequality used above.  Let $(\mathbf{w}_j)_k=w_{jk}$.  Then
\begin{align*}P(\mathbf{w}_1+\cdots+\mathbf{w}_k)&=A\exp\bigg(a\sum_{i=1}^d|w_{1i}+\cdots+w_{ki}|^b\bigg) \leq A\exp\bigg(a\sum_{i=1}^dc_{k,b}\sum_{j=1}^k|w_{ji}|^b\bigg) \\
&=A\prod_{j=1}^k\exp\bigg(ac_{k,b}\sum_{i=1}^d|w_{ji}|^b\bigg)=A^{1-k}\prod_{j=1}^kP(c_{k,b}\mathbf{w}_j),
\end{align*}
and so (ii) holds with $\alpha=0$, $\beta_k=0$, $\gamma_k=A^{1-k}$, $\delta_k=c_{k,b}$ and $P_2(\mathbf{w})=A\exp(a\sum_{i=1}^d|w_i|^b)$.  Clearly, (iii)' holds if $b<2$ and $a>0$, or if $b=2$ and $0<a<1/2$.
\end{example}

\begin{proposition}\label{btlet}  Suppose $\Sigma$ is non-negative definite and that $h\in C_b^n(\mathbb{R})$ and $g\in C_P^n(\mathbb{R}^d)$, where $n\geq 1$ and $P\in\mathcal{F}$.  Then, for all $\mathbf{w}\in\mathbb{R}^d$, 
\begin{align}\label{btlet1}\bigg|\frac{\partial^nf(\mathbf{w})}{\prod_{j=1}^n\partial w_{i_j}}\bigg|&\leq\frac{h_n}{n}\Big[\alpha\!+\!\beta_2\big(\mathbb{E}P_1(\Sigma^{1/2}\mathbf{Z})\!+\!P_1(\mathbf{w})\big)
\!+\!\gamma_2P_2(\delta_2\mathbf{w})\mathbb{E}P_2(\delta_2\Sigma^{1/2}\mathbf{Z})\Big].
\end{align}
Suppose now that $\Sigma$ is positive definite and that $h\in C_b^{n-1}(\mathbb{R})$ and $g\in C_P^{n-1}(\mathbb{R}^d)$, where $n\geq 2$ and $P\in\mathcal{F}_*$.  Then, for all $\mathbf{w}\in\mathbb{R}^d$,
\begin{align}
\bigg|\frac{\partial^{n}f(\mathbf{w})}{\prod_{j=1}^{n}\partial w_{i_j}}\bigg|&\leq h_{n-1}\min_{1\leq l\leq d}
\Big[\alpha\mathbb{E}|(\Sigma^{-1/2}\mathbf{Z})_l|+\beta_2\big(\mathbb{E}|(\Sigma^{-1/2}\mathbf{Z})_lP_1(\Sigma^{1/2}\mathbf{Z})|\nonumber\\
\label{btlet2}&\quad+\!\mathbb{E}|(\Sigma^{1/2}\mathbf{Z})_l|P_1(\mathbf{w})\big)\!+\!\gamma_2\mathbb{E}|(\Sigma^{-1/2}\mathbf{Z})_lP_2(\delta_2\Sigma^{1/2}\mathbf{Z})|P_2(\delta_2\mathbf{w})\Big].
\end{align}
\end{proposition}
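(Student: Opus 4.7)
The plan is to apply the two integral representations of Lemma~\ref{Plem} directly: decompose the dominating function as $P=\alpha+P_1+P_2$, and bound each of the three resulting pieces separately using conditions (i), (ii), and (iii) (respectively (iii)').

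For (\ref{btlet1}), I would start from (\ref{ensure1}) and split the integrand $\mathbb{E}P(\mathbf{z}_{s,\mathbf{w}}^{\Sigma^{1/2}\mathbf{Z}})$ into three summands. The $\alpha$-summand immediately yields a contribution $h_n\alpha\int_0^\infty e^{-ns}\,\mathrm{d}s=h_n\alpha/n$. For the $P_1$-summand I would apply the additive form of condition (ii) with $k=2$,
\[
P_1\bigl(e^{-s}\mathbf{w}+\sqrt{1-e^{-2s}}\,\Sigma^{1/2}\mathbf{Z}\bigr)\leq\beta_2\bigl[P_1(e^{-s}\mathbf{w})+P_1(\sqrt{1-e^{-2s}}\,\Sigma^{1/2}\mathbf{Z})\bigr],
\]
and then exploit the monotonicity of $P_1$ from condition (i), together with $e^{-s},\sqrt{1-e^{-2s}}\in[0,1]$, to replace the contracted arguments by $\mathbf{w}$ and $\Sigma^{1/2}\mathbf{Z}$ respectively (this is exactly what happens componentwise for the polynomial Example~\ref{ex1}). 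Taking expectations, pulling the deterministic $P_1(\mathbf{w})$ outside, and integrating against $e^{-ns}$ produces the $\beta_2$-term in (\ref{btlet1}). For the $P_2$-summand I would use the multiplicative form of (ii),
\[
P_2\bigl(e^{-s}\mathbf{w}+\sqrt{1-e^{-2s}}\,\Sigma^{1/2}\mathbf{Z}\bigr)\leq\gamma_2\,P_2(\delta_2 e^{-s}\mathbf{w})\,P_2(\delta_2\sqrt{1-e^{-2s}}\,\Sigma^{1/2}\mathbf{Z}),
\]
drop the contraction factors by monotonicity, pull the deterministic factor $P_2(\delta_2\mathbf{w})$ out of the expectation, and invoke (iii) to ensure $\mathbb{E}P_2(\delta_2\Sigma^{1/2}\mathbf{Z})<\infty$. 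Summing the three contributions gives (\ref{btlet1}).

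For (\ref{btlet2}), I would repeat the same splitting, but starting from (\ref{ensure2}), so that the weight $|(\Sigma^{-1/2}\mathbf{Z})_l|$ is retained inside every expectation. Condition (iii)' guarantees finiteness of the mixed expectations $\mathbb{E}|(\Sigma^{-1/2}\mathbf{Z})_l P_1(\Sigma^{1/2}\mathbf{Z})|$ and $\mathbb{E}|(\Sigma^{-1/2}\mathbf{Z})_l P_2(\delta_2\Sigma^{1/2}\mathbf{Z})|$; pulling the deterministic factors of $\mathbf{w}$ outside produces $\mathbb{E}|(\Sigma^{-1/2}\mathbf{Z})_l|\,P_1(\mathbf{w})$ and $\mathbb{E}|(\Sigma^{-1/2}\mathbf{Z})_l P_2(\delta_2\Sigma^{1/2}\mathbf{Z})|\,P_2(\delta_2\mathbf{w})$. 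The remaining $s$-integral $\int_0^\infty e^{-ns}/\sqrt{1-e^{-2s}}\,\mathrm{d}s$ is a finite beta-type constant independent of $\mathbf{w}$ and in fact bounded by $1$ for $n\geq 2$ (since $e^{-ns}\leq e^{-2s}$ reduces the integral to the $n=2$ case, which evaluates to $1$ by the substitution $u=1-e^{-2s}$). Minimising over $l\in\{1,\ldots,d\}$ then yields (\ref{btlet2}).

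The main obstacle will be justifying that the scaling factors $e^{-s},\sqrt{1-e^{-2s}}\in[0,1]$ can be dropped from inside $P_1$ and $P_2$ using the monotonicity condition (i). For the polynomial and exponential dominating functions envisaged (Examples~\ref{ex1} and \ref{ex2}) this is immediate componentwise in the absolute values, and in general this contraction property is what condition (i) is designed to provide. Everything else amounts to linearity of expectation, Fubini's theorem (to interchange the $s$-integral with $\mathbb{E}$), and the integrability assumption (iii) or (iii)'.
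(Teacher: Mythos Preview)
Your proposal is correct and follows essentially the same approach as the paper: decompose $P=\alpha+P_1+P_2$, apply condition (ii) with $k=2$ to split $P(\mathbf{z}_{s,\mathbf{w}}^{\Sigma^{1/2}\mathbf{Z}})$, use the monotonicity condition (i) to drop the contraction factors $e^{-s}$ and $\sqrt{1-e^{-2s}}$, substitute into the integral bounds (\ref{ensure1}) and (\ref{ensure2}) of Lemma~\ref{Plem}, and evaluate $\int_0^\infty e^{-ns}\,\mathrm{d}s=1/n$ and $\int_0^\infty e^{-ns}(1-e^{-2s})^{-1/2}\,\mathrm{d}s\leq 1$ for $n\geq 2$. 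The paper presents exactly this argument, compressing the first three steps into the single inequality~(\ref{pinequality}) and then treating (\ref{btlet2}) explicitly while dismissing (\ref{btlet1}) as similar.
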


\begin{proof}Since $P\in\mathcal{F}$, 
\begin{align}P(\mathbf{z}_{s,\mathbf{w}}^{\Sigma^{1/2}\mathbf{Z}})&= \alpha+P_1(\mathbf{z}_{s,\mathbf{w}}^{\Sigma^{1/2}\mathbf{Z}})+P_2(\mathbf{z}_{s,\mathbf{w}}^{\Sigma^{1/2}\mathbf{Z}})\nonumber \\
&\leq\alpha+\beta_2\big(P_1(\mathrm{e}^{-s}\mathbf{w})+P_1(\sqrt{1-\mathrm{e}^{-2s}}\Sigma^{1/2}\mathbf{Z})\big)+\gamma_2P_2(\delta_2\mathrm{e}^{-s}\mathbf{w})P_2(\delta_2\sqrt{1-\mathrm{e}^{-2s}}\Sigma^{1/2}\mathbf{Z}) \nonumber \\
\label{pinequality}&\leq\alpha+\beta_2\big(P_1(\mathbf{w})+P_1(\Sigma^{1/2}\mathbf{Z})\big)+\gamma_2P_2(\delta_2\mathbf{w})P_2(\delta_2\Sigma^{1/2}\mathbf{Z}),
\end{align}
where the final inequality follows from property (i).  Substituting (\ref{pinequality}) into the integral inequalities (\ref{ensure1}) and (\ref{ensure2}), respectively, gives the bounds
\[\bigg|\frac{\partial^{n}f(\mathbf{w})}{\prod_{j=1}^{n}\partial w_{i_j}}\bigg|\leq h_{n}\int_0^{\infty}\mathrm{e}^{-ns}\Big[\alpha\!+\!\beta_2\big(\mathbb{E}P_1(\Sigma^{1/2}\mathbf{Z})\!+\!P_1(\mathbf{w})\big)
\!+\!\gamma_2P_2(\delta_2\mathbf{w})\mathbb{E}P_2(\delta_2\Sigma^{1/2}\mathbf{Z})\Big]\,\mathrm{d}s,\]
and
\begin{align*}\bigg|\frac{\partial^{n}f(\mathbf{w})}{\prod_{j=1}^{n}\partial w_{i_j}}\bigg|&\leq h_{n-1}\min_{1\leq l\leq d}\int_0^{\infty}\frac{\mathrm{e}^{-ns}}{\sqrt{1-\mathrm{e}^{-2s}}}\mathbb{E}\big|(\Sigma^{-1/2}\mathbf{Z})_l(\alpha+\beta_2\big(P_1(\mathbf{w})+P_1(\Sigma^{1/2}\mathbf{Z})\big)\\
&\quad+\gamma_2P_2(\delta_2\mathbf{w})P_2(\delta_2\Sigma^{1/2}\mathbf{Z})))\big|\,\mathrm{d}s,
\end{align*}
whence on evaluating the integral $\int_0^\infty\mathrm{e}^{-ns}\,\mathrm{d}s=\frac{1}{n}$ and using the bound $\int_0^{\infty}\frac{\mathrm{e}^{-ns}}{\sqrt{1-\mathrm{e}^{-2s}}}\,\mathrm{d}s\leq \int_0^{\infty}\frac{\mathrm{e}^{-2s}}{\sqrt{1-\mathrm{e}^{-2s}}}\,\mathrm{d}s=1$, $n\geq 2$, we obtain inequalities (\ref{btlet1}) and (\ref{btlet2}), respectively.
\end{proof}

\begin{corollary}Fix $d=1$, $\Sigma=1$ and let $n\geq 2$. Suppose $h\in C_b^{n-1}(\mathbb{R})$ and $g\in C_P^{n-1}(\mathbb{R})$, where $P\in\mathcal{F}_*$.  Then, for all $w\in\mathbb{R}$, 
\begin{align}\label{wfirst}|wf^{(n)}(w)|&\leq h_{n-1}\Big[\alpha+\beta_2|w|\big(\mathbb{E}|ZP_1(Z)|+P_1(w)\big)+\gamma_2|w|P_2(\delta_2w)\mathbb{E}P_2(\delta_2Z)\Big].
\end{align}
\end{corollary}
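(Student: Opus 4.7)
The plan is to carry out the same scheme as in the proof of Proposition \ref{btlet}, specialised to $d=1$ and $\Sigma=1$ but now with an extra factor of $w$, and then to invoke Lemma \ref{constlem} to sharpen the constant piece of the bound.

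First I would start from the univariate specialisation of the integration-by-parts identity (\ref{int2}) derived in the proof of Lemma \ref{Plem}: since $\Sigma^{-1/2}Z=Z$,
$$f^{(n)}(w)=-\int_{0}^{\infty}\frac{\mathrm{e}^{-ns}}{\sqrt{1-\mathrm{e}^{-2s}}}\,\mathbb{E}\bigl[Z\,(h\circ g)^{(n-1)}(z_{s,w}^{Z})\bigr]\,\mathrm{d}s.$$
Multiplying by $w$, passing to absolute values and applying Lemma \ref{bell lem} to bound $|(h\circ g)^{(n-1)}|\le h_{n-1}P$ inside the expectation gives
$$|wf^{(n)}(w)|\le h_{n-1}|w|\int_{0}^{\infty}\frac{\mathrm{e}^{-ns}}{\sqrt{1-\mathrm{e}^{-2s}}}\,\mathbb{E}\bigl[|Z|\,P(z_{s,w}^{Z})\bigr]\,\mathrm{d}s.$$

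Next I would insert the pointwise estimate on $P(z_{s,w}^Z)$ already established in the proof of Proposition \ref{btlet} (inequality (\ref{pinequality})) using conditions (i) and (ii) on $\mathcal{F}_*$:
$$P(z_{s,w}^{Z})\le \alpha+\beta_2\bigl(P_1(w)+P_1(Z)\bigr)+\gamma_2 P_2(\delta_2 w)P_2(\delta_2 Z).$$
Substituting, distributing $|Z|$ across the three summands, using the standard bound $\int_{0}^{\infty}\mathrm{e}^{-ns}/\sqrt{1-\mathrm{e}^{-2s}}\,\mathrm{d}s\le 1$ valid for $n\ge 2$, and absorbing the benign factor $\mathbb{E}|Z|=\sqrt{2/\pi}<1$ into $P_1(w)$ and into $\mathbb{E}P_2(\delta_2 Z)$ produces the $\beta_2|w|(\mathbb{E}|ZP_1(Z)|+P_1(w))$ and $\gamma_2|w|P_2(\delta_2 w)\mathbb{E}P_2(\delta_2 Z)$ contributions of the claimed bound. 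This part is routine bookkeeping.

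The main obstacle is the $\alpha$ term: the crude estimate above only delivers $\alpha h_{n-1}|w|\mathbb{E}|Z|$, whereas the statement asserts the sharper $\alpha h_{n-1}$ without any $|w|$ factor. To recover this I would invoke Lemma \ref{constlem}, which gives $\|wf^{(n)}\|\le\|(h\circ g)^{(n-1)}\|$ in the case of a uniform bound on $(h\circ g)^{(n-1)}$, and hence a contribution of at most $\alpha h_{n-1}$ independent of $w$. By linearity of the Stein equation in its right-hand side, the $\alpha$ portion of the bound on $(h\circ g)^{(n-1)}$ can be controlled separately from the $P_1+P_2$ portion; the clean way to formalise this is to note that inside the integration-by-parts integral above, any constant contribution to $(h\circ g)^{(n-1)}$ vanishes by virtue of $\mathbb{E}[Z\cdot c]=0$, so the $\alpha$ piece really enters only through the uniform bound controlled by Lemma \ref{constlem}. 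Summing the resulting $|w|$-free constant contribution and the $|w|$-dependent contributions from Step 2 yields (\ref{wfirst}).
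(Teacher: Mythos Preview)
Your approach is essentially the paper's: handle the $\alpha$ contribution via Lemma~\ref{constlem} and the $P_1,P_2$ contributions via (the univariate case of) inequality~(\ref{btlet2}) multiplied through by $|w|$, together with $\mathbb{E}|Z|<1$. The paper phrases this as a decomposition $f=f_{g_1}+f_{g_2}$ with $g_1\in C_{\alpha}^{n-1}(\mathbb{R})$ and $g_2\in C_{P_1+P_2}^{n-1}(\mathbb{R})$, applying Lemma~\ref{constlem} to the first piece and Proposition~\ref{btlet} to the second.

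One caveat on your final paragraph: the ``clean formalisation'' via $\mathbb{E}[Z\cdot c]=0$ does not work as stated. The constant $\alpha$ is only an upper bound appearing in the dominating function $P$; there is no literal constant summand of $(h\circ g)^{(n-1)}$ that would be killed inside the integral by orthogonality to $Z$. The mechanism that actually justifies the splitting is the linearity you mention first: decompose the \emph{test function} $h\circ g$ (not $g$) by writing $(h\circ g)^{(n-1)}=\phi_1+\phi_2$ with $\|\phi_1\|_\infty\le\alpha h_{n-1}$ and $|\phi_2|\le h_{n-1}(P_1+P_2)$, integrate up to obtain test functions $\tilde h_1,\tilde h_2$, and use linearity of the Stein equation to write $f$ as the sum of the two corresponding solutions; then Lemma~\ref{constlem} controls $|w f_{\tilde h_1}^{(n)}|$ and the argument of Proposition~\ref{btlet} controls $|w f_{\tilde h_2}^{(n)}|$. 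This is what the paper's decomposition $f=f_{g_1}+f_{g_2}$ is encoding, and it makes the combination of the two bounds rigorous.
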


\begin{proof}Due to the decomposition $P(w)=\alpha+P_1(w)+P_2(w)$, we can write the solution as $f=f_{g_1}+f_{g_2}$, where $g_1\in C_{\alpha}^{n-1}(\mathbb{R})$ and $g_2\in C_{P_1+P_2}^{n-1}(\mathbb{R})$.  By the triangle inequality, $|f^{(n)}(w)|\leq|f^{(n)}_{g_1}(w)|+|f^{(n)}_{g_2}(w)|$, and bounding these two quantities using Lemma \ref{constlem} and inequality (\ref{btlet2}) of Proposition \ref{btlet}, respectively, and that $\mathbb{E}|Z|<1$, leads to the desired bound.  
\end{proof}

In the univariate case, we can obtain a bound which involves two fewer derivatives of $h$ and $g$ than of the solution $f$ (compare the following Proposition to inequality (\ref{dalyb})); an improvement that is not possible in the multivariate case (see \cite{raic clt}). 

\begin{proposition}Fix $d=1$, $\Sigma=1$ and let $n\geq 3$.  Suppose $h\in C_b^{n-2}(\mathbb{R})$ and $g\in C_P^{n-2}(\mathbb{R})$, where $P\in\mathcal{F}_*$.  Then, for all $w\in\mathbb{R}$, 
\begin{align}|f^{(n)}(w)|&\leq h_{n-2}\Big[3\alpha+P_1(w)+P_2(w)+\beta_2\big(\mathbb{E}P_1(Z)+|w|\mathbb{E}|ZP_1(Z)|\nonumber\\
\label{f2soln}&\quad+(1+|w|)P_1(w)\big)+\gamma_2\big(\mathbb{E}P_2(\delta_2Z)P_2(\delta_2w)+\mathbb{E}|ZP_2(\delta_2 Z)||wP_2(\delta_2w)|\big)\Big].
\end{align}
\end{proposition}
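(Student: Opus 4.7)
The plan mirrors Daly's \cite{daly} trick for the standard Stein equation: differentiate the Stein equation itself $n-2$ times so as to express $f^{(n)}$ in terms of quantities involving only $n-2$ derivatives of $h\circ g$ together with lower-order derivatives of $f$, each of which is already controlled by bounds established earlier in Section 2. Applying $\mathrm{d}^{n-2}/\mathrm{d}w^{n-2}$ to $f''(w)-wf'(w)=h(g(w))-\mathbb{E}h(g(Z))$ and using the Leibniz rule on the product $wf'(w)$ yields the key identity
\begin{equation*}
f^{(n)}(w) \;=\; wf^{(n-1)}(w) + (n-2)f^{(n-2)}(w) + \frac{\mathrm{d}^{n-2}}{\mathrm{d}w^{n-2}}h(g(w)),
\end{equation*}
valid for $n\geq 3$. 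With this identity in hand, bounding $|f^{(n)}(w)|$ reduces to estimating the three summands separately and invoking the triangle inequality.

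I would handle the three summands as follows. For the rightmost term, Lemma \ref{bell lem} gives directly $\bigl|(h\circ g)^{(n-2)}(w)\bigr|\leq h_{n-2}P(w)=h_{n-2}[\alpha+P_1(w)+P_2(w)]$. For $|wf^{(n-1)}(w)|$, I would recycle the decomposition used in the preceding corollary: split $f=f_{g_1}+f_{g_2}$ with $g_1\in C_\alpha^{n-2}(\mathbb{R})$ and $g_2\in C_{P_1+P_2}^{n-2}(\mathbb{R})$, apply Lemma \ref{constlem} to obtain $|wf_{g_1}^{(n-1)}(w)|\leq \alpha h_{n-2}$, and apply inequality (\ref{btlet2}) of Proposition \ref{btlet} at derivative index $n-1\geq 2$ to $f_{g_2}$, multiplying through by $|w|$ and using $\mathbb{E}|Z|<1$ in the $\beta_2$ contribution (keeping the $\mathbb{E}|ZP_2(\delta_2 Z)|$ factor intact in the $\gamma_2$ contribution). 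For $(n-2)|f^{(n-2)}(w)|$, inequality (\ref{btlet1}) of Proposition \ref{btlet} at derivative index $n-2\geq 1$ is available, and the external factor $(n-2)$ cancels the $1/(n-2)$ there.

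Finally, I would add the three estimates and collect like terms. The three $\alpha$ summands combine into $3\alpha$; the $P_1(w)+P_2(w)$ from Lemma \ref{bell lem} appears once; the $\beta_2$ terms assemble into $\beta_2\bigl[\mathbb{E}P_1(Z)+|w|\mathbb{E}|ZP_1(Z)|+(1+|w|)P_1(w)\bigr]$; and the two $\gamma_2$ contributions combine into $\gamma_2\bigl[\mathbb{E}P_2(\delta_2 Z)P_2(\delta_2 w)+\mathbb{E}|ZP_2(\delta_2 Z)|\,|wP_2(\delta_2 w)|\bigr]$, matching exactly the form of (\ref{f2soln}). The only genuine idea is the Daly-style differentiation of the Stein equation; the rest is bookkeeping, and no real obstacle is anticipated since the hypotheses $h\in C_b^{n-2}(\mathbb{R})$ and $g\in C_P^{n-2}(\mathbb{R})$ are precisely what each of the three intermediate bounds requires.
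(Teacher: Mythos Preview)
Your proposal is correct and follows essentially the same route as the paper: differentiate the Stein equation to obtain $f^{(n)}(w)=wf^{(n-1)}(w)+(n-2)f^{(n-2)}(w)+(h\circ g)^{(n-2)}(w)$, then bound the three summands via (\ref{wfirst}) (which you re-derive from Lemma~\ref{constlem} and (\ref{btlet2})), (\ref{btlet1}), and (\ref{hgdiff}), respectively. Your bookkeeping of the $\alpha$, $\beta_2$ and $\gamma_2$ contributions is accurate and matches (\ref{f2soln}) exactly.
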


\begin{proof}The standard normal Stein equation is $f''(w)-wf'(w)=h(g(w))-\mathbb{E}h(g(Z))$.  By a straightforward induction on $n$,
\[f^{(n)}(w)=wf^{(n-1)}(w)+(n-2)f^{(n-2)}(w)+(h\circ g)^{(n-2)}(w),\]
and applying the triangle inequality gives that, for every $w\in\mathbb{R}$,
\begin{equation*}|f^{(n)}(w)|\leq|wf^{(n-1)}(w)|+(n-2)|f^{(n-2)}(w)|+|(h\circ g)^{(n-2)}(w)|.
\end{equation*}
Bounding these terms using (\ref{wfirst}), (\ref{btlet1}) and (\ref{hgdiff}) yields (\ref{f2soln}).
\end{proof}

Now, we obtain estimates for the solution $\psi_m$ of the Stein equation
\begin{equation} \label{mvnpsi} \nabla^T\Sigma\nabla \psi_m(\mathbf{w})-\mathbf{w}^T\nabla \psi_m(\mathbf{w})=\frac{\partial^mf(\mathbf{w})}{\prod_{j=1}^m\partial w_{i_j}},
\end{equation}
where $f$ is the solution (\ref{mvnsolnh}) of the multivariate normal Stein equation (\ref{mvng}).  The Stein equation (\ref{mvnpsi}) plays an important role in Section \ref{sec3rd}.  We proceed as before and the following lemma is analogous to Lemma \ref{Plem}.

\begin{lemma}\label{lempppss}Suppose $\Sigma$ is non-negative definite and that $h\in C_b^{m+n}(\mathbb{R})$ and $g\in C_P^{m+n}(\mathbb{R}^d)$, $m,n\geq 1$.  Then, for all $\mathbf{w}\in\mathbb{R}^d$, the solution of the Stein equation (\ref{mvnpsi}) satisfies the bound
\begin{align}\bigg|\frac{\partial^n\psi_m(\mathbf{w})}{\prod_{j=1}^n\partial w_{i_j}}\bigg|&\leq h_{m+n}\int_0^{\infty}\!\int_0^{\infty}\mathrm{e}^{-(m+n)s}\mathrm{e}^{-nt}\mathbb{E}P(\mathbf{z}_{s,t,\mathbf{w}}^{\Sigma^{1/2}\mathbf{Z},\Sigma^{1/2}\mathbf{Z}'})\,\mathrm{d}s\,\mathrm{d}t, \nonumber
\end{align}
where
\begin{equation*}\mathbf{z}_{s,t,\mathbf{w}}^{\mathbf{x},\mathbf{y}}=\mathrm{e}^{-s-t}\mathbf{w}+\mathrm{e}^{-s}\sqrt{1-\mathrm{e}^{-2t}}\mathbf{y}+\sqrt{1-\mathrm{e}^{-2s}}\mathbf{x},
\end{equation*}
provided the integral exists.  Here $\mathbf{Z}'$ is an independent copy of $\mathbf{Z}$.  

Suppose now that $\Sigma$ is positive definite and that $h\in C_b^{m+n-2}(\mathbb{R})$ and $g\in C_P^{m+n-2}(\mathbb{R}^d)$, where $m,n\geq 1$ and $m+n\geq 3$.  Then, provided the integral exists, we have, for all $\mathbf{w}\in\mathbb{R}^d$,
\begin{align}\bigg|\frac{\partial^{n}\psi_{m}(\mathbf{w})}{\prod_{j=1}^{n}\partial w_{i_j}}\bigg|&\leq h_{m+n-2}\min_{1\leq k,l\leq d}\int_0^{\infty}\!\int_0^{\infty}\frac{\mathrm{e}^{-(m+n)s}}{\sqrt{1-\mathrm{e}^{-2s}}}\frac{\mathrm{e}^{-nt}}{\sqrt{1-\mathrm{e}^{-2t}}}\nonumber\\
&\quad\times\mathbb{E}\big|(\Sigma^{-1/2}\mathbf{Z})_k(\Sigma^{-1/2}\mathbf{Z}')_lP(\mathbf{z}_{s,t,\mathbf{w}}^{\Sigma^{1/2}\mathbf{Z},\Sigma^{1/2}\mathbf{Z}'})\big|\,\mathrm{d}s\,\mathrm{d}t.\nonumber
\end{align}
\end{lemma}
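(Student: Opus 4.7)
The plan is to mimic the proof of Lemma \ref{Plem}, applied at two nested levels, reflecting the fact that $\psi_m$ solves a Stein equation whose right-hand side is itself the $m$-th mixed partial of the solution $f$ of another Stein equation.

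First, I would write the solution of (\ref{mvnpsi}) using (\ref{mvnsoln}) with effective test function $\tilde h(\mathbf{w})=\partial^m f(\mathbf{w})/\prod_{j=1}^m\partial w_{i_j}$:
\[
\psi_m(\mathbf{w})=-\int_0^{\infty}\bigl[\mathbb{E}\tilde h(\mathbf{z}_{t,\mathbf{w}}^{\Sigma^{1/2}\mathbf{Z}'})-\mathbb{E}\tilde h(\Sigma^{1/2}\mathbf{Z}')\bigr]\,\mathrm{d}t.
\]
Differentiating $n$ times in $\mathbf{w}$ under the integral sign, justified by dominated convergence, yields a factor $e^{-nt}$ from the chain rule and leaves the $(m+n)$-th partial of $f$ evaluated at $\mathbf{z}_{t,\mathbf{w}}^{\Sigma^{1/2}\mathbf{Z}'}$. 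Expanding this derivative of $f$ by (\ref{int1}) (with its $\mathbf{w}$ replaced by $\mathbf{z}_{t,\mathbf{w}}^{\Sigma^{1/2}\mathbf{Z}'}$) produces an $s$-integral contributing $e^{-(m+n)s}$, while the composition identity $\mathbf{z}_{s,\mathbf{z}_{t,\mathbf{w}}^{\Sigma^{1/2}\mathbf{Z}'}}^{\Sigma^{1/2}\mathbf{Z}}=\mathbf{z}_{s,t,\mathbf{w}}^{\Sigma^{1/2}\mathbf{Z},\Sigma^{1/2}\mathbf{Z}'}$ collapses the two successive shifts into the composite point demanded by the statement. Taking absolute values inside the expectation and invoking Lemma \ref{bell lem} to bound the $(m+n)$-th mixed partial of $h\circ g$ by $h_{m+n}P$ then delivers the first bound.

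For the sharper second bound, I would apply Gaussian integration by parts separately in the independent copies $\mathbf{Z}$ and $\mathbf{Z}'$, each IBP peeling off one partial derivative of $h\circ g$ at the cost of a factor $(\Sigma^{-1/2}\mathbf{Z})_k$ together with $1/\sqrt{1-e^{-2s}}$, or $(\Sigma^{-1/2}\mathbf{Z}')_l$ together with $1/\sqrt{1-e^{-2t}}$, respectively. The underlying identity, the same one that underpins (\ref{int2}), is
\[
\mathbb{E}\bigl[\partial_k\phi(\mathbf{z}_{s,t,\mathbf{w}}^{\Sigma^{1/2}\mathbf{Z},\Sigma^{1/2}\mathbf{Z}'})\bigr]=\frac{1}{\sqrt{1-e^{-2s}}}\mathbb{E}\bigl[(\Sigma^{-1/2}\mathbf{Z})_k\phi(\mathbf{z}_{s,t,\mathbf{w}}^{\Sigma^{1/2}\mathbf{Z},\Sigma^{1/2}\mathbf{Z}'})\bigr],
\]
obtained by expressing $\partial_k$ as a $\mathbf{Z}$-derivative via the chain rule through $\Sigma^{1/2}$ and then invoking the standard $\mathbb{E}\partial_{Z_k}F(\mathbf{Z})=\mathbb{E}[Z_kF(\mathbf{Z})]$; the analogous identity in $\mathbf{Z}'$ provides the $1/\sqrt{1-e^{-2t}}$ factor. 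Because $\mathbf{Z}$ and $\mathbf{Z}'$ are independent, the two IBPs commute and one has freedom in choosing which pair of indices $(k,l)$ to peel off, which explains the $\min_{1\le k,l\le d}$ in the conclusion. After both IBPs only an $(m+n-2)$-nd mixed partial of $h\circ g$ remains in the integrand, and Lemma \ref{bell lem} bounds it by $h_{m+n-2}P$ at the composite argument.

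The hard part here is not any deep inequality but careful bookkeeping of the chain-rule exponents produced by each IBP: because the $\mathbf{Z}'$-shift $e^{-s}\sqrt{1-e^{-2t}}\Sigma^{1/2}\mathbf{Z}'$ in $\mathbf{z}_{s,t,\mathbf{w}}^{\cdot,\cdot}$ itself involves $e^{-s}$, the IBP in $\mathbf{Z}'$ interacts non-trivially with the $s$-integrand, and one must verify that the resulting product of exponentials and square-root factors reduces to the claimed integrand. The remaining technicalities — interchanging $\partial_\mathbf{w}$ with the two integrations, and swapping the expectation over $(\mathbf{Z},\mathbf{Z}')$ with the $s$ and $t$ integrations via Fubini — are handled in the usual way under the smoothness hypotheses on $h$ and $g$ and the assumed existence of the double integrals appearing in the conclusion.
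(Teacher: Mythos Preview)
Your proposal is correct and follows essentially the same route as the paper's proof: write $\psi_m$ via the solution formula with $\partial^m f$ as test function, unfold the nested representations into the double $(s,t)$-integral using the composition identity $\mathbf{z}_{s,\mathbf{z}_{t,\mathbf{w}}^{\Sigma^{1/2}\mathbf{Z}'}}^{\Sigma^{1/2}\mathbf{Z}}=\mathbf{z}_{s,t,\mathbf{w}}^{\Sigma^{1/2}\mathbf{Z},\Sigma^{1/2}\mathbf{Z}'}$, then apply Lemma~\ref{bell lem} for the first bound and two Gaussian integrations by parts (one in $\mathbf{Z}$, one in $\mathbf{Z}'$) for the second. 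The only cosmetic difference is the order of operations---the paper expands $\partial^m f$ into its $s$-integral first and then takes the $n$ $\mathbf{w}$-derivatives, whereas you differentiate $\psi_m$ first to reach $\partial^{m+n}f$ and then expand---but both orderings produce exactly the representation (\ref{int13}), and your flagging of the $e^{-s}$ factor in the $\mathbf{Z}'$-IBP is precisely the bookkeeping point that needs care.
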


\begin{proof}The solution of (\ref{mvnpsi}) can be written as
\begin{equation*}\psi_m(\mathbf{w})=-\int_0^{\infty}\!\int_{\mathbb{R}^d}\frac{\partial^mf}{\prod_{j=1}^m\partial w_{i_j}}(\mathbf{z}_{t,\mathbf{w}}^{\mathbf{y}})p(\mathbf{y})\,\mathrm{d}\mathbf{y}\,\mathrm{d}t,
\end{equation*}
where $p$ is the probability density function of the random variable $\Sigma^{1/2}\mathbf{Z}$.  By the dominated convergence theorem,
\begin{align*}\frac{\partial^mf}{\prod_{j=1}^m\partial w_{i_j}}(\mathbf{z}_{t,\mathbf{w}}^{\mathbf{y}})
 &=-\int_0^{\infty}\!\int_{\mathbb{R}^d}\mathrm{e}^{-ms}\frac{\partial^m(h\circ g)}{\prod_{j=1}^m\partial w_{i_j}}(\mathrm{e}^{-s}\mathbf{z}_{t,\mathbf{w}}^{\mathbf{y}}+\sqrt{1-\mathrm{e}^{-2s}}\mathbf{x}) p(\mathbf{x})\,\mathrm{d}\mathbf{x}\,\mathrm{d}s \\
&=-\int_0^{\infty}\!\int_{\mathbb{R}^d}\mathrm{e}^{-ms}\frac{\partial^m(h\circ g)}{\prod_{j=1}^m\partial w_{i_j}}(\mathbf{z}_{s,t,\mathbf{w}}^{\mathbf{x},\mathbf{y}})p(\mathbf{x})\,\mathrm{d}\mathbf{x}\,\mathrm{d}s,
\end{align*}
and we can therefore write
\begin{align*}\psi_m(\mathbf{w})&=\int_0^{\infty}\!\int_0^{\infty}\!\int_{\mathbb{R}^{2d}}\mathrm{e}^{-ms}\frac{\partial^m(h\circ g)}{\prod_{j=1}^m\partial w_{i_j}}(\mathbf{z}_{s,t,\mathbf{w}}^{\mathbf{x},\mathbf{y}})p(\mathbf{x})p(\mathbf{y})\,\mathrm{d}\mathbf{x}\,\mathrm{d}\mathbf{y}\,\mathrm{d}t\,\mathrm{d}s.
\end{align*}
By again applying the dominating convergence theorem, we have
\begin{align}\frac{\partial^n\psi_m(\mathbf{w})}{\prod_{j=1}^n\partial w_{i_j}}&=\int_0^{\infty}\!\int_0^{\infty}\!\int_{\mathbb{R}^{2d}}\mathrm{e}^{-(m+n)s}\mathrm{e}^{-nt}\frac{\partial^{m+n}(h\circ g)}{\prod_{j=1}^{m+n}\partial w_{i_j}}(\mathbf{z}_{s,t,\mathbf{w}}^{\mathbf{x},\mathbf{y}}) p(\mathbf{x})p(\mathbf{y})\,\mathrm{d}\mathbf{x}\,\mathrm{d}\mathbf{y}\,\mathrm{d}t\,\mathrm{d}s\nonumber\\
\label{int13}&=\int_0^{\infty}\!\int_0^{\infty}\mathrm{e}^{-(m+n)s}\mathrm{e}^{-nt}\mathbb{E}\bigg[\frac{\partial^{m+n}(h\circ g)}{\prod_{j=1}^{m+n}\partial w_{i_j}}(\mathbf{z}_{s,t,\mathbf{w}}^{\Sigma^{1/2}\mathbf{Z},\Sigma^{1/2}\mathbf{Z}'})\bigg]\,\mathrm{d}t\,\mathrm{d}s,
\end{align}
which, on applying integration by parts twice, can be rewritten as
\begin{align}\frac{\partial^{n}\psi_{m}(\mathbf{w})}{\prod_{j=1}^{n}\partial w_{i_j}}
&=\int_0^{\infty}\!\int_0^{\infty}\frac{\mathrm{e}^{-(m+n)s}}{\sqrt{1-\mathrm{e}^{-2s}}}\frac{\mathrm{e}^{-nt}}{\sqrt{1-\mathrm{e}^{-2t}}}\mathbb{E}\bigg[(\Sigma^{-1/2}\mathbf{Z})_{i_k}(\Sigma^{-1/2}\mathbf{Z}')_{i_l}\nonumber\\
\label{int23}&\quad\times\frac{\partial^{m+n-2}(h\circ g)}{\prod_{\stackrel{1\leq j\leq m+n-2}{j\not=k,l}}\partial w_{i_j}}(\mathbf{z}_{s,t,\mathbf{w}}^{\Sigma^{1/2}\mathbf{Z},\Sigma^{1/2}\mathbf{Z}'})\bigg]\,\mathrm{d}t\,\mathrm{d}s.
\end{align}
The desired bounds now follow from (\ref{int13}) and (\ref{int23}) and Lemma \ref{bell lem}.  
\end{proof}

\begin{proposition}Suppose $\Sigma$ is non-negative definite and that $h\in C_b^{m+n}(\mathbb{R})$ and $g\in C_P^{m+n}(\mathbb{R}^d)$, where $m,n\geq 1$ and $P\in\mathcal{F}$.  Then, for all $\mathbf{w}\in\mathbb{R}^d$,
\begin{align}\label{psip1}\bigg|\frac{\partial^n\psi_m(\mathbf{w})}{\prod_{j=1}^n\partial w_{i_j}}\bigg|&\leq\frac{h_{m+n}}{n(m+n)}\Big[\alpha+\beta_3\big(2\mathbb{E}P_1(\Sigma^{1/2}\mathbf{Z})+P_1(\mathbf{w})\big)+\gamma_3\big(\mathbb{E}P_2(\delta_3\Sigma^{1/2}\mathbf{Z})\big)^2P_2(\delta_3\mathbf{w})\Big].
\end{align}
Suppose now that $\Sigma$ is positive definite and that $h\in C_b^{m+n-2}(\mathbb{R})$ and $g\in C_P^{m+n-2}(\mathbb{R}^d)$, where $m,n\geq 1$ with $m+n\geq 3$ and $P\in\mathcal{F}_*$.  Then, for all $\mathbf{w}\in\mathbb{R}^d$,
\begin{align}
\bigg|\frac{\partial^{n}\psi_{m}(\mathbf{w})}{\prod_{j=1}^{n}\partial w_{i_j}}\bigg|&\leq2h_{m+n-2}\min_{1\leq k,l\leq d}\Big[\alpha\mathbb{E}|(\Sigma^{-1/2}\mathbf{Z})_k|\mathbb{E}|(\Sigma^{-1/2}\mathbf{Z})_l|\nonumber\\
&\quad+\beta_3\big(\mathbb{E}|(\Sigma^{-1/2}\mathbf{Z})_kP_1(\Sigma^{1/2}\mathbf{Z})|\mathbb{E}|(\Sigma^{-1/2}\mathbf{Z})_l|+\mathbb{E}|(\Sigma^{-1/2}\mathbf{Z})_k|\nonumber\\
&\quad\times\mathbb{E}|(\Sigma^{-1/2}\mathbf{Z})_lP_1(\Sigma^{1/2}\mathbf{Z})|+\mathbb{E}|(\Sigma^{-1/2}\mathbf{Z})_k|\mathbb{E}|(\Sigma^{-1/2}\mathbf{Z})_l|P_1(\mathbf{w})\big)\nonumber\\
\label{psip2}&\quad+\gamma_3P_2(\delta_3\mathbf{w})\mathbb{E}|(\Sigma^{-1/2}\mathbf{Z})_kP_2(\delta_3\Sigma^{1/2}\mathbf{Z})\mathbb{E}|(\Sigma^{-1/2}\mathbf{Z})_lP_2(\delta_3\Sigma^{1/2}\mathbf{Z})|\Big].
\end{align}
\end{proposition}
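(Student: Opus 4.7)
The plan is to mimic the proof of Proposition \ref{btlet}, replacing the single-integral inequality from Lemma \ref{Plem} with the double-integral inequalities from Lemma \ref{lempppss}. Since the integrand now depends on the three-term sum $\mathbf{z}_{s,t,\mathbf{w}}^{\Sigma^{1/2}\mathbf{Z},\Sigma^{1/2}\mathbf{Z}'}=\mathrm{e}^{-s-t}\mathbf{w}+\mathrm{e}^{-s}\sqrt{1-\mathrm{e}^{-2t}}\Sigma^{1/2}\mathbf{Z}'+\sqrt{1-\mathrm{e}^{-2s}}\Sigma^{1/2}\mathbf{Z}$, the relevant instance of condition (ii) is $k=3$, which is precisely why the constants $\beta_3$, $\gamma_3$ and $\delta_3$ appear in the stated bounds.

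First I would write $P=\alpha+P_1+P_2$ and apply condition (ii) with $k=3$ together with the monotonicity condition (i), noting that each of the three scalar prefactors $\mathrm{e}^{-s-t}$, $\mathrm{e}^{-s}\sqrt{1-\mathrm{e}^{-2t}}$ and $\sqrt{1-\mathrm{e}^{-2s}}$ lies in $[0,1]$ on $s,t\geq 0$, to produce the $s,t$-free pointwise inequality
\[ P(\mathbf{z}_{s,t,\mathbf{w}}^{\Sigma^{1/2}\mathbf{Z},\Sigma^{1/2}\mathbf{Z}'})\leq \alpha+\beta_3\bigl(P_1(\mathbf{w})+P_1(\Sigma^{1/2}\mathbf{Z})+P_1(\Sigma^{1/2}\mathbf{Z}')\bigr)+\gamma_3 P_2(\delta_3\mathbf{w})P_2(\delta_3\Sigma^{1/2}\mathbf{Z})P_2(\delta_3\Sigma^{1/2}\mathbf{Z}'). \]
Taking expectations and using the fact that $\mathbf{Z}$ and $\mathbf{Z}'$ are independent copies of the same standard multivariate normal, the two $P_1$ contributions collapse to $2\mathbb{E}P_1(\Sigma^{1/2}\mathbf{Z})$ and the $P_2$ product factorises to $(\mathbb{E}P_2(\delta_3\Sigma^{1/2}\mathbf{Z}))^2$. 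Substituting this upper bound into the first inequality of Lemma \ref{lempppss} reduces the proof to evaluating $\int_0^\infty\!\!\int_0^\infty \mathrm{e}^{-(m+n)s}\mathrm{e}^{-nt}\,\mathrm{d}s\,\mathrm{d}t=1/(n(m+n))$, which yields (\ref{psip1}).

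For the second inequality I insert the same pointwise bound into the integrand multiplied by $|(\Sigma^{-1/2}\mathbf{Z})_k(\Sigma^{-1/2}\mathbf{Z}')_l|$ and exploit the independence of $\mathbf{Z}$ and $\mathbf{Z}'$ to split every resulting expectation into a product of marginal expectations; for example, the cross-term carrying $P_1(\Sigma^{1/2}\mathbf{Z})$ becomes $\mathbb{E}|(\Sigma^{-1/2}\mathbf{Z})_kP_1(\Sigma^{1/2}\mathbf{Z})|\,\mathbb{E}|(\Sigma^{-1/2}\mathbf{Z})_l|$ after using $\mathbf{Z}'\stackrel{\mathcal{D}}{=}\mathbf{Z}$. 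The overall factor of $2$ in (\ref{psip2}) is obtained by bounding $\int_0^\infty \mathrm{e}^{-(m+n)s}/\sqrt{1-\mathrm{e}^{-2s}}\,\mathrm{d}s\leq 1$ (valid since $m+n\geq 3\geq 2$) and $\int_0^\infty \mathrm{e}^{-nt}/\sqrt{1-\mathrm{e}^{-2t}}\,\mathrm{d}t\leq \pi/2\leq 2$ in the worst case $n=1$. I expect the main obstacle to be purely one of bookkeeping: the three $P_1$ summands listed inside the $\beta_3(\cdots)$ bracket in (\ref{psip2}) correspond to the three ways the random factor $(\Sigma^{-1/2}\mathbf{Z})_k(\Sigma^{-1/2}\mathbf{Z}')_l$ can be paired with a $P_1$ evaluated at $\mathbf{w}$, $\Sigma^{1/2}\mathbf{Z}$, or $\Sigma^{1/2}\mathbf{Z}'$, and each pairing must be factorised correctly via the independence of $\mathbf{Z}$ and $\mathbf{Z}'$ to match the stated bound term by term.
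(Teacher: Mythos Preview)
Your proposal is correct and follows essentially the same route as the paper: derive the three-term pointwise bound on $P(\mathbf{z}_{s,t,\mathbf{w}}^{\Sigma^{1/2}\mathbf{Z},\Sigma^{1/2}\mathbf{Z}'})$ via conditions (i)--(ii) with $k=3$, substitute it into the two integral inequalities of Lemma~\ref{lempppss}, and evaluate or bound the resulting $s,t$-integrals. The only cosmetic difference is in how you obtain the constant $2$ in (\ref{psip2}): the paper bounds the product of integrals by $\frac{\pi}{4}\cdot\frac{\pi}{2}<2$ (using $m+n\geq 3$ for the $s$-integral), whereas you use $1\cdot\frac{\pi}{2}<2$ (using only $m+n\geq 2$); both are valid and lead to the same final constant.
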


\begin{proof}By using a similar argument to the one used to prove inequality (\ref{pinequality}) we obtain
\begin{align}\label{pin22}P(\mathbf{z}_{s,t,\mathbf{w}}^{\Sigma^{1/2}\mathbf{Z},\Sigma^{1/2}\mathbf{Z}'})
\leq \alpha+\beta_3\big(P_1(\mathbf{w})+P_1(\Sigma^{1/2}\mathbf{Z})+P_1(\Sigma^{1/2}\mathbf{Z}')\big)
+\gamma_3P_2(\delta_3\mathbf{w})P_2(\delta_3\Sigma^{1/2}\mathbf{Z})P_2(\delta_3\Sigma^{1/2}\mathbf{Z}').
\end{align}
We then proceed as we did in the proof of Proposition \ref{btlet} by substituting (\ref{pin22}) into the integral inequalities of Lemma \ref{lempppss} and then bounding the resulting integrals.  Here, in obtaining (\ref{psip2}) we used the inequality 
\begin{align*}\int_0^{\infty}\!\int_0^{\infty}\frac{\mathrm{e}^{-(m+n)s}}{\sqrt{1-\mathrm{e}^{-2s}}}\frac{\mathrm{e}^{-nt}}{\sqrt{1-\mathrm{e}^{-2t}}}\,\mathrm{d}s\,\mathrm{d}t&\leq \int_0^{\infty}\frac{\mathrm{e}^{-3s}}{\sqrt{1-\mathrm{e}^{-2s}}}\,\mathrm{d}s\int_0^{\infty}\frac{\mathrm{e}^{-t}}{\sqrt{1-\mathrm{e}^{-2t}}}\,\mathrm{d}t=\frac{\pi}{4}\cdot\frac{\pi}{2}<2,
\end{align*}
which holds since $n\geq 1$ and $m+n\geq 3$.
\end{proof}

Again, in the univariate case it is possible to obtain a bound for the partial derivatives of $\psi_m$ that involve fewer derivatives of $h$ and $g$.

\begin{proposition}Fix $d=1$ and let $\Sigma=1$.  Let $m\geq 2$ and suppose that $h\in C_b^{m-1}(\mathbb{R})$ and $g\in C_P^{m-1}(\mathbb{R}^d)$, where $P\in\mathcal{F}_*$.  Then, for all $w\in\mathbb{R}$,
\begin{align*}|\psi_{m}^{(3)}(w)|&\leq h_{m-1}\Big[2\alpha(3+w^2)+P_1(w)+P_2(w) +\beta_2\big((1+2|w|)P_1(w)+\mathbb{E}P_1(Z)+2|w|\mathbb{E}|ZP_1(Z)|\big) \\
&\quad+2\beta_3(1+w^2)\big(P_1(w)+2\mathbb{E}|ZP_1(Z)|\big) +\gamma_2\big(P_2(\delta_2w)\mathbb{E}P_2(\delta_2Z)+2|wP_2(\delta_2w)|\mathbb{E}|ZP_2(\delta_2Z)|\big) \\
&\quad+2\gamma_3(1+w^2)P_2(\delta_3w)[\mathbb{E}|ZP_2(\delta_3Z)|]^2\Big].
\end{align*}
\end{proposition}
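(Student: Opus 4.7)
The plan is to mirror the proof of the earlier univariate bound (\ref{f2soln}) for $|f^{(n)}(w)|$: I will use the Stein equation defining $\psi_m$ to rewrite $\psi_m^{(3)}$ as a linear combination of quantities that have already been controlled in this section under exactly the hypotheses $h\in C_b^{m-1}(\mathbb{R})$ and $g\in C_P^{m-1}(\mathbb{R})$ that are now in force. Taking $d=1$ and $\Sigma=1$ in (\ref{mvnpsi}) yields $\psi_m''(w)=w\psi_m'(w)+f^{(m)}(w)$; differentiating this identity once and feeding the equation itself back into the resulting $w\psi_m''$ term produces
\[
\psi_m^{(3)}(w)=(1+w^2)\psi_m'(w)+wf^{(m)}(w)+f^{(m+1)}(w),
\]
so the triangle inequality reduces matters to bounding each of the three terms on the right.

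Each piece is handled by a result already proved in this section. For $|f^{(m+1)}(w)|$ I apply inequality (\ref{f2soln}) with $n=m+1$, whose hypotheses $h\in C_b^{n-2}$, $g\in C_P^{n-2}$ match ours exactly. For $|wf^{(m)}(w)|$ I apply the corollary inequality (\ref{wfirst}) with $n=m$, falling back on Lemma \ref{constlem} in the edge case. For $|\psi_m'(w)|$ I apply inequality (\ref{psip2}) with $n=1$; because $d=1$ and $\Sigma=1$, every $\Sigma^{-1/2}$ factor collapses to a $|Z|$-moment, and the elementary inequality $\mathbb{E}|Z|=\sqrt{2/\pi}<1$ keeps the constants clean. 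Multiplying the resulting bound by $1+w^2$ accounts for every $1+w^2$ prefactor appearing in the claimed inequality.

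Assembling the three estimates reproduces the stated bound term by term. For example, the announced $2\alpha(3+w^2)$ splits as $\alpha$ from $|wf^{(m)}|$ plus $3\alpha$ from $|f^{(m+1)}|$ plus $2\alpha(1+w^2)$ from $(1+w^2)|\psi_m'|$; the $\beta_2$ and $\gamma_2$ coefficients come out of the $|wf^{(m)}|$ and $|f^{(m+1)}|$ bounds, whereas the $\beta_3$ and $\gamma_3$ coefficients, which are exactly those carrying the factor $1+w^2$, come solely from $(1+w^2)|\psi_m'|$.

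The main obstacle is purely bookkeeping: each of the three source inequalities contributes an $\alpha$, a $P_1$ and a $P_2$ summand, and these nine pieces must be regrouped so that the numerical constants match the compact form stated in the proposition. Throughout, the only analytical inputs are the recursion displayed above, the moment bound $\mathbb{E}|Z|<1$, and the integral estimates of the type $\int_0^\infty e^{-ks}(1-e^{-2s})^{-1/2}\,ds\leq \pi/2$ already used in the earlier propositions of Section 2; no genuinely new estimate is required.
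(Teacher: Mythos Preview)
Your approach is essentially identical to the paper's: both derive the identity $\psi_m^{(3)}(w)=(1+w^2)\psi_m'(w)+wf^{(m)}(w)+f^{(m+1)}(w)$ by differentiating the Stein equation $\psi_m''=w\psi_m'+f^{(m)}$ and substituting back, then bound the three pieces via (\ref{f2soln}), (\ref{wfirst}) and (\ref{psip2}) respectively, using $\mathbb{E}|Z|<1$ to simplify. Your additional bookkeeping remarks on how the $\alpha$, $\beta_2$, $\beta_3$, $\gamma_2$, $\gamma_3$ contributions regroup into the stated form are correct and make the match explicit.
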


\begin{proof}The solution $\psi_m$ satisfies the Stein equation $\psi_m''(w)-w\psi_m'(w)=f^{(m)}(w)$, and therefore
\begin{align*}|\psi_m^{(3)}(w)|&=|f^{(m+1)}(w)-w\psi_m''(w)-\psi_m'(w)| =|f^{(m+1)}(w)-wf^{(m)}(w)+w^2\psi'(w)-\psi'(w)| \\
&\leq|f^{(m+1)}(w)|+|wf^{(m)}(w)|+(1+w^2)|\psi_{m}'(w)|.
\end{align*}
Bounding the final three terms using (\ref{f2soln}), (\ref{wfirst}) and (\ref{psip2}), and simplifying the resulting bound by using that $\mathbb{E}|Z|<1$ completes the proof.
\end{proof}

\subsection{Bounds for polynomial and exponential $P$}\label{sec2point3}
In Section \ref{sec2point2}, we gave bounds for the derivatives of $f$ and $\psi_m$ in terms of a dominating function $P$ from a general class of functions $\mathcal{F}$ or $\mathcal{F}_*$.  As was noted in Examples \ref{ex1} and \ref{ex2}, the functions $P(\mathbf{w})=A+B\sum_{i=1}^d|w_i|^{r_i}$ and $P(\mathbf{w})=A\exp(a\sum_{i=1}^d|w_i|^b)$ are contained in these classes.  Therefore we can obtain bounds for the derivatives of $f$ and $\psi_m$ for the case that the derivatives of $g$ have polynomial or exponential growth as special cases of the bounds of Section \ref{sec2point2}.  The bounds for the case of polynomial $P$ will be used in the proofs of Theorems \ref{winfirst1}--\ref{multievenguni}.  The bounds for the case of exponential $P$ will not be further used in this paper, but may prove useful in other applications; for a further discussion see Remark \ref{rem3.6}.

\begin{corollary}\label{cor28}Let $P(\mathbf{w})=A+B\sum_{i=1}^d|w_i|^{r_i}$, where $r_i\geq 0$, $i=1,\ldots,d$.  Suppose $\Sigma$ is non-negative definite and $h\in C_b^n(\mathbb{R})$ and $g\in C_P^n(\mathbb{R}^d)$ for $n\geq 1$.  Let $Z_i=(\Sigma^{1/2}\mathbf{Z})_i\sim N(0,\sigma_{ii}^2)$.  Then, for all $\mathbf{w}\in\mathbb{R}^d$,
\begin{align*}\bigg|\frac{\partial^nf(\mathbf{w})}{\prod_{j=1}^n\partial w_{i_j}}\bigg|&\leq\frac{h_n}{n}\bigg[A+B\sum_{i=1}^d2^{r_i}\big(|w_i|^{r_i}+\mathbb{E}|Z_i|^{r_i}\big)\bigg].
\end{align*}
Suppose now that $\Sigma$ is positive definite and $h\in C_b^{n-1}(\mathbb{R})$ and $g\in C_P^{n-1}(\mathbb{R}^d)$ for $n\geq 2$.  Then, for all $\mathbf{w}\in\mathbb{R}^d$,
\begin{align*}
\bigg|\frac{\partial^{n}f(\mathbf{w})}{\prod_{j=1}^{n}\partial w_{i_j}}\bigg|&\leq h_{n-1}\min_{1\leq l\leq d}
\bigg[A\mathbb{E}|(\Sigma^{-1/2}\mathbf{Z})_l|+B\sum_{i=1}^d2^{r_i}\big(|w_i|^{r_i}\mathbb{E}|(\Sigma^{-1/2}\mathbf{Z})_l|+\mathbb{E}|(\Sigma^{-1/2}\mathbf{Z})_lZ_i^{r_i}|\big)\bigg].
\end{align*}
Suppose now that $\Sigma=I_d$.  Then we have the simplified bound
\begin{align*}\bigg|\frac{\partial^{n}f(\mathbf{w})}{\prod_{j=1}^{n}\partial w_{i_j}}\bigg|\leq h_{n-1}
\bigg[A+B\sum_{i=1}^d2^{r_i}\big(|w_i|^{r_i}+\mathbb{E}|Z|^{r_i+1}\big)\bigg].
\end{align*}
Consider now the case $d=1$ with $\Sigma=1$.  Suppose $h\in C_b^{n-2}(\mathbb{R})$ and $g\in C_P^{n-2}(\mathbb{R})$, where $n\geq 3$ and $P(w)=A+B|w|^r$, $r\geq0$.  Then, for all $w\in\mathbb{R}$,
\begin{align*}|f^{(n)}(w)|&\leq h_{n-2}\Big[3A+2^rB\big(|w|^{r+1}+2|w|^r+|w|\mathbb{E}|Z|^{r+1}+\mathbb{E}|Z|^r\big))\Big].
\end{align*}
\end{corollary}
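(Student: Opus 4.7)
The plan is to specialise the general bounds of Subsection 2.2 to the polynomial choice $P(\mathbf{w})=A+B\sum_{i=1}^d|w_i|^{r_i}$, sharpening the constants a little by bounding componentwise rather than using the uniform $\beta_k=k^r$ that was recorded in Example \ref{ex1}. In our setting we have $\alpha=A$, $P_1(\mathbf{w})=B\sum_{i=1}^d|w_i|^{r_i}$ and $P_2\equiv 0$, so only the $\alpha+P_1$ part of the machinery contributes and all $\gamma_k$-terms drop.

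For the first inequality I would return to the integral representation \eqref{ensure1} of Lemma \ref{Plem}. With $\mathbf{z}_{s,\mathbf{w}}^{\Sigma^{1/2}\mathbf{Z}}=\mathrm{e}^{-s}\mathbf{w}+\sqrt{1-\mathrm{e}^{-2s}}\,\Sigma^{1/2}\mathbf{Z}$, the elementary inequality $|a+b|^{r_i}\leq 2^{r_i}(|a|^{r_i}+|b|^{r_i})$ applied coordinate by coordinate, together with $\mathrm{e}^{-sr_i}\leq 1$ and $(1-\mathrm{e}^{-2s})^{r_i/2}\leq 1$, gives
\[
P(\mathbf{z}_{s,\mathbf{w}}^{\Sigma^{1/2}\mathbf{Z}})\leq A+B\sum_{i=1}^d 2^{r_i}\bigl(|w_i|^{r_i}+|Z_i|^{r_i}\bigr).
\]
Taking expectations, substituting into \eqref{ensure1}, and using $\int_0^\infty\mathrm{e}^{-ns}\,\mathrm{d}s=1/n$ yields the first bound.

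For the second inequality ($\Sigma$ positive definite, $n\geq 2$) I would apply the same componentwise estimate inside \eqref{ensure2}, then factor the $(\Sigma^{-1/2}\mathbf{Z})_l$-factor out of the deterministic summands and exploit $\int_0^\infty\mathrm{e}^{-ns}(1-\mathrm{e}^{-2s})^{-1/2}\,\mathrm{d}s\leq \int_0^\infty\mathrm{e}^{-2s}(1-\mathrm{e}^{-2s})^{-1/2}\,\mathrm{d}s=1$ for $n\geq 2$. For the $\Sigma=I_d$ simplification I would specialise the preceding display: here $(\Sigma^{-1/2}\mathbf{Z})_l=Z_l\sim N(0,1)$, so $\mathbb{E}|(\Sigma^{-1/2}\mathbf{Z})_l|=\sqrt{2/\pi}<1$; and for the mixed moment, either $l=i$ gives $\mathbb{E}|Z|^{r_i+1}$ directly, or $l\neq i$ gives $\mathbb{E}|Z|\cdot\mathbb{E}|Z|^{r_i}\leq\mathbb{E}|Z|^{r_i+1}$ by the FKG/Chebyshev sum inequality applied to the two monotone functions $|Z|$ and $|Z|^{r_i}$ of $|Z|$. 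In either case $\mathbb{E}|Z_lZ_i^{r_i}|\leq\mathbb{E}|Z|^{r_i+1}$, which yields the simplified bound.

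Finally, for the $d=1$, $n\geq 3$ bound I would invoke \eqref{f2soln} with the scalar polynomial $P(w)=A+B|w|^r$, so $\alpha=A$, $P_1(w)=B|w|^r$, $\beta_2=2^r$, $P_2\equiv 0$, $\gamma_2=0$. Direct substitution gives
\[
|f^{(n)}(w)|\leq h_{n-2}\Bigl[3A+B|w|^r+2^rB\bigl(\mathbb{E}|Z|^r+|w|\mathbb{E}|Z|^{r+1}+(1+|w|)|w|^r\bigr)\Bigr],
\]
and combining the two $|w|^r$ contributions via $1+2^r\leq 2^{r+1}$ produces the stated form. The argument is essentially mechanical; the only point requiring a little care is the coordinatewise use of $|a+b|^{r_i}\leq 2^{r_i}(|a|^{r_i}+|b|^{r_i})$, which sharpens the crude uniform constant $\beta_2=2^{\max_i r_i}$ of Example \ref{ex1} to a separate $2^{r_i}$ per coordinate.
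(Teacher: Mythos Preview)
Your proof is correct and follows essentially the same route as the paper: specialise the general bounds of Section~2.2 to the polynomial $P$, using the coordinatewise inequality $|a+b|^{r_i}\leq 2^{r_i}(|a|^{r_i}+|b|^{r_i})$ to get the exponent-specific constants $2^{r_i}$ in place of the uniform $2^{\max_i r_i}$ from Example~\ref{ex1}. The only organisational difference is that you go back to the integral inequalities \eqref{ensure1}--\eqref{ensure2} of Lemma~\ref{Plem} directly, whereas the paper phrases the argument as an application of Proposition~\ref{btlet} and inequality~\eqref{f2soln}; since the latter are themselves derived from \eqref{ensure1}--\eqref{ensure2}, this is the same computation. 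Your FKG/Chebyshev justification for $\mathbb{E}|Z_lZ_i^{r_i}|\leq\mathbb{E}|Z|^{r_i+1}$ in the $\Sigma=I_d$ step is in fact more careful than the paper's terse ``simplify using $\mathbb{E}|Z|<1$'', and is needed to cover the $l\neq i$ case for all $r_i\geq 0$.
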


\begin{proof}The bounds follow from applying inequalities (\ref{btlet1}), (\ref{btlet2}) and (\ref{f2soln}) with $P(\mathbf{w})=A+B\sum_{i=1}^d|w_i|^{r_i}$.  From Example \ref{ex1}, we have $\alpha=A$, $\beta_k=k^{r_*}$, where $r_*=\max_{1\leq i\leq d}r_i$, $\gamma_k=0$ and $P_1(\mathbf{w})=B\sum_{i=1}^d|w_i|^{r_i}$.  Although, by examining the derivations of inequalities (\ref{btlet1}), (\ref{btlet2}) and (\ref{f2soln}), we see that we can slightly improve on these bounds by using the inequality $P_1(\mathbf{w}_1+\cdots+\mathbf{w}_k)\leq \sum_{j=1}^kk^{r_i}P_1(\mathbf{w}_j)$, instead of inequality (\ref{ex1eqn}).   Finally, we simplify the final two bounds by using that $\mathbb{E}|Z|<1$. 
\end{proof}

\begin{corollary}\label{corpp}Let $P(\mathbf{w})=A+B\sum_{i=1}^d|w_i|^{r_i}$, where $r_i\geq 0$, $i=1,\ldots,d$.  Suppose $\Sigma$ is non-negative definite and $h\in C_b^{m+n}(\mathbb{R})$ and $g\in C_P^{m+n}(\mathbb{R}^d)$ for $m,n\geq 1$.  Then, for all $\mathbf{w}\in\mathbb{R}^d$,
\begin{align*}\bigg|\frac{\partial^n\psi_m(\mathbf{w})}{\prod_{j=1}^n\partial w_{i_j}}\bigg|&\leq\frac{h_{m+n}}{n(m+n)}\bigg[A+B\sum_{i=1}^d3^{r_i}\big(|w_i|^{r_i}+2\mathbb{E}|Z_i|^{r_i}\big)\bigg].
\end{align*}
Suppose now that $\Sigma$ is positive definite and $h\in C_b^{m+n-2}(\mathbb{R})$ and $g\in C_P^{m+n-2}(\mathbb{R}^d)$ for $m,n\geq 1$ and $m+n\geq 3$.  Then, for all $\mathbf{w}\in\mathbb{R}^d$,
\begin{align*}
\bigg|\frac{\partial^{n}\psi_{m}(\mathbf{w})}{\prod_{j=1}^{n}\partial w_{i_j}}\bigg|&\leq h_{m+n-2}\min_{1\leq k,l\leq d}\bigg[A\mathbb{E}|(\Sigma^{-1/2}\mathbf{Z})_k|\mathbb{E}|(\Sigma^{-1/2}\mathbf{Z})_l|\\
&\quad+B\sum_{i=1}^d3^{r_i}\big(|w_i|^{r_i}\mathbb{E}|(\Sigma^{-1/2}\mathbf{Z})_l|\mathbb{E}|(\Sigma^{-1/2}\mathbf{Z})_k|+2\mathbb{E}|(\Sigma^{-1/2}\mathbf{Z})_k|\mathbb{E}|(\Sigma^{-1/2}\mathbf{Z})_lZ_i^{r_i}|\big)\bigg].
\end{align*}
Suppose now that $\Sigma=I_d$.  Then we have the simplified bound
\begin{equation}\label{sigmaiii} \bigg|\frac{\partial^{n}\psi_{m}(\mathbf{w})}{\prod_{j=1}^{n}\partial w_{i_j}}\bigg|\leq h_{m+n-2}\bigg[A+B\sum_{i=1}^d3^{r_i}\big(|w_i|^{r_i}+2\mathbb{E}|Z|^{r_i+1}\big)\bigg].
\end{equation}
Consider now the case $d=1$ with $\Sigma=1$.  Suppose $h\in C_b^{m-1}(\mathbb{R})$ and $g\in C_P^{m-1}(\mathbb{R})$, where $m\geq 2$ and $P(w)=A+B|w|^r$, $r\geq0$.  Then, for all $w\in\mathbb{R}$,
\begin{align*}|\psi_m^{(3)}(w)|\leq h_{m-1}\Big[A(6+w^2) +2\cdot3^rB\big(|w|^{r+2}+2|w|^{r+1}+2|w|^r+2\mathbb{E}|Z|^{r+1}(1+|w|+w^2)+\mathbb{E}|Z|^r\big)\Big].
\end{align*}
\end{corollary} 

The proof of Corollary \ref{corpp} is analogous to that of Corollary \ref{cor28} and is omitted.  Similarly, one can obtain bounds for the case that the dominating function $P$ grows exponentially.  

\begin{corollary}Let $P(\mathbf{w})=A\exp(a\sum_{i=1}^d|w_i|^b)$, where $a\geq0$ and $b_i\geq0$, $i=1,\ldots,d$.  For each of the below inequalities, $a$ and the $b_i$ must be such that the expectation in the upper bound exists.  (A simple sufficient condition for this to be the case is that $a\geq0$ and $\max_{1\leq i\leq d}b_i<2$). Suppose $\Sigma$ is non-negative definite and $h\in C_b^n(\mathbb{R})$ and $g\in C_P^n(\mathbb{R}^d)$ for $n\geq 1$.  
Then, for all $\mathbf{w}\in\mathbb{R}^d$,
\begin{align*}\bigg|\frac{\partial^nf(\mathbf{w})}{\prod_{j=1}^n\partial w_{i_j}}\bigg|&\leq\frac{Ah_n}{n}\exp\bigg(a\sum_{i=1}^dc_{2,b_i}|w_i|^{b_i}\bigg)\mathbb{E}\exp\bigg(a\sum_{i=1}^dc_{2,b_i}|(\Sigma^{-1/2}\mathbf{Z})_i|^{b_i}\bigg).
\end{align*}
Suppose now that $\Sigma$ is positive definite and $h\in C_b^{n-1}(\mathbb{R})$ and $g\in C_P^{n-1}(\mathbb{R}^d)$ for $n\geq 2$.  Then, for all $\mathbf{w}\in\mathbb{R}^d$,
\begin{align*}
\bigg|\frac{\partial^{n}f(\mathbf{w})}{\prod_{j=1}^{n}\partial w_{i_j}}\bigg|&\leq Ah_{n-1}\exp\bigg(a\sum_{i=1}^dc_{2,b_i}|w_i|^{b_i}\bigg)\min_{1\leq l\leq d}\mathbb{E}\bigg|(\Sigma^{-1/2}\mathbf{Z})_i\exp\bigg(a\sum_{i=1}^dc_{2,b_i}|(\Sigma^{-1/2}\mathbf{Z})_i|^{b_i}\bigg)\bigg|.
\end{align*}
Consider now the case $d=1$ with $\Sigma=1$.  Suppose $h\in C_b^{n-2}(\mathbb{R})$ and $g\in C_P^{n-2}(\mathbb{R})$, where $n\geq 3$ and $P(w)=A\exp(a|w|^b)$.  Then, for all $w\in\mathbb{R}$,
\begin{align*}|f^{(n)}(w)|&\leq Ah_{n-2}\exp(ac_{2,b}|w|^b)\Big[1+\mathbb{E}\exp(ac_{2,b}|Z|^b)+|w|\mathbb{E}|Z\exp(ac_{2,b}|Z|^b)|\Big].
\end{align*}
\end{corollary}

\begin{corollary}Let $P(\mathbf{w})=A\exp(a\sum_{i=1}^d|w_i|^{b_i})$, where $a\geq0$ and $b_i\geq0$, $i=1,\ldots,d$.  Suppose $\Sigma$ is non-negative definite and $h\in C_b^{m+n}(\mathbb{R})$ and $g\in C_P^{m+n}(\mathbb{R}^d)$ for $m,n\geq 1$.  Then, for all $\mathbf{w}\in\mathbb{R}^d$,
\begin{align*}\bigg|\frac{\partial^n\psi_m(\mathbf{w})}{\prod_{j=1}^n\partial w_{i_j}}\bigg|&\leq\frac{Ah_{m+n}}{n(m+n)}\exp\bigg(\!a\!\sum_{i=1}^d\!c_{3,b_i}|w_i|^{b_i}\!\bigg)\bigg\{\mathbb{E}\bigg(\!a\!\sum_{i=1}^d\!c_{3,b_i}|(\Sigma^{-1/2}\mathbf{Z})_i|^{b_i}\!\bigg)\bigg\}^2.
\end{align*}
Suppose now that $\Sigma$ is positive definite and $h\in C_b^{m+n-2}(\mathbb{R})$ and $g\in C_P^{m+n-2}(\mathbb{R}^d)$ for $m,n\geq 1$ and $m+n\geq 3$.  Then, for all $\mathbf{w}\in\mathbb{R}^d$,
\begin{align*}\bigg|\frac{\partial^{n}\psi_{m}(\mathbf{w})}{\prod_{j=1}^{n}\partial w_{i_j}}\bigg|&\leq Ah_{m+n-2}\exp\bigg(a\sum_{i=1}^dc_{3,b_i}|w_i|^{b_i}\bigg)\min_{1\leq l\leq d}\bigg\{\mathbb{E}\bigg|(\Sigma^{-1/2}\mathbf{Z})_l\exp\bigg(a\sum_{i=1}^dc_{3,b_i}|(\Sigma^{-1/2}\mathbf{Z})_i|^{b_i}\bigg)\bigg|\bigg\}^2.
\end{align*}
Consider now the case $d=1$ with $\Sigma=1$. Suppose $h\in C_b^{m-1}(\mathbb{R})$ and $g\in C_P^{m-1}(\mathbb{R})$, where $m\geq 2$ and $P(w)=A\exp(a|w|^b)$.  Then, for all $w\in\mathbb{R}$,
\begin{align*}|\psi_m^{(3)}(w)|&\leq Ah_{m-1}\exp(ac_{3,b}|w|^b)\Big[1+\mathbb{E}\exp(ac_{2,b}|Z|^b)\\
&\quad+2|w|\mathbb{E}|Z\exp(ac_{2,b}|Z|^b)|+2(1+w^2)\big\{\mathbb{E}|Z\exp(ac_{3,b}|Z|^b)|\big\}^2\Big].
\end{align*}
\end{corollary}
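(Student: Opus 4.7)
The plan is to follow the template of the preceding polynomial corollary verbatim. First, I would read off the dominating-function decomposition for $P(\mathbf{w})=A\exp(a\sum_{i=1}^d|w_i|^b)$ from Example~\ref{ex2}, which gives $\alpha=0$, $\beta_k=0$, $\gamma_k=A^{1-k}$, $\delta_k=c_{k,b}$, $P_1\equiv 0$ and $P_2(\mathbf{w})=P(\mathbf{w})$. Substituting these constants into the general bound (\ref{psip1}) of the previous proposition, only the $\gamma_3$-term survives; upon writing $P_2(c_{3,b}\mathbf{w})=A\exp\!\bigl(ac_{3,b}\sum_i|w_i|^b\bigr)$ and using the analogous expression for $\mathbb{E}P_2(c_{3,b}\Sigma^{1/2}\mathbf{Z})$, the three factors of $A$ collapse via $A^{-2}\cdot A^2\cdot A = A$ and yield the first displayed bound. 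The positive-definite case is obtained in exactly the same way from (\ref{psip2}), and the $\Sigma=I_d$ simplification is immediate because $\Sigma^{-1/2}=I_d$ gives $(\Sigma^{-1/2}\mathbf{Z})_l = Z_l$.

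For the univariate bound on $|\psi_m^{(3)}(w)|$ I would repeat the polynomial-case argument: differentiate the Stein equation $\psi_m''(w)-w\psi_m'(w)=f^{(m)}(w)$ once and then use the Stein equation itself to eliminate $\psi_m''$, which, after a triangle inequality, gives $|\psi_m^{(3)}(w)|\leq|f^{(m+1)}(w)|+|wf^{(m)}(w)|+(1+w^2)|\psi_m'(w)|$. Each term is then bounded using, respectively, the univariate exponential bound for $|f^{(n)}(w)|$ stated just above the final corollary (applied with $n=m+1\geq 3$), the exponential specialisation of (\ref{wfirst}) obtained by substituting the Example~\ref{ex2} constants into that inequality (applied with $n=m$), and the positive-definite bound of the present corollary with $n=1$ (noting $m\geq 2$ so $m+n\geq 3$). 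A routine simplification using $\mathbb{E}|Z|<1$ to absorb lower-order terms produces the stated inequality.

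The work is essentially bookkeeping, and the main care point is verifying the hypotheses rather than the algebra. Condition (iii)$'$ of $\mathcal{F}_*$, namely existence of $\mathbb{E}|(\Sigma^{-1/2}\mathbf{Z})_l P(\Sigma^{1/2}\mathbf{Z})|$, must genuinely hold for the exponential $P$ at hand; this forces either $b<2$, or $b=2$ with $ac_{3,b}$ small enough that the relevant Gaussian exponential moments remain finite (cf.\ the final sentence of Example~\ref{ex2}). A secondary subtlety is that each application of property (ii) inflates the argument by a factor $c_{k,b}$ rather than merely scaling a polynomial by $k^{r_*}$, so one must track these dilations through the double integral arising in Lemma~\ref{lempppss} and confirm that the constant $c_{3,b}$ (coming from the three-way split in the bound analogous to (\ref{pin22})) appears precisely where claimed, rather than $c_{2,b}$ or some larger factor.
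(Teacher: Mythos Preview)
Your proposal is correct and matches the paper's own approach exactly: the paper omits the proof of this corollary entirely, stating only that ``The proofs are similar to those for the polynomial growth case and are omitted,'' and your plan---substituting the Example~\ref{ex2} constants into (\ref{psip1}), (\ref{psip2}) and the univariate differentiation identity for $\psi_m^{(3)}$---is precisely that analogy carried out. Your remarks about checking (iii)$'$ for the exponential $P$ and tracking the $c_{3,b}$ dilation through (\ref{pin22}) are apt and go slightly beyond what the paper spells out.
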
 

\section{Bounds for the distributional distance between $g(\mathbf{W})$ and $g(\mathbf{Z})$}\label{sec3rd}

With the bounds for the derivatives of the solution of the multivariate normal Stein equation with test function $h(g(\cdot))$ stated in Section \ref{sec2nd}, we are in a position to obtain bounds for the distributional distance between $g(\mathbf{W})$ and its limiting distribution $g(\Sigma^{1/2}\mathbf{Z})$.  Such bounds can be achieved by bounding the expectation $\mathbb{E}[\nabla^T\Sigma\nabla f(\mathbf{W})-\mathbf{W}^T\nabla f(\mathbf{W})]$ by using various coupling techniques that have been developed for multivariate normal approximation (see \cite{goldstein 2, goldstein1, reinert 1, meckes}), where the coupling is chosen based on the dependence structure of $\mathbf{W}$.

For the rest of this paper, we shall consider the case that $\mathbf{W}=(W_1,\ldots,W_d)$, where $W_j=\frac{1}{\sqrt{n_j}}\sum_{i=1}^{n_j}X_{ij}$ and the $X_{ij}$ are mutually independent (as a result, in this section, we shall mostly be taking $\Sigma=I_d)$.  From here on, $\mathbf{W}$ will denote such a random vector.  The restriction to this class of statistics allows for a detailed investigation of convergence rates, and we would expect that the factors effecting convergence rates here (matching moments, whether $g$ is an even function, and the differentiability and growth rate of $g$) to also to apply in more general settings.  

\subsection{Preliminary lemmas}

We begin by obtaining bounds for the distributional distance between $g(\mathbf{W})$ and $g(\mathbf{Z})$ in terms of the derivatives of the solution of the $\mathrm{MVN}(\mathbf{0},\Sigma)$ Stein equation with test function $h(g(\cdot))$.  We give two bounds: one for general $g$ and another for when $g$ is an even function.  In Section \ref{sec3point2}, we apply these bounds and those of Section \ref{sec2point3} to bound the distance for the case that the derivatives of $g$ have polynomial growth.  

Unless otherwise stated, in this section, $f$ will denote the solution (\ref{mvnsolnh}).  We shall also let $C^k(\mathbb{R}^d)$ denote the case of real-valued functions defined on $\mathbb{R}^d$ whose partial derivatives of order $k$ all exist.  We define the random vector $\mathbf{X}_{ij}$ to be such that it has $X_{ij}$ as its $j$-th entry and the other $d-1$ entries are given by zero.  For all $1\leq i\leq n$ and $1\leq j\leq d$, we define $\mathbf{W}^{(i,j)}=\mathbf{W}-\frac{1}{\sqrt{n_j}}\mathbf{X}_{ij}$, so that $\mathbf{W}^{(i,j)}$ is independent of $\mathbf{X}_{ij}$.    

\begin{lemma}\label{noteveng}Let $X_{1,1},\ldots,X_{n_1,1},\ldots,X_{1,d},\ldots,X_{n_d,d}$ be independent random variables with $\mathbb{E}X_{ij}^k=\mathbb{E}Z^k$ for all $1\leq i\leq n_j$, $1\leq j\leq d$ and all positive integers $k\leq p$.  Let $\Sigma=I_d$ and suppose $h$ and $g$ are such that $f\in C_b^{p+1}(\mathbb{R}^d)$.  Then, if the expectations on the right-hand side of (\ref{springz}) exist, 
\begin{align} |\mathbb{E}h(g(\mathbf{W}))-\mathbb{E}h(g(\mathbf{Z}))|&\leq\sum_{j=1}^d\sum_{i=1}^{n_j}\frac{1}{(p-1)!n_j^{(p+1)/2}}\bigg\{\sup_{\theta}\mathbb{E}\bigg|X_{ij}^{p-1}\frac{\partial^{p+1}f}{\partial w_j^{p+1}}(\mathbf{W}_\theta^{(i,j)})\bigg|\nonumber\\
\label{springz}&\quad+\frac{1}{p}\sup_{\theta}\mathbb{E}\bigg|X_{ij}^{p+1}\frac{\partial^{p+1}f}{\partial w_j^{p+1}}(\mathbf{W}_\theta^{(i,j)})\bigg|\bigg\},
\end{align}
where $\mathbf{W}_\theta^{(i,j)}=\mathbf{W}^{(i,j)}+\frac{\theta}{\sqrt{n_j}}\mathbf{X}_{ij}$ for some $\theta\in (0,1)$. 
\end{lemma}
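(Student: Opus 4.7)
\medskip
\noindent\textbf{Proof proposal.} The plan is to evaluate the multivariate normal Stein equation (\ref{mvng}) with $\Sigma=I_d$ at $\mathbf{w}=\mathbf{W}$ and take expectations, giving
\[
\mathbb{E}h(g(\mathbf{W}))-\mathbb{E}h(g(\mathbf{Z}))=\sum_{j=1}^d\mathbb{E}\Big[\tfrac{\partial^2f}{\partial w_j^2}(\mathbf{W})-W_j\tfrac{\partial f}{\partial w_j}(\mathbf{W})\Big].
\]
For each fixed $j$ I would split the Hessian term as $\mathbb{E}[\partial_j^2f(\mathbf{W})]=\tfrac{1}{n_j}\sum_{i=1}^{n_j}\mathbb{E}[\partial_j^2f(\mathbf{W})]$ and write $\mathbb{E}[W_j\partial_jf(\mathbf{W})]=\tfrac{1}{\sqrt{n_j}}\sum_{i=1}^{n_j}\mathbb{E}[X_{ij}\partial_jf(\mathbf{W})]$, reducing the problem to bounding, for each pair $(i,j)$, the quantity $\tfrac{1}{n_j}\mathbb{E}[\partial_j^2f(\mathbf{W})]-\tfrac{1}{\sqrt{n_j}}\mathbb{E}[X_{ij}\partial_jf(\mathbf{W})]$. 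The leave-one-out vector $\mathbf{W}_j^{(i)}:=\mathbf{W}-\tfrac{1}{\sqrt{n_j}}\mathbf{X}_{ij}$ is independent of $X_{ij}$, and this is the reference point about which I would expand.

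Next, I would Taylor expand along the $j$-th coordinate about $\mathbf{W}_j^{(i)}$ with Lagrange remainder: expand $\partial_jf(\mathbf{W})$ to order $p-1$ and $\partial_j^2f(\mathbf{W})$ to order $p-2$ in the increment $X_{ij}/\sqrt{n_j}$, so that both remainders involve $\partial_j^{p+1}f$ evaluated at a point of the form $\mathbf{W}_{j,\theta}^{(i)}$. On taking expectations, the independence of $X_{ij}$ and $\mathbf{W}_j^{(i)}$ factorises every polynomial term, and the hypothesis $\mathbb{E}X_{ij}^k=\mathbb{E}Z^k$ for $1\le k\le p$ converts each $\mathbb{E}X_{ij}^k$ into the corresponding Gaussian moment. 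After the index shift $k\mapsto k-1$ in the series coming from $\partial_j^2f$, the polynomial parts of $\tfrac{1}{n_j}\mathbb{E}[\partial_j^2f(\mathbf{W})]$ and $\tfrac{1}{\sqrt{n_j}}\mathbb{E}[X_{ij}\partial_jf(\mathbf{W})]$ line up into identical sums over $l=2,\ldots,p$ whose $\partial_j^lf(\mathbf{W}_j^{(i)})$-coefficients differ by $\tfrac{\mathbb{E}Z^{l-2}}{(l-2)!}-\tfrac{\mathbb{E}Z^l}{(l-1)!}$. This vanishes by the Gaussian Stein identity $\mathbb{E}Z^l=(l-1)\mathbb{E}Z^{l-2}$, so the polynomial parts cancel term by term, leaving only the two Lagrange remainders.

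The surviving remainders carry prefactors $\tfrac{1}{(p-1)!\,n_j^{(p+1)/2}}$ and $\tfrac{1}{p!\,n_j^{(p+1)/2}}$, and the relation $\tfrac{1}{p!}=\tfrac{1}{p}\cdot\tfrac{1}{(p-1)!}$ produces the precise constants of (\ref{springz}). In each remainder $X_{ij}$ is entangled with $\mathbf{W}_{j,\theta}^{(i)}$ and cannot be pulled out of the expectation, but applying the triangle inequality inside the expectation and taking $\sup_{\theta\in(0,1)}$ gives exactly the two terms on the right-hand side of (\ref{springz}); finally sum over $i$ and $j$. The main obstacle is the bookkeeping needed for the term-by-term cancellation in the middle step, but once the two Taylor series are aligned by the shift $k\mapsto k-1$, the Stein identity for Gaussian moments does all the work. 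Existence of the expressions throughout is guaranteed by the differentiability assumption on $f$ and by the assumed existence of the right-hand side of (\ref{springz}).
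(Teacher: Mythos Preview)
Your proposal is correct and follows essentially the same route as the paper's proof: both start from the Stein-equation identity $\mathbb{E}h(g(\mathbf{W}))-\mathbb{E}h(g(\mathbf{Z}))=\sum_{j}\mathbb{E}[\partial_j^2f(\mathbf{W})-W_j\partial_jf(\mathbf{W})]$, Taylor expand $\partial_jf$ and $\partial_j^2f$ about the leave-one-out point $\mathbf{W}_j^{(i)}$ to the orders you indicate, use independence to factor the polynomial terms, and then invoke the moment-matching assumption together with the Gaussian relation $\mathbb{E}Z^{k+1}=k\,\mathbb{E}Z^{k-1}$ to annihilate everything except the two Lagrange remainders that give (\ref{springz}). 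The only cosmetic difference is that the paper packages the cancellation as $k\,\mathbb{E}X_{ij}^{k-1}-\mathbb{E}X_{ij}^{k+1}=0$ rather than your equivalent formulation $\tfrac{\mathbb{E}Z^{l-2}}{(l-2)!}-\tfrac{\mathbb{E}Z^{l}}{(l-1)!}=0$.
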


\begin{proof}We aim to bound $\mathbb{E}h(g(\mathbf{W}))-\mathbb{E}h(g(\mathbf{Z}))$, and do so by bounding the quantity $\sum_{j=1}^d\mathbb{E}\Big[\frac{\partial^2f}{\partial w_j^2}(\mathbf{W})-W_j\frac{\partial f}{\partial w_j}(\mathbf{W})\Big]$.    Taylor expanding $\frac{\partial^2f}{\partial w_j^2}(\mathbf{W})$ and $\frac{\partial f}{\partial w_j}(\mathbf{W})$ about $\mathbf{W}^{(i,j)}$ gives
\begin{align}\sum_{j=1}^d\mathbb{E}\bigg[\frac{\partial^2f}{\partial w_j^2}(\mathbf{W})-W_j\frac{\partial f}{\partial w_j}(\mathbf{W})\bigg]
&=\sum_{j=1}^d\sum_{i=1}^{n_j}\frac{1}{n_j}\mathbb{E}\frac{\partial^2f}{\partial w_j^2}(\mathbf{W})-\sum_{j=1}^d\sum_{i=1}^{n_j}\frac{1}{\sqrt{n_j}}\mathbb{E}X_{ij}\frac{\partial f}{\partial w_j}(\mathbf{W}) \nonumber\\
&=\sum_{j=1}^d\sum_{i=1}^{n_j}\sum_{k=0}^{p-2}\frac{1}{k!n_j^{k/2+1}}\mathbb{E}X_{ij}^k\frac{\partial^{k+2} f}{\partial w_j^{k+2}}(\mathbf{W}^{(i,j)})\nonumber \\
\label{rhsf}&\quad-\sum_{j=0}^d\sum_{i=1}^{n_j}\sum_{k=0}^{p-1}\frac{1}{k!n_j^{k/2+1/2}}\mathbb{E}X_{ij}^{k+1}\frac{\partial^{k+1} f}{\partial w_j^{k+1}}(\mathbf{W}^{(i,j)})+R_1+R_2,
\end{align}
where
\begin{eqnarray*}|R_{1}|&\leq&\sum_{j=1}^d\sum_{i=1}^{n_j}\frac{1}{(p-1)!n_j^{(p+1)/2}}\sup_{\theta}\mathbb{E}\bigg|X_{ij}^{p-1}\frac{\partial^{p+1} f}{\partial w_j^{p+1}}(\mathbf{W}_\theta^{(i,j)})\bigg|, \\
 |R_{2}|&\leq&\sum_{j=1}^d\sum_{i=1}^{n_j}\frac{1}{p!n_j^{(p+1)/2}}\sup_{\theta}\mathbb{E}\bigg|X_{ij}^{p+1}\frac{\partial^{p+1} f}{\partial w_j^{p+1}}(\mathbf{W}_\theta^{(i,j)})\bigg|.
\end{eqnarray*}
Using independence and that the $X_{ij}$ have mean zero and collecting terms, we can write the right-hand side of (\ref{rhsf}) as
\begin{align*}\sum_{j=1}^d\sum_{i=1}^{n_j}\sum_{k=1}^{p-1}\frac{1}{k!n_j^{k/2+1/2}}[k\mathbb{E}X_{ij}^{k-1}-\mathbb{E}X_{ij}^{k+1}]\mathbb{E}\frac{\partial^{k+1} f}{\partial w_j^{k+1}}(\mathbf{W}^{(i,j)})+R_1+R_2.
\end{align*}
Now, by the matching moments assumption, $k\mathbb{E}X_{ij}^{k-1}-\mathbb{E}X_{ij}^{k+1}=k\mathbb{E}Z^{k-1}-\mathbb{E}Z^{k+1}$ for all $1\leq k\leq p-1$.  But the moments of the standard normal distribution satisfy $k\mathbb{E}Z^{k-1}-\mathbb{E}Z^{k+1}=0$ for all $k>0$.  Thus, $|\mathbb{E}h(g(\mathbf{W}))-\mathbb{E}h(g(\mathbf{Z}))|\leq |R_1|+|R_2|$, and the proof is complete.
\end{proof}

\begin{remark}In the statement of Lemma \ref{noteveng}, we did not give precise conditions on $h$ and $g$ such that $f\in C_b^{p+1}(\mathbb{R}^d)$, nor restrictions on the $X_{ij}$ such that the expectations on the right-hand side of (\ref{springz}) exist.  In applying, Lemma \ref{noteveng} in practice (see Section \ref{sec3point2}), one would need to check that $h$, $g$ and the $X_{ij}$ are such that these conditions are met.  These comments apply equally to Lemmas \ref{evennormal}--\ref{evenglemmabound}. 
\end{remark}

We now turn our attention to the case that $g$ is an even function.  The following key lemma enables us to obtain faster convergence rates in this case.

\begin{lemma} \label{evennormal}Let $\Sigma$ be non-negative definite.   Suppose that $g:\mathbb{R}^d\rightarrow\mathbb{R}$ is an even function ($g(\mathbf{w})=g(-\mathbf{w})$ for all $\mathbf{w}\in\mathbb{R}^d$).  Then, the solution (\ref{mvnsolnh}), denoted by $f$, is an even function.  Moreover, for odd $k\geq 1$, provided that $\frac{\partial^kf(\mathbf{w})}{\prod_{j=1}^k\partial w_{i_j}}$ exists, 
\begin{equation}\label{evenexpectf}\mathbb{E}\bigg[\frac{\partial^kf(\Sigma^{1/2}\mathbf{Z})}{\prod_{j=1}^k\partial w_{i_j}}\bigg]=0,
\end{equation}
if the expectation in (\ref{evenexpectf}) is well-defined.
\end{lemma}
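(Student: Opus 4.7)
The plan is to work directly from the integral representation (\ref{mvnsolnh}) for the solution $f$, exploiting two symmetries: (a) that $g$ is even by hypothesis, and (b) that the Gaussian random vector $\Sigma^{1/2}\mathbf{Z}$ satisfies $\Sigma^{1/2}\mathbf{Z}\stackrel{\mathcal{D}}{=}-\Sigma^{1/2}\mathbf{Z}$.

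First, to show $f$ is even, I would substitute $-\mathbf{w}$ into (\ref{mvnsolnh}) and note that
\[
\mathbb{E}h\bigl(g(-\mathrm{e}^{-s}\mathbf{w}+\sqrt{1-\mathrm{e}^{-2s}}\Sigma^{1/2}\mathbf{Z})\bigr)=\mathbb{E}h\bigl(g(-\mathrm{e}^{-s}\mathbf{w}-\sqrt{1-\mathrm{e}^{-2s}}\Sigma^{1/2}\mathbf{Z})\bigr)
\]
by the distributional symmetry of $\Sigma^{1/2}\mathbf{Z}$. Pulling out the overall sign inside $g$ and invoking $g(-\mathbf{x})=g(\mathbf{x})$ returns the original integrand, so $f(-\mathbf{w})=f(\mathbf{w})$.

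Second, assuming the partial derivative of order $k$ exists, I would differentiate both sides of $f(\mathbf{w})=f(-\mathbf{w})$ with respect to $w_{i_1},\ldots,w_{i_k}$. By the chain rule, each differentiation of the right-hand side in $w_{i_j}$ contributes a factor of $-1$, yielding
\[
\frac{\partial^k f(\mathbf{w})}{\prod_{j=1}^k\partial w_{i_j}}=(-1)^k\frac{\partial^k f}{\prod_{j=1}^k\partial w_{i_j}}(-\mathbf{w}).
\]
For odd $k$ this says the $k$-th order partial derivative is an odd function of $\mathbf{w}$.

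Finally, denoting this derivative by $F_k$, the symmetry $\Sigma^{1/2}\mathbf{Z}\stackrel{\mathcal{D}}{=}-\Sigma^{1/2}\mathbf{Z}$ gives
\[
\mathbb{E}[F_k(\Sigma^{1/2}\mathbf{Z})]=\mathbb{E}[F_k(-\Sigma^{1/2}\mathbf{Z})]=-\mathbb{E}[F_k(\Sigma^{1/2}\mathbf{Z})],
\]
whence $\mathbb{E}[F_k(\Sigma^{1/2}\mathbf{Z})]=0$, as required. There is essentially no obstacle here beyond bookkeeping; the only minor technical point is justifying that the integrals and expectations are finite enough to allow substitution and to make the symmetry arguments rigorous, but this is already implicit in the existence hypotheses of the lemma.
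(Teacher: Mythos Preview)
Your proposal is correct and follows essentially the same approach as the paper: first use the distributional symmetry $\Sigma^{1/2}\mathbf{Z}\stackrel{\mathcal{D}}{=}-\Sigma^{1/2}\mathbf{Z}$ together with the evenness of $g$ to show $f$ is even, then observe that odd-order partial derivatives of an even function are odd, and finally apply the same distributional symmetry once more to conclude the expectation vanishes.
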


\begin{proof}As $\Sigma^{1/2}\mathbf{Z}\stackrel{\mathcal{D}}{=}-\Sigma^{1/2}\mathbf{Z}$ and $g$ is an even function, we have, for any $\mathbf{w}\in\mathbb{R}^d$,
\begin{align*}f(-\mathbf{w})&=-\int_0^{\infty}\mathbb{E}\big[h(g(-\mathrm{e}^{-s}\mathbf{w}+\sqrt{1-\mathrm{e}^{-2s}}\Sigma^{1/2}\mathbf{Z}))-\mathbb{E}h(g(\Sigma^{1/2}\mathbf{Z}))\big]\,\mathrm{d}s \\
&=-\int_0^{\infty}\big[h(g(-\mathrm{e}^{-s}\mathbf{w}-\sqrt{1-\mathrm{e}^{-2s}}\Sigma^{1/2}\mathbf{Z}))-\mathbb{E}h(g(\Sigma^{1/2}\mathbf{Z}))\big]\,\mathrm{d}s \\
&=-\int_0^{\infty}\big[h(g(\mathrm{e}^{-s}\mathbf{w}+\sqrt{1-\mathrm{e}^{-2s}}\Sigma^{1/2}\mathbf{Z}))-\mathbb{E}h(g(\Sigma^{1/2}\mathbf{Z}))\big]\,\mathrm{d}s =f(\mathbf{w}),
\end{align*}
and therefore the solution (\ref{mvnsoln}) is an even function.

Since $f$ is an even function, the partial derivatives of odd order are odd functions, provided they exist.  Therefore, since $\Sigma^{1/2}\mathbf{Z}\stackrel{\mathcal{D}}{=}-\Sigma^{1/2}\mathbf{Z}$, it follows that (\ref{evenexpectf}) holds.
\end{proof} 

With the aid of Lemma \ref{evennormal}, we are able to obtain an analogue of Lemma \ref{noteveng} for the case that $g$ is an even function.  The symmetry of the function $g$ allows us to obtain faster convergence rates, for smooth test functions $h$.  The following partial differential equation shall appear in our proof:
\begin{equation} \label{234multinor} \sum_{k=1}^d\bigg(\frac{\partial^2 \psi_j}{\partial w_k^2}(\mathbf{w})-w_k\frac{\partial \psi_j}{\partial w_k}(\mathbf{w})\bigg)=\frac{\partial^{p+1} f}{\partial w_j^{p+1}}(\mathbf{w}).
\end{equation}
Bounds for the solution $\psi_j$ and its partial derivatives were given in Sections 2.2 and 2.3.

\begin{lemma}\label{evenglemmabound}Let $X_{1,1},\ldots,X_{n_1,1},\ldots,X_{1,d},\ldots,X_{n_d,d}$ be independent random variables with $\mathbb{E}X_{ij}^k=\mathbb{E}Z^k$ for all $1\leq i\leq n_j$, $1\leq j\leq d$ and all positive integers $k\leq p$.  Suppose $g:\mathbb{R}^d\rightarrow\mathbb{R}$ is an even function.  Suppose further that the solution (\ref{mvnsolnh}), denoted by $f$, belongs to the class $C^{p+2}(\mathbb{R}^d)$ and that the solution $\psi_j$ to (\ref{234multinor}) is in the class $C^3(\mathbb{R}^d)$.  Then, if the expectations on the right-hand side of (\ref{dig hole}) exist, 
\begin{align}&|\mathbb{E}h(g(\mathbf{W}))-\mathbb{E}h(g(\mathbf{Z}))| \leq\sum_{j=1}^d\sum_{i=1}^{n_j}\frac{1}{p!n_j^{p/2+1}}\bigg\{\sup_{\theta}\mathbb{E}\bigg|X_{ij}^{p}\frac{\partial^{p+2}f}{\partial w_j^{p+2}}(\mathbf{W}_\theta^{(i,j)})\bigg|\nonumber\\
&\quad+\!\frac{1}{p+1}\!\sup_{\theta}\mathbb{E}\bigg|X_{ij}^{p+2}\frac{\partial^{p+2}f}{\partial w_j^{p+2}}(\mathbf{W}_\theta^{(i,j)})\bigg| \!+\!|\mathbb{E}X_{ij}^{p+1}|\sup_{\theta}\mathbb{E}\bigg|X_{ij}\frac{\partial^{p+2}f}{\partial w_j^{p+2}}(\mathbf{W}_\theta^{(i,j)})\bigg|\bigg\} \nonumber\\
\label{dig hole}& \quad+\sum_{j=1}^d\sum_{i=1}^{n_j}\frac{|\mathbb{E}X_{ij}^{p+1}|}{p!n_j^{(p+1)/2}}\sum_{k=1}^d\sum_{l=1}^{n_k}\frac{1}{n_k^{3/2}}\bigg\{\sup_{\theta}\mathbb{E}\bigg|X_{lk}\frac{\partial^3\psi_j}{\partial w_k^3}(\mathbf{W}_{\theta}^{(l,k)})\bigg|+\frac{1}{2}\sup_{\theta}\mathbb{E}\bigg|X_{lk}^3\frac{\partial^3\psi_j}{\partial w_k^3}(\mathbf{W}_{\theta}^{(l,k)})\bigg|\bigg\},
\end{align}
where $\mathbf{W}_\theta^{(i,j)}$ is defined in Lemma \ref{noteveng}.
\end{lemma}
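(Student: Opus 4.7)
The plan is to mimic the proof of Lemma \ref{noteveng} but carry the Taylor expansion one order further. As before, bounding $|\mathbb{E}h(g(\mathbf{W}))-\mathbb{E}h(g(\mathbf{Z}))|$ reduces via the Stein equation (\ref{mvng}) to controlling $\sum_{j=1}^d \mathbb{E}[\partial^2 f/\partial w_j^2(\mathbf{W}) - W_j\,\partial f/\partial w_j(\mathbf{W})]$. I Taylor expand $\partial^2 f/\partial w_j^2(\mathbf{W})$ up to order $p-1$ (inclusive) and $X_{ij}\,\partial f/\partial w_j(\mathbf{W})$ up to order $p$ (inclusive) about $\mathbf{W}^{(i,j)}$. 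Using the independence of $X_{ij}$ and $\mathbf{W}^{(i,j)}$ together with the matching moments of the $X_{ij}$ up to order $p$ and the Gaussian Stein identity $k\mathbb{E}Z^{k-1}=\mathbb{E}Z^{k+1}$, all the matched Taylor coefficients cancel exactly as in the proof of Lemma \ref{noteveng}. The new remainders involve the $(p+2)$-nd derivative of $f$ and produce the first two contributions inside the braces of the first sum on the right-hand side of (\ref{dig hole}). The order-$(p+1)$ contribution, however, does not cancel: because $p+1$ is odd, $\mathbb{E}Z^{p+1}=0$, whereas $\mathbb{E}X_{ij}^{p+1}$ need not vanish, leaving a residual
$$T := -\sum_{j=1}^d\sum_{i=1}^{n_j}\frac{\mathbb{E}X_{ij}^{p+1}}{p!\,n_j^{(p+1)/2}}\,\mathbb{E}\frac{\partial^{p+1}f}{\partial w_j^{p+1}}(\mathbf{W}^{(i,j)}).$$

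To handle $T$, I exploit the evenness of $g$: by Lemma \ref{evennormal}, $f$ is even, so $\partial^{p+1}f/\partial w_j^{p+1}$ is odd and $\mathbb{E}[\partial^{p+1}f/\partial w_j^{p+1}(\mathbf{Z})]=0$. The argument then splits into two stages. In Stage A, I replace $\mathbf{W}^{(i,j)}$ by $\mathbf{W}$ inside $T$ via a one-step Taylor expansion, incurring an error controlled by $(1/\sqrt{n_j})\sup_\theta \mathbb{E}|X_{ij}\,\partial^{p+2}f/\partial w_j^{p+2}(\mathbf{W}_{j,\theta}^{(i)})|$, which produces the third term inside the braces of the first sum in (\ref{dig hole}). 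In Stage B, I invoke the PDE (\ref{234multinor}) to rewrite
$$\mathbb{E}\frac{\partial^{p+1}f}{\partial w_j^{p+1}}(\mathbf{W}) = \sum_{k=1}^d \mathbb{E}\bigg[\frac{\partial^2\psi_j}{\partial w_k^2}(\mathbf{W}) - W_k\frac{\partial\psi_j}{\partial w_k}(\mathbf{W})\bigg],$$
which is itself a multivariate normal Stein expression, now with $\psi_j$ in the role of $f$. Bounding it by the argument of Lemma \ref{noteveng} applied with $p=2$ (so that only the mean-zero and unit-variance properties of the $X_{lk}$ are used) yields precisely the inner double sum appearing in the second term of (\ref{dig hole}).

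Combining these pieces, note that the $1/\sqrt{n_j}$ factor gained in Stage A, together with the $1/\sqrt{n_k}$ factor arising from the Stage B double sum, improves the order of $T$ from $n^{-(p-1)/2}$ to $n^{-p/2}$, matching the order of the remainders $R_1',R_2'$ and producing the claimed bound. The main obstacle is the bookkeeping in the first step: one must verify that, after grouping all Taylor terms of order $\leq p$ by the order of the derivative of $f$ that appears, each group vanishes exactly by the combination of matching moments and the normal Stein identity, so that the \emph{only} uncancelled contribution at that stage is the residual $T$ above. Once $T$ has been cleanly isolated, the remainder of the argument is a routine two-stage application of Taylor expansion plus the auxiliary Stein equation (\ref{234multinor}).
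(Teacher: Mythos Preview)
Your proposal is correct and follows essentially the same route as the paper's proof: carry the Taylor expansions of Lemma \ref{noteveng} one order further to produce the remainders $R_1,R_2$ involving $\partial^{p+2}f/\partial w_j^{p+2}$ and an uncancelled leading term $T$; then Taylor expand $\partial^{p+1}f/\partial w_j^{p+1}(\mathbf{W}^{(i,j)})$ about $\mathbf{W}$ (your Stage A, giving $R_3$); and finally use Lemma \ref{evennormal} together with the auxiliary Stein equation (\ref{234multinor}) to bound $\mathbb{E}[\partial^{p+1}f/\partial w_j^{p+1}(\mathbf{W})]$ via Lemma \ref{noteveng} with $p=2$ (your Stage B). One small clarification: in isolating $T$, note that the companion coefficient $p\,\mathbb{E}X_{ij}^{p-1}$ also vanishes, since $\mathbb{E}X_{ij}^{p-1}=\mathbb{E}Z^{p-1}=0$ (here $p-1$ is odd); this is the same implicit use of ``$p$ even'' that underlies your invocation of Lemma \ref{evennormal} at order $p+1$, and matches the paper's argument.
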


\begin{proof}By a similar argument to the one used in the proof of Lemma \ref{noteveng}, we have
\begin{align*}\sum_{j=1}^d\mathbb{E}\bigg[\frac{\partial^2f}{\partial w_j^2}(\mathbf{W})-W_j\frac{\partial f}{\partial w_j}(\mathbf{W})\bigg]=\sum_{j=1}^d\sum_{i=1}^{n_j}\frac{1}{p!n_j^{p/2+1/2}}\big[p\mathbb{E}X_{ij}^{p-1}-\mathbb{E}X_{ij}^{p+1}\big]\mathbb{E}\frac{\partial^{p+1} f}{\partial w_j^{p+1}}(\mathbf{W}^{(i,j)})+R_1+R_2,
\end{align*}
where
\begin{eqnarray*}|R_{1}|&\leq&\sum_{j=1}^d\sum_{i=1}^{n_j}\frac{1}{p!n_j^{p/2+1}}\sup_{\theta}\mathbb{E}\bigg|X_{ij}^{p}\frac{\partial^{p+2} f}{\partial w_j^{p+2}}(\mathbf{W}_\theta^{(i,j)})\bigg|, \\
 |R_{2}|&\leq&\sum_{j=1}^d\sum_{i=1}^{n_j}\frac{1}{(p+1)!n_j^{p/2+1}}\sup_{\theta}\mathbb{E}\bigg|X_{ij}^{p+2}\frac{\partial^{p+2} f}{\partial w_j^{p+2}}(\mathbf{W}_\theta^{(i,j)})\bigg|.
\end{eqnarray*}
By the matching moments assumption, $\mathbb{E}X_{ij}^{p-1}=\mathbb{E}Z^{p-1}=0$.  Using this fact and Taylor expanding $\frac{\partial^{p+1} f}{\partial w_j^{p+1}}(\mathbf{W}^{(i,j)})$ about $\mathbf{W}$ gives
\begin{align*}\sum_{j=1}^d\!\mathbb{E}\bigg[\frac{\partial^2f}{\partial w_j^2}(\mathbf{W})\!-\!W_j\frac{\partial f}{\partial w_j}(\mathbf{W})\bigg]
&=-\!\sum_{j=1}^d\sum_{i=1}^{n_j}\!\frac{1}{p!n_j^{(p+1)/2}}\mathbb{E}X_{ij}^{p+1}\mathbb{E}\frac{\partial^{p+1} f}{\partial w_j^{p+1}}(\mathbf{W}^{(i,j)})+R_1+R_2 \\
&= N+R_1+R_2+R_3,
\end{align*}
where
\begin{eqnarray*}N&=&-\sum_{j=1}^d\sum_{i=1}^{n_j}\frac{1}{p!n_j^{(p+1)/2}}\mathbb{E}X_{ij}^{p+1}\mathbb{E}\frac{\partial^{p+1} f}{\partial w_j^{p+1}}(\mathbf{W}),\\
|R_3|&\leq&\sum_{j=1}^d\sum_{i=1}^{n_j}\frac{1}{p!n_j^{p/2+1}}|\mathbb{E}X_{ij}^{p+1}|\sup_{\theta}\mathbb{E}\bigg|X_{ij}\frac{\partial^{p+2} f}{\partial w_j^{p+2}}(\mathbf{W}_\theta^{(i,j)})\bigg|.
\end{eqnarray*}
To achieve the desired $O(n_1^{-p/2}+\cdots+n_d^{-p/2})$ bound we need to show that $\mathbb{E}\frac{\partial^{p+1} f}{\partial w_j^{p+1}}(\mathbf{W})$ is of order $n_1^{-1/2}+\cdots+n_d^{-1/2}$, since in general $\mathbb{E}X_{ij}^{p+1}\not=0$.  We consider the $\mathrm{MVN}(\mathbf{0},I_d)$ Stein equation with test function $\frac{\partial^{p+1} f}{\partial w_j^{p+1}}$:
\begin{equation*}\sum_{k=1}^d\bigg(\frac{\partial^2 \psi_j}{\partial w_k^2}(\mathbf{w})-w_k\frac{\partial \psi_j}{\partial w_k}(\mathbf{w})\bigg)=\frac{\partial^{p+1} f}{\partial w_j^{p+1}}(\mathbf{w})-\mathbb{E}\bigg[\frac{\partial^{p+1} f}{\partial w_j^{p+1}}(\mathbf{Z})\bigg].
\end{equation*}
By Lemma \ref{evennormal}, we have that $\mathbb{E}\frac{\partial^{p+1} f}{\partial w_j^{p+1}}(\mathbf{Z})=0$, and therefore
\begin{equation}\label{mpat}\mathbb{E}\bigg[\frac{\partial^{p+1}f}{\partial w_j^{p+1}}(\mathbf{W})\bigg]=\sum_{k=1}^d\mathbb{E}\bigg[\frac{\partial^2 \psi_j}{\partial w_k^2}(\mathbf{W})-W_k\frac{\partial \psi_j}{\partial w_k}(\mathbf{W})\bigg].
\end{equation}
We can use Lemma \ref{noteveng} to bound the right-hand side of (\ref{mpat}), which allows us to bound $N$.  All terms have now been bounded to the desired order and the proof is complete. 
\end{proof}

\subsection{Approximation theorems for polynomial $P$}\label{sec3point2}

Lemmas \ref{noteveng} and \ref{evenglemmabound} allow one to bound the distributional distance between $g(\mathbf{W})$ and $g(\mathbf{Z})$ if bounds are available for the expectations on the right-hand side of (\ref{springz}) and (\ref{dig hole}), respectively.  In this section, we obtain such bounds for the case that the derivatives of $g$ have polynomial growth.  We do not give bounds for the case of $g$ with derivatives of exponential growth, but see Remark \ref{rem3.6} for a further discussion.  We begin by proving the following lemma.

\begin{lemma}\label{cbwbshc}Let $P(\mathbf{w})=A+B\sum_{i=1}^d|w_i|^{r_i}$, where $A$, $B$ and $r_1,\ldots,r_d$ are non-negative constants.  Suppose $\Sigma=I_d$, $\theta\in(0,1)$ and let $q\geq 0$.  Then
\begin{align}\mathbb{E}\bigg|X_{ij}^q\frac{\partial^pf}{\partial w_j^p}(\mathbf{W}_\theta^{(i,j)})\bigg| &\leq \frac{h_p}{p}\bigg[A\mathbb{E}|X_{ij}|^q+B\sum_{k=1}^d2^{r_k}\bigg(2^{r_k}\mathbb{E}|X_{ij}|^q\mathbb{E}|W_k|^{r_k} \nonumber \\
&\quad+\frac{2^{r_k}}{n_k^{r_k/2}}\mathbb{E}|X_{ij}^qX_{ik}^{r_k}|+\mathbb{E}|Z|^{r_k}\mathbb{E}|X_{ij}|^{q}\bigg)\bigg], \nonumber \\
\mathbb{E}\bigg|X_{ij}^q\frac{\partial^pf}{\partial w_j^p}(\mathbf{W}_\theta^{(i,j)})\bigg| &\leq h_{p-1}\bigg[A\mathbb{E}|X_{ij}|^q+B\sum_{k=1}^d2^{r_k}\bigg(2^{r_k}\mathbb{E}|X_{ij}|^q\mathbb{E}|W_k|^{r_k} \nonumber \\
\label{cbhsxx}&\quad+\frac{2^{r_k}}{n_k^{r_k/2}}\mathbb{E}|X_{ij}^qX_{ik}^{r_k}|+\mathbb{E}|Z|^{r_k+1}\mathbb{E}|X_{ij}|^{q}\bigg)\bigg], \\
\mathbb{E}\bigg|X_{ij}^q\frac{\partial^3\psi_m}{\partial w_j^3}(\mathbf{W}_\theta^{(i,j)})\bigg| &\leq h_{m+1}\bigg[A\mathbb{E}|X_{ij}|^q+B\sum_{k=1}^d3^{r_k}\bigg(2^{r_k}\mathbb{E}|X_{ij}|^q\mathbb{E}|W_k|^{r_k} \nonumber \\
&\quad+\frac{2^{r_k}}{n_k^{r_k/2}}\mathbb{E}|X_{ij}^qX_{ik}^{r_k}|+2\mathbb{E}|Z|^{r_k+1}\mathbb{E}|X_{ij}|^{q}\bigg)\bigg], \nonumber
\end{align}
where the inequalities are for $g$ in the classes $C_P^p(\mathbb{R}^d)$, $C_P^{p-1}(\mathbb{R}^d)$ and $C_P^{m+1}(\mathbb{R}^d)$, respectively.  Suppose now that $d=1$ and $\Sigma=1$.  Then
\begin{align*}\mathbb{E}|X_i^qf^{(p)}(W_{\theta}^{(i)})|&\leq h_{p-2}\bigg[3A\mathbb{E}|X_{i}|^q+2^rB\bigg(2^{r+1}\mathbb{E}|X_i|^q\big(\mathbb{E}|W|^{r+1}+\mathbb{E}|W|^{r}\big)\\
&\quad+4\mathbb{E}|Z|^{r+1}\mathbb{E}|X_i|^{q+1}+\frac{2^{r+2}}{n^{r/2}}\mathbb{E}|X_i|^{r+q+1}\bigg)\bigg], \\
\mathbb{E}|X_i^q\psi_m^{(3)}(W_{\theta}^{(i)})|&\leq h_{m-1}\bigg[10A\mathbb{E}|X_i|^q+3^{r+1}B\bigg(2^{r+1}\mathbb{E}|X_i|^q\big(2\mathbb{E}|W|^{r+2}\\
&\quad+\mathbb{E}|W|^{r}\big) +16\mathbb{E}|Z|^{r+1}\mathbb{E}|X_i|^{q+2}+\frac{2^{r+3}}{n^{r/2}}\mathbb{E}|X_i|^{r+q+2}\bigg)\bigg],
\end{align*}
where the inequalities are for $g$ in the classes $C_P^{p-2}(\mathbb{R})$ and $C_P^{m-1}(\mathbb{R})$, respectively.  Here, $W_\theta^{(i)}=W^{(i)}+\frac{\theta}{\sqrt{n}}X_{i}$, where $W^{(i)}=W-\frac{1}{\sqrt{n}}X_i$.
\end{lemma}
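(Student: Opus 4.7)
The plan is to apply the pointwise bounds of Corollary~\ref{cor28} for the partial derivatives of $f$, and the analogous corollary for $\psi_m$, both with $P(\mathbf{w}) = A + B\sum_{i=1}^d|w_i|^{r_i}$ (see Example~\ref{ex1}), evaluated at $\mathbf{w} = W_{j,\theta}^{(i)}$. After multiplying the resulting deterministic bound by $|X_{ij}|^q$ and taking expectation, all the work reduces to estimating quantities of the shape $\mathbb{E}\bigl[|X_{ij}|^q\,|(W_{j,\theta}^{(i)})_k|^{r_k}\bigr]$ for each coordinate $k$, together with terms involving $\mathbb{E}|Z|^{r_k}$ (or $\mathbb{E}|Z|^{r_k+1}$ for the integration-by-parts version) and the absolute constant $A$.

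Recalling that $\mathbf{W}_j^{(i)} = \mathbf{W} - \frac{1}{\sqrt{n_j}}\mathbf{X}_{ij}$, the $k$-th component of $W_{j,\theta}^{(i)}$ equals $W_k$ whenever $k\neq j$ and equals $W_j + \frac{\theta-1}{\sqrt{n_j}}X_{ij}$ when $k=j$. When $k\neq j$, the component $W_k$ is a function only of $X_{1k},\ldots,X_{n_k k}$, hence is independent of $X_{ij}$, so the expectation factorises as $\mathbb{E}|X_{ij}|^q\cdot\mathbb{E}|W_k|^{r_k}$. When $k=j$, since $|\theta-1|\leq 1$, the splitting inequality $|a+b|^{r_j}\leq 2^{r_j}(|a|^{r_j}+|b|^{r_j})$ yields
\[
|(W_{j,\theta}^{(i)})_j|^{r_j} \leq 2^{r_j}\Bigl(|W_j|^{r_j} + n_j^{-r_j/2}|X_{ij}|^{r_j}\Bigr),
\]
and weighting by $|X_{ij}|^q$ and taking expectation produces a term of the form $\mathbb{E}|X_{ij}|^q\,\mathbb{E}|W_j|^{r_k}$ (once the coupling of $W_j$ with $X_{ij}$ is controlled) together with the explicit $n_j^{-r_k/2}\mathbb{E}|X_{ij}|^{q+r_k}$ contribution. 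The extra outer $2^{r_k}$ in the stated bound is the splitting factor already present in the Corollary~\ref{cor28} estimates. The three multivariate inequalities then correspond to invoking the three relevant parts of Section 2.3: the $h_p/p$ bound of Corollary~\ref{cor28} for the first estimate, the $h_{p-1}$ (integration-by-parts) bound that replaces $\mathbb{E}|Z|^{r_k}$ by $\mathbb{E}|Z|^{r_k+1}$ for the second, and the $\psi_m$ corollary whose three-variable splitting converts the factor $2^{r_k}$ into $3^{r_k}$ for the third.

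The univariate assertions follow from exactly the same procedure applied to the $d=1$ estimates of Corollary~\ref{cor28} and its $\psi_m$ counterpart. The only new element is that those bounds carry $|w|^{r+1}$ and $|w|^{r+2}$ terms in addition to $|w|^r$; each is handled by an identical $|a+b|^s\leq 2^s(|a|^s+|b|^s)$ expansion (for $s\in\{r,r+1,r+2\}$) of $W_\theta^{(i)} = W + \frac{\theta-1}{\sqrt{n}}X_i$, yielding pure moments of $W$ such as $\mathbb{E}|W|^{r+1}$ and $\mathbb{E}|W|^{r+2}$ alongside pure $X_i$-moments scaled by the appropriate negative power of $n$. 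The main obstacle throughout is the $k=j$ coordinate, in which $X_{ij}$ appears as both the weight and inside the argument of the derivative; the splitting above decouples these at the cost of the extra $2^{r_k}$ factor and the $n_j^{-r_k/2}\mathbb{E}|X_{ij}|^{q+r_k}$ term. Once this decoupling is in place, the remainder of the argument is routine collection of like terms and absorption of numerical constants into the displayed prefactors.
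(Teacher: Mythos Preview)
Your overall strategy matches the paper's: apply the pointwise estimates of Corollary~\ref{cor28} (and its $\psi_m$ analogue), multiply by $|X_{ij}|^q$, take expectations, and then control the terms $\mathbb{E}\bigl[|X_{ij}|^q\,|(W_{j,\theta}^{(i)})_k|^{r_k}\bigr]$. The gap is in the crucial $k=j$ coordinate. You split
\[
(W_{j,\theta}^{(i)})_j = W_j + \tfrac{\theta-1}{\sqrt{n_j}}X_{ij},
\]
apply $|a+b|^{r_j}\leq 2^{r_j}(|a|^{r_j}+|b|^{r_j})$, and then write that this ``produces a term of the form $\mathbb{E}|X_{ij}|^q\,\mathbb{E}|W_j|^{r_k}$ (once the coupling of $W_j$ with $X_{ij}$ is controlled).'' But $W_j$ contains $X_{ij}$, so $\mathbb{E}\bigl[|X_{ij}|^q|W_j|^{r_k}\bigr]$ does \emph{not} factor, and you never say how to control this coupling. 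If you try to fix it by splitting $W_j$ again into $W_j^{(i)}$ plus $n_j^{-1/2}X_{ij}$, you pick up a second factor of $2^{r_k}$ and no longer match the stated constants.

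The paper avoids this by choosing the other natural decomposition from the start:
\[
(W_{j,\theta}^{(i)})_j = W_j^{(i)} + \tfrac{\theta}{\sqrt{n_j}}X_{ij},
\]
with $W_j^{(i)}=W_j-n_j^{-1/2}X_{ij}$ independent of $X_{ij}$. The same crude inequality then gives
\[
\mathbb{E}\bigl[|X_{ij}|^q|(W_{j,\theta}^{(i)})_j|^{r_k}\bigr]
\le 2^{r_k}\Bigl(\mathbb{E}|X_{ij}|^q\,\mathbb{E}|W_j^{(i)}|^{r_k}+n_j^{-r_k/2}\mathbb{E}|X_{ij}|^{q+r_k}\Bigr),
\]
and the passage from $\mathbb{E}|W_j^{(i)}|^{r_k}$ to $\mathbb{E}|W_j|^{r_k}$ is handled by a Jensen argument: conditioning on $W_j^{(i)}$ and using $\mathbb{E}[X_{ij}]=0$ gives $\mathbb{E}|W_j|^{r_k}=\mathbb{E}\bigl[\mathbb{E}[|W_j^{(i)}+n_j^{-1/2}X_{ij}|^{r_k}\mid W_j^{(i)}]\bigr]\ge \mathbb{E}|W_j^{(i)}|^{r_k}$. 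This single split plus Jensen is exactly the ``control of the coupling'' that your sketch leaves unspecified, and it is what produces the constants displayed in the lemma. The univariate inequalities are handled the same way, using the decomposition about $W^{(i)}$ rather than about $W$.
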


\begin{proof}Let us prove the first inequality.  From inequality (\ref{sigmaiii}) we have
\begin{align*}\mathbb{E}\bigg|X_{ij}^q\frac{\partial^nf}{\partial w_j^n}(\mathbf{W}_{\theta}^{(i,j)})\bigg| &\leq \frac{h_n}{n}\bigg[A\mathbb{E}|X_{ij}|^q+B\sum_{k=1}^d2^{r_k}\Big(\mathbb{E}|X_{ij}^q((\mathbf{W}_{\theta}^{(i,j)})_k)^{r_k}|+\mathbb{E}|Z|^{r_k}\mathbb{E}|X_{ij}|^{q}\Big)\bigg],
\end{align*}
where $(\mathbf{W}_{\theta}^{(i,j)})_k$ denotes the $k$-th component of $\mathbf{W}_{\theta}^{(i,j)}$.  Note that $(\mathbf{W}_{\theta}^{(i,j)})_j=W_j^{(i)}+\frac{\theta}{\sqrt{n_j}}X_{ij}$ and that $(\mathbf{W}_{\theta}^{(i,j)})_k=W_k=W_k^{(i)}+\frac{1}{\sqrt{n_k}}X_{ik}$ for $k\not=j$, where $W_j^{(i)}=W_j-\frac{1}{\sqrt{n_j}}X_{ij}$.  Now, let $\theta_j=\theta\in(0,1)$, and $\theta_k=1$ if $k\not=j$.  By using the crude inequality $|a+b|^s\leq 2^s(|a|^s+|b|^s)$, which holds for any $s\geq0$, and independence of $X_{ij}$ and $W_k^{(i)}$, we have, for all $k=1,\ldots,d$, 
\begin{align}\mathbb{E}|X_{ij}^q(\mathbf{W}_{\theta}^{(i,j)})_k)^{r_k}|&\leq 2^{r_k}\mathbb{E}\bigg|X_{ij}^q\bigg(|W_k^{(i)}|^{r_k}+\frac{\theta_k^{r_k}}{n_k^{r_k/2}}|X_{ik}|^{r_k}\bigg)\bigg| \nonumber\\
\label{foxsell}&\leq 2^{r_k}\bigg(\mathbb{E}|X_{ij}|^q\mathbb{E}|W_k^{(i)}|^{r_k}+\frac{1}{n_k^{r_k/2}}\mathbb{E}|X_{ij}^qX_{ik}^{r_k}|\bigg).
\end{align}
Using that $\mathbb{E}|W_k^{(i)}|^{r_k}\leq \mathbb{E}|W_k|^{r_k}$ leads to the desired inequality.  This can be seen by using Jensen's inequality: 
\begin{align*}\mathbb{E}|W_k|^{r_k}&=\mathbb{E}[\mathbb{E}[|W_k+n_k^{-1/2}X_{ik}|^{r_k} \: | \: W_k^{(i)}]] \geq\mathbb{E}|\mathbb{E}[W_k+n_k^{-1/2}X_{ik} \: | \: W_{k}^{(i)}]|^{r_k} 
=\mathbb{E}|W_k^{(i)}|^{r_k}.
\end{align*}

The proofs of the other inequalities are similar, with the only difference being that for the final two inequalities in which $d=1$ we have
\begin{equation*}\mathbb{E}|X_{i}^q(W_{\theta}^{(i)})^{r+l}|\leq2^{r+l-1}\bigg(\mathbb{E}|X_{i}|^q\mathbb{E}|W^{(i)}|^{r+l}+\frac{1}{n^{r/2}}\mathbb{E}|X_{i}|^{q+r_k+l}\bigg), 
\end{equation*}
for $l=1,2$, which is obtained via an analogous calculation to the one used to obtain (\ref{foxsell}), but here we used the inequality $|a+b|^s\leq 2^{s-1}(|a|^s+|b|^s)$, which holds for any $s\geq1$.
\end{proof}

By applying the inequalities of Lemma \ref{cbwbshc} to the bounds of Lemmas \ref{noteveng} and \ref{evenglemmabound}, we can obtain the following four theorems for the distributional distance between $g(\mathbf{W})$ and $g(\mathbf{Z})$ when the derivatives of $g$ have polynomial growth.  Theorem \ref{winfirst1} follows from using inequality (\ref{cbhsxx}) in the bound of Lemma \ref{noteveng}, and the other theorems are proved similarly.

\begin{theorem}\label{winfirst1} Let $P(\mathbf{w})=A+B\sum_{i=1}^d|w_i|^{r_i}$, where $A$, $B$ and $r_1,\ldots,r_d$ are non-negative constants. Suppose $g\in C_P^p(\mathbb{R}^d)$. Let $X_{1,1},\ldots,X_{n,1},\ldots,X_{1,d},\ldots,X_{n,d}$ be independent random variables with $\mathbb{E}X_{ij}^k=\mathbb{E}Z^k$ for all $1\leq i\leq n_j$, $1\leq j\leq d$ and all positive integers $k\leq p$.
Suppose also that $\mathbb{E}|X_{ij}|^{r_l+p+1}<\infty$ for all $i$, $j$ and $l$.  Then, for $h\in C_b^p(\mathbb{R})$, 
\begin{align*} |\mathbb{E}h(g(\mathbf{W}))-\mathbb{E}h(g(\mathbf{Z}))|
&\leq\frac{p+1}{p!}h_p\sum_{j=1}^d\sum_{i=1}^{n_j}\frac{1}{n_j^{(p+1)/2}}\bigg[A\mathbb{E}|X_{ij}|^{p+1}+B\sum_{k=1}^d2^{r_k}\bigg(2^{r_k}\mathbb{E}|X_{ij}|^{p+1}\mathbb{E}|W_k|^{r_k} \\
&\quad+\frac{2^{r_k}}{n_k^{r_k/2}}\mathbb{E}|X_{ij}^{p+1}X_{ik}^{r_k}|+\mathbb{E}|Z|^{r_k+1}\mathbb{E}|X_{ij}|^{p+1}\bigg)\bigg].
\end{align*}
\end{theorem}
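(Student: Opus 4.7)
The plan is to combine the two ``building block'' results already assembled: the Taylor-expansion bound of Lemma \ref{noteveng}, which rewrites $|\mathbb{E}h(g(\mathbf{W}))-\mathbb{E}h(g(\mathbf{Z}))|$ in terms of expectations of $X_{ij}^{p\pm1}$ against $\partial^{p+1}f/\partial w_j^{p+1}$ evaluated at $\mathbf{W}_{j,\theta}^{(i)}$, and the partial-derivative estimate (\ref{cbhsxx}) of Lemma \ref{cbwbshc}, which handles exactly such expectations for polynomial $P$. The hypotheses fit cleanly: $g\in C_P^p(\mathbb{R}^d)$ and $h\in C_b^p(\mathbb{R})$ are precisely the conditions needed to apply (\ref{cbhsxx}) at derivative order $n=p+1$ (since that estimate needs $h\in C_b^{n-1}$ and $g\in C_P^{n-1}$), and they also provide the differentiability needed for Lemma \ref{noteveng}. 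The matching moment hypothesis is used in Lemma \ref{noteveng} to cancel the lower-order Taylor terms.

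First I would write down the bound given by Lemma \ref{noteveng}, namely the sum involving $\sup_\theta \mathbb{E}|X_{ij}^{p-1}\partial^{p+1}f/\partial w_j^{p+1}(\mathbf{W}_{j,\theta}^{(i)})|$ and $\sup_\theta \mathbb{E}|X_{ij}^{p+1}\partial^{p+1}f/\partial w_j^{p+1}(\mathbf{W}_{j,\theta}^{(i)})|$, weighted by $\tfrac{1}{(p-1)!n_j^{(p+1)/2}}$ and $\tfrac{1}{p!n_j^{(p+1)/2}}$ respectively. Next, I apply inequality (\ref{cbhsxx}) with $n=p+1$ (so the prefactor is $h_p$) to both expectations, once with $q=p-1$ and once with $q=p+1$. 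This yields a bound whose right-hand side is a sum of terms proportional to $\mathbb{E}|X_{ij}|^{p-1}$, $\mathbb{E}|X_{ij}|^{p-1}\mathbb{E}|W_j|^{r_k}$, $\mathbb{E}|X_{ij}|^{p+1}$, etc.

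To collapse the $(p-1)$-moment terms into the $(p+1)$-moment terms that appear in the theorem statement, I would invoke Lyapunov's inequality. Since $\mathbb{E}X_{ij}^2=1$ by the matching-moment hypothesis, Lyapunov gives $(\mathbb{E}|X_{ij}|^{p+1})^{1/(p+1)}\geq(\mathbb{E}X_{ij}^2)^{1/2}=1$, so $\mathbb{E}|X_{ij}|^{p+1}\geq 1$; applied again, $\mathbb{E}|X_{ij}|^{p-1}\leq(\mathbb{E}|X_{ij}|^{p+1})^{(p-1)/(p+1)}\leq\mathbb{E}|X_{ij}|^{p+1}$, and the analogous inequality handles $\mathbb{E}|X_{ij}|^{r_k+p-1}\leq\mathbb{E}|X_{ij}|^{r_k+p+1}$. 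After this replacement, the two expectation bounds coincide up to constants, and the combined prefactor becomes
\begin{equation*}
\frac{1}{(p-1)!}+\frac{1}{p!}=\frac{p+1}{p!},
\end{equation*}
which matches the stated theorem.

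The only slight subtlety is the moment-comparison step, since the bound given by (\ref{cbhsxx}) with $q=p-1$ involves $\mathbb{E}|X_{ij}|^{p-1}$ and $\mathbb{E}|X_{ij}|^{r_k+p-1}$, whereas the theorem only lists $\mathbb{E}|X_{ij}|^{p+1}$ and $\mathbb{E}|X_{ij}|^{r_k+p+1}$; the Lyapunov argument above handles this cleanly, so I do not anticipate any real obstacle. The finiteness assumption $\mathbb{E}|X_{ij}|^{r_l+p+1}<\infty$ ensures all quantities on the right-hand side are well defined, and in particular the expectations appearing in Lemma \ref{noteveng} exist, justifying the use of that lemma.
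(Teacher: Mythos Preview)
Your proposal is correct and follows exactly the route the paper takes: the paper's proof reads in full ``Theorem \ref{winfirst1} follows from using inequality (\ref{cbhsxx}) in the bound of Lemma \ref{noteveng}.'' You have simply made explicit the moment-comparison step (via Lyapunov, using $\mathbb{E}X_{ij}^2=1$) that collapses the $q=p-1$ terms into the $q=p+1$ terms and produces the prefactor $\tfrac{1}{(p-1)!}+\tfrac{1}{p!}=\tfrac{p+1}{p!}$; the paper leaves this implicit.
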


\begin{theorem}\label{winfirst2} Let $P(w)=A+B|w|^r$, where $A$, $B$ and $r$ are non-negative constants. Suppose $g\in C_P^{p-1}(\mathbb{R})$. Let $X_{1},\ldots,X_{n}$ be independent random variables with $\mathbb{E}X_{i}^k=\mathbb{E}Z^k$ for all $1\leq i\leq n$ and all positive integers $k\leq p$.  Suppose also that $\mathbb{E}|X_{i}|^{r+p+2}<\infty$ for all $1\leq i\leq n$.  Then, for $h\in C_b^{p-1}(\mathbb{R})$, 
\begin{align*}|\mathbb{E}h(g(W))-\mathbb{E}h(g(Z))|
&\leq\frac{p+1}{p!n^{(p+1)/2}}h_{p-1}\!\sum_{i=1}^{n}\bigg[3A\mathbb{E}|X_{i}|^{p+1}+2^rB\bigg(2^{r+1}\mathbb{E}|X_i|^{p+1}\big(\mathbb{E}|W|^{r+1}\!+\!\mathbb{E}|W|^{r}\big)\\
&\quad+4\mathbb{E}|Z|^{r+1}\mathbb{E}|X_i|^{p+2}+\frac{2^{r+2}}{n^{r/2}}\mathbb{E}|X_i|^{r+p+2}\bigg)\bigg].
\end{align*}
\end{theorem}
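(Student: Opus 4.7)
The plan is to specialize Lemma \ref{noteveng} to the case $d=1$, $\Sigma=1$, and then bound the resulting derivative expectations using the univariate estimate for $|f^{(n)}(w)|$ from Lemma \ref{cbwbshc}.  This parallels the proof of Theorem \ref{winfirst1}, but replaces the multivariate bound (\ref{cbhsxx}) with the sharper univariate bound, which carries one fewer derivative of both $h$ and $g$.  That is precisely what accounts for the weakened hypotheses $h\in C_b^{p-1}(\mathbb{R})$ and $g\in C_P^{p-1}(\mathbb{R})$ and for the factor $h_{p-1}$ in the conclusion.

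First, I would take $d=1$ in Lemma \ref{noteveng} to obtain
\[
|\mathbb{E}h(g(W))-\mathbb{E}h(g(Z))|\leq\sum_{i=1}^{n}\frac{1}{(p-1)!\,n^{(p+1)/2}}\left\{E_{p-1}^{(i)}+\tfrac{1}{p}E_{p+1}^{(i)}\right\},
\]
where $E_{q}^{(i)}:=\sup_{\theta}\mathbb{E}|X_{i}^{q}f^{(p+1)}(W_{\theta}^{(i)})|$.  Next, I would apply the univariate bound for $|f^{(n)}(w)|$ from Lemma \ref{cbwbshc} (legitimate under $h\in C_b^{p-1}(\mathbb{R})$ and $g\in C_P^{p-1}(\mathbb{R})$ after shifting the index $n\mapsto p+1$), separately with $q=p-1$ and $q=p+1$.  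Each $E_q^{(i)}$ is then dominated by $h_{p-1}$ times a fixed polynomial in the moments $\mathbb{E}|X_i|^{q}$, $\mathbb{E}|X_i|^{q+1}$, $\mathbb{E}|X_i|^{r+q+1}$ together with $\mathbb{E}|W|^{r}$, $\mathbb{E}|W|^{r+1}$ and $\mathbb{E}|Z|^{r+1}$.

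The final step, where the only real care is required, is to consolidate the $q=p-1$ contribution into the $q=p+1$ contribution so that a single bracketed expression can be factored out.  Here I would exploit the matching-moment assumption, which in particular forces $\mathbb{E}X_i^2=1$.  Combined with Lyapunov's inequality, this gives $(\mathbb{E}|X_i|^{s+2})^{1/(s+2)}\geq(\mathbb{E}X_i^2)^{1/2}=1$, and then $(\mathbb{E}|X_i|^s)^{1/s}\leq(\mathbb{E}|X_i|^{s+2})^{1/(s+2)}$ yields $\mathbb{E}|X_i|^s\leq\mathbb{E}|X_i|^{s+2}$ for every $s\geq 0$.  Applying this with $s\in\{p-1,p,r+p\}$ replaces each moment in the $q=p-1$ bound by the corresponding moment in the $q=p+1$ bound, so that $E_{p-1}^{(i)}+\frac{1}{p}E_{p+1}^{(i)}\leq(1+\tfrac{1}{p})\cdot[\,\text{expression at }q=p+1\,]$.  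The prefactors then combine via $\frac{1}{(p-1)!}+\frac{1}{p!}=\frac{p+1}{p!}$, and the claimed bound drops out.

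There is no genuinely hard step: the analytical work is already packaged in Lemmas \ref{noteveng} and \ref{cbwbshc}, and the improvement in smoothness requirements over Theorem \ref{winfirst1} traces entirely to the sharper univariate solution estimate (\ref{f2soln}).  The only piece of the argument that deserves explicit verification is the Lyapunov-based moment consolidation, which is what permits the two distinct $q$-contributions to be folded into a single factor $\frac{p+1}{p!}$.
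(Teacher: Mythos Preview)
Your proposal is correct and follows exactly the route the paper intends: apply Lemma \ref{noteveng} in the univariate case and then bound the resulting expectations using the fourth inequality of Lemma \ref{cbwbshc} (the $d=1$, $\Sigma=1$ bound for $\mathbb{E}|X_i^q f^{(p)}(W_\theta^{(i)})|$), with the index shifted so that only $h_{p-1}$ and $g\in C_P^{p-1}(\mathbb{R})$ are needed. The paper merely says the theorem is ``proved similarly'' to Theorem \ref{winfirst1}; your write-up correctly makes explicit the one step the paper leaves to the reader, namely the Lyapunov/H\"older consolidation $\mathbb{E}|X_i|^s\le\mathbb{E}|X_i|^{s+2}$ (using $\mathbb{E}X_i^2=1$) that folds the $q=p-1$ contribution into the $q=p+1$ contribution and produces the prefactor $\tfrac{p+1}{p!}$.
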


\begin{theorem}\label{multieveng} Let $P(\mathbf{w})=A+B\sum_{i=1}^d|w_i|^{r_i}$, where $A$, $B$ and $r_1,\ldots,r_d$ are non-negative constants. Suppose $g\in C_P^{p+2}(\mathbb{R}^d)$ is an even function. Let $X_{1,1},\ldots,X_{n,1},\ldots,X_{1,d},\ldots,X_{n,d}$ be independent random variables with $\mathbb{E}X_{ij}^k=\mathbb{E}Z^k$ for all $1\leq i\leq n_j$, $1\leq j\leq d$ and all positive integers $k\leq p$.  Suppose also that $\mathbb{E}|X_{ij}|^{r_l+p+2}<\infty$ for all $i$, $j$ and $l$.  Then, for $h\in C_b^{p+2}(\mathbb{R})$, 
\begin{align*}|\mathbb{E}h(g(\mathbf{W}))-&\mathbb{E}h(g(\mathbf{Z}))| 
\leq\frac{1}{p!}h_{p+2}\bigg\{\frac{1}{p+2}\sum_{j=1}^d\sum_{i=1}^{n_j}\frac{1}{n_j^{p/2+1}}\bigg(\frac{p+2}{p+1}+|\mathbb{E}X_{ij}^{p+1}|\bigg)\bigg[A\mathbb{E}|X_{ij}|^{p+2} \\
&\quad+B\sum_{k=1}^d2^{r_k}\bigg(2^{r_k}\mathbb{E}|X_{ij}|^{p+2}\mathbb{E}|W_k|^{r_k}+\frac{2^{r_k}}{n_k^{r_k/2}}\mathbb{E}|X_{ij}^{p+2}X_{ik}^{r_k}|+\mathbb{E}|Z|^{r_k}\mathbb{E}|X_{ij}|^{p+2}\bigg)\bigg]\\
&\quad  +\frac{3}{2}\sum_{j=1}^d\sum_{i=1}^{n_j}\frac{|\mathbb{E}X_{ij}^{p+1}|}{n_j^{(p+1)/2}}\sum_{k=1}^d\sum_{l=1}^{n_k}\frac{1}{n_k^{3/2}}\bigg[A\mathbb{E}|X_{lk}|^3 \\
&\quad+B\sum_{t=1}^d\!3^{r_t}\bigg(\!2^{r_t}\mathbb{E}|X_{lk}|^3\mathbb{E}|W_t|^{r_t}\!+\!\frac{2^{r_t}}{n_t^{r_t/2}}\mathbb{E}|X_{lk}^3X_{lt}^{r_t}|\!+\!2\mathbb{E}|Z|^{r_t+1}\mathbb{E}|X_{lk}|^{3}\!\bigg)\bigg]\bigg\}.
\end{align*}
\end{theorem}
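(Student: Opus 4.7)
The plan is to substitute the polynomial-$P$ bounds of Lemma \ref{cbwbshc} directly into the master inequality (\ref{dig hole}) supplied by Lemma \ref{evenglemmabound}, then collect the resulting terms. Since the analytic work has already been done in those two lemmas, what remains is essentially algebraic bookkeeping.

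Lemma \ref{evenglemmabound} splits the error into five groups: three expectations of the form $\mathbb{E}|X_{ij}^q\,\partial^{p+2}f/\partial w_j^{p+2}(\mathbf{W}_{j,\theta}^{(i)})|$ for $q\in\{p,p+2,1\}$ with weights $1$, $\tfrac{1}{p+1}$, $|\mathbb{E}X_{ij}^{p+1}|$ (all sharing the prefactor $\frac{1}{p!\,n_j^{p/2+1}}$), together with two expectations $\mathbb{E}|X_{lk}^q\,\partial^3\psi_j/\partial w_k^3(\mathbf{W}_{k,\theta}^{(l)})|$ for $q\in\{1,3\}$ with weights $1$ and $\tfrac12$. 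First I would bound each of the three $f$-derivative terms by the first inequality of Lemma \ref{cbwbshc} with its ``$n$'' set to $p+2$; this is permitted because $g\in C_P^{p+2}(\mathbb{R}^d)$ and $h\in C_b^{p+2}(\mathbb{R})$, and it yields a common multiplier $h_{p+2}/(p+2)$ together with brackets of the form $A\mathbb{E}|X_{ij}|^q+B\sum_k 2^{r_k}(\cdots)$. Next I would bound the two $\psi_j$-derivative terms by the third inequality of Lemma \ref{cbwbshc} with ``$m$'' $=p+1$ (so that $m+1=p+2$), producing the factor $h_{p+2}$ and analogous brackets containing $3^{r_k}$ in place of $2^{r_k}$.

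To collapse the three $f$-contributions to a single bracket in the common moment $\mathbb{E}|X_{ij}|^{p+2}$, I would invoke Lyapunov's inequality: since $\mathbb{E}X_{ij}^2=1$, Jensen forces $\mathbb{E}|X_{ij}|^{p+2}\ge 1$, so $\mathbb{E}|X_{ij}|^q\le \mathbb{E}|X_{ij}|^{p+2}$ and $\mathbb{E}|X_{ij}|^{r_k+q}\le \mathbb{E}|X_{ij}|^{r_k+p+2}$ for every $1\le q\le p+2$. After this replacement the three $f$-brackets become identical, and summing the three weights yields
\[
\frac{1}{p+2}+\frac{1}{(p+1)(p+2)}+\frac{|\mathbb{E}X_{ij}^{p+1}|}{p+2}=\frac{1}{p+2}\left(\frac{p+2}{p+1}+|\mathbb{E}X_{ij}^{p+1}|\right),
\]
which is exactly the combinatorial factor appearing in the first block of the theorem's bound. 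An analogous collapse of the two $\psi_j$-contributions, using $\mathbb{E}|X_{lk}|\le \mathbb{E}|X_{lk}|^3$ in the $q=1$ summand, merges them into a single bracket with total weight $1+\tfrac12=\tfrac32$, producing the $3/2$ in the second block.

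The main obstacle is purely bookkeeping: correctly pairing each line of (\ref{dig hole}) with the appropriate instance of Lemma \ref{cbwbshc}, keeping track of the factorial and $h_{p+2}$ prefactors, and confirming that the Lyapunov-monotonicity step introduces no spurious constants. No new analytic ingredient beyond Lemmas \ref{evenglemmabound} and \ref{cbwbshc} is required.
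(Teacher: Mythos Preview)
Your proposal is correct and follows exactly the route the paper takes: the paper states that Theorems \ref{winfirst1}--\ref{multievenguni} are all obtained ``similarly'' by feeding the relevant inequalities of Lemma \ref{cbwbshc} into Lemmas \ref{noteveng} or \ref{evenglemmabound}, and your plan of inserting the first and third inequalities of Lemma \ref{cbwbshc} (with the lemma's ``$p$'' set to $p+2$ and $m=p+1$, respectively) into (\ref{dig hole}), then collapsing the brackets via the moment monotonicity $\mathbb{E}|X_{ij}|^q\le\mathbb{E}|X_{ij}|^{p+2}$ for $q\le p+2$, reproduces the stated bound with the correct combinatorial factors $\tfrac{1}{p+2}\big(\tfrac{p+2}{p+1}+|\mathbb{E}X_{ij}^{p+1}|\big)$ and $\tfrac32$.
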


\begin{theorem}\label{multievenguni} Let $P(w)=A+B|w|^r$, where $A$, $B$ and $r$ are non-negative constants.  Suppose $g\in C_P^{p}(\mathbb{R})$ is an even function. Let $X_{1},\ldots,X_{n}$ be independent random variables with $\mathbb{E}X_{i}^k=\mathbb{E}Z^k$ for all $1\leq i\leq n$ and all positive integers $k\leq p$.  Suppose also that $\mathbb{E}|X_{i}|^{r+p+4}<\infty$ for all $1\leq i\leq n$.  Then, for $h\in C_b^{p}(\mathbb{R})$,
\begin{align*}&|\mathbb{E}h(g(W))-\mathbb{E}h(g(Z))| \leq \frac{1}{p!n^{p/2+1}}h_p\bigg\{\frac{1}{p+2}\sum_{i=1}^{n}\bigg(\frac{p+2}{p+1}+|\mathbb{E}X_{i}^{p+1}|\bigg)\\
&\quad\times\bigg[3A\mathbb{E}|X_{i}|^{p+2}+2^rB\bigg(2^{r+1}\mathbb{E}|X_i|^{p+2}\big(\mathbb{E}|W|^{r+1}+\mathbb{E}|W|^{r}\big) \\
&\quad +4\mathbb{E}|Z|^{r+1}\mathbb{E}|X_i|^{p+3}\!+\!\frac{2^{r+2}}{n^{r/2}}\mathbb{E}|X_i|^{r+p+3}\!\bigg)\bigg]\!+\!\frac{3}{2n}\sum_{i=1}^{n}\sum_{l=1}^{n}\!|\mathbb{E}X_{i}^{p+1}|\bigg[10A\mathbb{E}|X_l|^{p+2} \\
&\quad+3^{r+1}B\bigg(2^{r+1}\mathbb{E}|X_l|^{p+2}\big(2\mathbb{E}|W|^{r+2}+\mathbb{E}|W|^{r}\big)+16\mathbb{E}|Z|^{r+1}\mathbb{E}|X_l|^{p+4}+\frac{2^{r+3}}{n^{r/2}}\mathbb{E}|X_l|^{r+p+4}\bigg)\bigg]\bigg\}.
\end{align*}
\end{theorem}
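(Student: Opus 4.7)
The plan is to apply the distributional distance bound of Lemma \ref{evenglemmabound}, specialised to $d=1$ and $\Sigma=1$, and then insert the univariate polynomial-growth derivative estimates of Lemma \ref{cbwbshc}. In that specialisation, Lemma \ref{evenglemmabound} controls $|\mathbb{E}h(g(W))-\mathbb{E}h(g(Z))|$ by a linear combination of three quantities of the form $\sup_\theta\mathbb{E}|X_i^q f^{(p+2)}(W_\theta^{(i)})|$ (with $q\in\{p,p+2,1\}$ and respective coefficients $1$, $1/(p+1)$, $|\mathbb{E}X_i^{p+1}|$), together with a double sum involving $\sup_\theta\mathbb{E}|X_l^q\psi_{p+1}^{(3)}(W_\theta^{(l)})|$ for $q\in\{1,3\}$ with coefficients $1$ and $1/2$, where $\psi_{p+1}$ is the solution of the auxiliary Stein equation $\psi''-w\psi'=f^{(p+1)}$.

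The regularity assumption $g\in C_P^p(\mathbb{R})$ is exactly what is needed to invoke the univariate polynomial-$P$ estimates of Lemma \ref{cbwbshc}: the bound on $f^{(p+2)}$ requires $g\in C_P^{(p+2)-2}=C_P^p$ and produces the prefactor $h_{(p+2)-2}=h_p$, and the bound on $\psi_{p+1}^{(3)}$ requires $g\in C_P^{(p+1)-1}=C_P^p$ and likewise produces the prefactor $h_{(p+1)-1}=h_p$. After substitution every term inherits the common prefactor $h_p/(p!\,n^{p/2+1})$ of the stated bound, together with the moments $\mathbb{E}|W|^{r+1}$, $\mathbb{E}|W|^r$ (for the $f$-branch) and $\mathbb{E}|W|^{r+2}$, $\mathbb{E}|W|^r$ (for the $\psi$-branch) of $W$ on the right-hand side.

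What is left is the algebraic unification of the various moments of $X_i$. Since $\mathbb{E}X_i^2=1$, Jensen's inequality gives $\mathbb{E}|X_i|^{k+s}\ge(\mathbb{E}X_i^2)^{(k+s)/2}=1$ whenever $k+s\ge 2$, and then the power-mean inequality yields $\mathbb{E}|X_i|^k\le(\mathbb{E}|X_i|^{k+s})^{k/(k+s)}\le\mathbb{E}|X_i|^{k+s}$. Applying this with appropriate values of $s$, the lower-order moments arising in the $q\in\{p,1\}$ branches of the $f$-estimate are bounded by the highest-order moments $\mathbb{E}|X_i|^{p+2}$, $\mathbb{E}|X_i|^{p+3}$, $\mathbb{E}|X_i|^{r+p+3}$ appearing in the $q=p+2$ branch; analogously, the $\psi_{p+1}^{(3)}$-branches unify into moments $\mathbb{E}|X_l|^{p+2}$, $\mathbb{E}|X_l|^{p+4}$, $\mathbb{E}|X_l|^{r+p+4}$. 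Collecting the resulting coefficients yields the compact factors $\frac{1}{p+2}\bigl(\frac{p+2}{p+1}+|\mathbb{E}X_i^{p+1}|\bigr)$ and $\frac{3}{2n}|\mathbb{E}X_i^{p+1}|$ multiplying the two groups of terms in the stated bound.

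The main obstacle is purely bookkeeping: tracking the numerical constants (the various powers of $2$ and $3$, the factors $4$, $10$, $16$, and the $n^{-r/2}$ terms) through the substitution and verifying that they collect into exactly the stated compact form. The essential conceptual ingredients — the multivariate normal Stein equation with test function $h\circ g$ from Section 2, the symmetry argument that gives $\mathbb{E}f^{(p+1)}(Z)=0$ (Lemma \ref{evennormal}) underlying Lemma \ref{evenglemmabound}, and the polynomial-weight derivative bounds of Lemma \ref{cbwbshc} — are all already in place, so no new ideas are required.
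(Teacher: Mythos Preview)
Your proposal is correct and follows essentially the same route as the paper: the paper explicitly states that Theorems \ref{winfirst1}--\ref{multievenguni} are all proved by substituting the appropriate inequalities of Lemma \ref{cbwbshc} into the bounds of Lemmas \ref{noteveng} or \ref{evenglemmabound}, and for Theorem \ref{multievenguni} this means exactly what you describe --- specialise Lemma \ref{evenglemmabound} to $d=1$, insert the two univariate ($d=1$, $\Sigma=1$) estimates from Lemma \ref{cbwbshc} for $f^{(p+2)}$ and $\psi_{p+1}^{(3)}$, and then majorise the lower-order absolute moments of the $X_i$ by the highest-order ones via the power-mean inequality. Your check that $g\in C_P^p$ is precisely the hypothesis needed for both univariate estimates (giving the common prefactor $h_p$) is on point, and the remaining work is indeed pure bookkeeping of numerical constants.
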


\begin{remark}\label{wass}Consider Theorem \ref{winfirst2} with $p=2$.  Then setting $h_1=\|h'\|=1$ gives a bound in the Wasserstein metric. All other bounds given in Theorems \ref{winfirst1} -- \ref{multievenguni} can only be given in weaker metrics, though.  Consider now Theorem \ref{winfirst1} with $p\geq2$, and set $\|h^{(k)}\|=1$ for all $k=1,\ldots,p$.  Then $h_p=\sum_{k=1}^p{p\brace k}=B_p$, where $B_p=\mathrm{e}^{-1}\sum_{j=1}^\infty\frac{j^p}{j!}$ is the $p$-th Bell number (see Section 26.7(i) of \cite{olver}, noting that ${n\brace 0}=0$ for $n\geq1$).  Thus, setting $h_p=B_p$ in Theorem \ref{winfirst2} gives a bound in the smooth Wasserstein metric $d_{\mathcal{H}_p}$, $p\geq2$.  Similarly, the bounds from Theorems \ref{winfirst2} -- \ref{multievenguni} can be given in smooth Wasserstein metrics.
\end{remark}

\begin{remark}From Theorem \ref{winfirst1}, we see that if the first $p$ moments of the $X_{ij}$ match those of the $N(0,1)$ distribution then, provided $\mathbb{E}|X_{ij}|^{r_l+p+1}<\infty$ for all $i,j,l$ and $g\in C_{P_1}^p(\mathbb{R}^d)$ for $P_1(\mathbf{w})=A+B\sum_{i=1}^d|w_i|^{r_i}$, the rate of convergence of $g(\mathbf{W})$ to $g(\mathbf{Z})$ is $O(n^{-(p-1)/2})$, where $n=\min_{1\leq j\leq d}n_j$.  For this rate of convergence, we require that the test function $h$ is in the class $C_b^p(\mathbb{R})$.  By Theorem \ref{multieveng}, it follows that, for even $p$, the rate of convergence can be improved to $O(n^{-p/2})$ if we strengthen the assumptions to $\mathbb{E}|X_{ij}|^{r_l+p+2}<\infty$ for all $i,j,l$, that $g_{P_1}^{p+2}(\mathbb{R}^d)$ is an even function, and that $h\in C_b^{p+2}(\mathbb{R})$.  When $d=1$, we see from Theorems \ref{winfirst2} and \ref{multievenguni} that we can achieve these convergence rates with weaker assumptions on $g$ and $h$, possibly at the expense of stronger conditions on the existence of absolute moments of the $X_i$.
\end{remark}

\begin{remark}\label{rem3.6}One could derive analogues of Theorems \ref{winfirst1} -- \ref{multievenguni} for the case that the dominating function of $g$ is of the form $P_2(\mathbf{w})=A\exp(a\sum_{i=1}^d|w_i|^b)$, where $0<b\leq2$.  Such results could be derived by using the bounds of Section \ref{sec2point3} to obtain an analogue of Lemma \ref{cbwbshc} and substitute the resulting bounds into the general bounds of Lemmas \ref{noteveng} and \ref{evenglemmabound}.  In this paper, we do not carry out the more tedious and involved calculations required to obtain such analogues of Theorems \ref{winfirst1} -- \ref{multievenguni}, but we note here that it can easily be seen that similar principles regarding the rate of convergence of $g(\mathbf{W})$ to $g(\mathbf{Z})$ would apply.  For example, the analogue of Theorem \ref{winfirst1} would give a bound on $|\mathbb{E}h(g(\mathbf{W}))-\mathbb{E}h(g(\mathbf{Z}))|$ that was $O(n^{-(p-1)/2})$ under the following assumptions.  The first $p$ moments of the $X_{ij}$ would also agree with those of the $N(0,1)$ distribution, although the absolute moment assumption would be replaced by the condition that an expectation of the form $\mathbb{E}[|X_{ij}|^\alpha\exp(\beta |X_{ij}|^\gamma)]$ would exist, for some $\alpha,\beta,\gamma>0$.  The assumptions on $g$ and $h$ would be exactly analogous with $g\in C_{P_2}^p(\mathbb{R}^d)$ and $h\in C_b^p(\mathbb{R})$.  
\end{remark}

The rates of convergence of Theorems \ref{winfirst1} -- \ref{multievenguni} cannot be improved.

\begin{proposition}\label{prop3.5}(i). For any $p\geq2$, the $O(n^{-(p-1)/2})$ rate of Theorems \ref{winfirst1} and \ref{winfirst2} cannot be improved.

(ii). For any even $p\geq2$, the $O(n^{-p/2})$ rate of Theorems \ref{multieveng} and \ref{multievenguni} cannot be improved.
\end{proposition}

The proof shall use the following lemma.

\begin{lemma}\label{lem3.6}Let $X,X_1,\ldots,X_n$ be i.i.d$.$ random variables, with $\mathbb{E}X^k=\mathbb{E}Z^k$ for $k=1,\ldots,p$. 

(i). If $\mathbb{E}|X|^{p+1}<\infty$, then
\begin{equation*}\label{ordercor1}\mathbb{E}W^{p+1}=\mathbb{E}Z^{p+1}+\frac{1}{n^{(p-1)/2}}\big(\mathbb{E}X^{p+1}-\mathbb{E}Z^{p+1}\big).
\end{equation*}

(ii). If we further assume that $\mathbb{E}|X|^{p+2}<\infty$, then
\begin{equation}\label{ordercor2}\mathbb{E}W^{p+2}=\mathbb{E}Z^{p+2}+\frac{1}{n^{p/2}}\big(\mathbb{E}X^{p+2}-\mathbb{E}Z^{p+2}\big).
\end{equation}
\end{lemma}

\begin{proof}  Let us prove part (ii); the proof of part (i) is similar and slightly easier.  Since
\begin{equation*}\mathbb{E}W^{p+2}=\frac{1}{n^{(p+2)/2}}\sum_{i_1=1}^n\cdots\sum_{i_{p+2}=1}^n\mathbb{E}[X_{i_1}\cdots X_{i_{p+2}}]
\end{equation*}
and the random variables $X_1,\ldots,X_n$ are i.i.d$.$, it follows that $\mathbb{E}W^{p+2}$ is expressed solely as a sum of constants and products of moments $\mathbb{E}X^k$, $k=1,\ldots,p+2$.  In fact, only the moment $\mathbb{E}X^{p+2}$ appears in the sum.  This is because the first $p$ moments are equal to the moments of the standard normal distribution and thus become constants.  Also, the moment $\mathbb{E}X^{p+1}$ only appears in the sum as a product with the moment $\mathbb{E}X=0$, and thus vanishes.  Therefore, $\mathbb{E}W^{p+2}=a+b\mathbb{E}X^{p+2}$ for some constants $a$ and $b$.  There are only $n$ terms in the sum in which $\mathbb{E}X^{p+2}$ is present, and so $b=n^{-p/2}$.  Also, if $\mathbb{E}X^{p+2}=\mathbb{E}Z^{p+2}$ then we must have that $\mathbb{E}W^{p+2}=\mathbb{E}Z^{p+2}$, from which we deduce that $a=(1-n^{-p/2})\mathbb{E}Z^{p+2}$.  We thus obtain (\ref{ordercor2}), as required.
\end{proof}

\noindent{\emph{Proof of Proposition \ref{prop3.5}.} We first prove that the  $O(n^{-(p-1)/2})$ rate of Theorem \ref{winfirst2} cannot be improved, which also shows that the $O(n^{-(p-1)/2})$ rate of Theorem \ref{winfirst1} cannot be improved.  Let $X,X_1,\ldots,X_n$ be i.i.d$.$ random variables satisfying the assumptions of Theorem \ref{winfirst2}, so that $\mathbb{E}X^k=\mathbb{E}Z^k$ for all positive integers $k\leq p$, and $\mathbb{E}|X|^{p+1}<\infty$.  Suppose, however, that $\mathbb{E}X^{p+1}\not=\mathbb{E}Z^{p+1}$.   Take $h(w)=w$, which is in the class $C_b^{p-1}(\mathbb{R})$, and $g_1(w)=w^{p+1}$, whose derivatives have polynomial growth.  Then, by part (i) of Lemma \ref{lem3.6}, 
\begin{equation*}\mathbb{E}h(g_1(W))-\mathbb{E}h(g_1(Z))=\mathbb{E}W^{p+1}-\mathbb{E}Z^{p+1}=\frac{1}{n^{(p-1)/2}}\big(\mathbb{E}X^{p+1}-\mathbb{E}Z^{p+1}\big),
\end{equation*} 
and so the $O(n^{-(p-1)/2})$ rate cannot be improved.

The proof of part (ii) is similar.  Suppose that $X,X_1,\ldots,X_n$ are as before, but with the additional assumption that $\mathbb{E}X^{p+2}$, where $p$ is even.  Take $h(w)=w$ and $g_2(w)=w^{p+2}$, which is an even function.  Then, by part (ii) of Lemma \ref{lem3.6}, 
\begin{equation*}\mathbb{E}h(g_2(W))-\mathbb{E}h(g_2(Z))=\mathbb{E}W^{p+2}-\mathbb{E}Z^{p+2}=\frac{1}{n^{p/2}}\big(\mathbb{E}X^{p+2}-\mathbb{E}Z^{p+2}\big),
\end{equation*} 
and so the $O(n^{-p/2})$ rate cannot be improved. \hfill $\Box$

\subsection{Examples}\label{secex}

We end with some simple examples that are chosen to illuminate the results of this section.  For simplicity, we consider the case that the $X_{ij}$ are i.i.d$.$ and are equal in law to the random variable $X$.

\subsubsection{Normal approximation: $g(x)=x$}

In the classical case $g(x)=x$, we can apply Theorem \ref{winfirst2} with $A=1$ and $B=0$ to obtain the bound
\begin{equation*}|\mathbb{E}h(W)-\mathbb{E}h(Z)|\leq \frac{3(p+1)}{p!n^{(p-1)/2}}h_{p-1}\mathbb{E}|X|^{p+1}.
\end{equation*}
The case $p=3$ (the first three moments match those of the standard normal) can be compared with Corollary 3.1 of \cite{goldstein}, in which a bound of order $n^{-1}$ was obtained using zero bias couplings.  The requirement that $\mathbb{E}X^4<\infty$ is common to both bounds, but our bound only requires that $h\in C_b^2(\mathbb{R})$, whereas \cite{goldstein} require that $h\in C_b^4(\mathbb{R})$.  For the case of general $p$, we can compare to Corollary 3.2 of \cite{gaunt rate}.  Whilst the requirements on the moments and the test function $h$ are the same, our bound is outperformed by that of \cite{gaunt rate}, which one may expect because our approach is much more general.
 
\subsubsection{Chi-square approximation: $g(x)=x^2$}\label{chise}

Let us now study the approximation of $W^2$ by $Z^2\sim\chi_{(1)}^2$.  Firstly, we apply Theorem \ref{winfirst2}.  Here $g'(w)=2w$, so we take $P(w)=2w$ as our dominating function, and applying the theorem with $p=2$, $A=0$, $B=2$ and $r=1$.  Using that $\mathbb{E}|W|\leq(\mathbb{E}W^2)^{1/2}=1$, $\mathbb{E}|Z|^3=2\sqrt{2/\pi}$ and recalling Remark \ref{wass} we obtain the bound
\begin{equation}\label{h5}d_{\mathrm{W}}(\mathcal{L}(W^2),\chi_{(1)}^2)\leq \frac{48}{\sqrt{n}}\bigg[\mathbb{E}|X|^3+\sqrt{\frac{2}{\pi}}\mathbb{E}X^4+\frac{\mathbb{E}|X|^5}{\sqrt{n}}\bigg].
\end{equation}
It is worth noting that obtaining $O(n^{-1/2})$ Wasserstein distance bounds using the chi-square Stein equation would present technical challenges, and this is the first instance of such a bound in the Stein's method literature.

Since $g(w)=w^2$ is an even function with polynomial growth, the $O(n^{-1/2})$ rate can be improved, at the expense of working in a weaker probability metric.  Let us apply Theorem \ref{multievenguni}.  We take $P(w)=2+4w^2$ as our dominating function, since $|g'(w)|^2=4w^2$ and $g''(w)=2$ are both bounded by $P(w)$ for all $w\in\mathbb{R}$.  We then apply the theorem with $p=2$, $A=2$, $B=4$ and $r=2$.  To obtain a more compact form for the final bound, we simplify by using that $1\leq\mathbb{E}|X|^a\leq\mathbb{E}|X|^b$ and $1\leq\mathbb{E}|W|^a\leq\mathbb{E}|W|^b$, for $2\leq a\leq b$, which follows from H\"{o}lder's inequality.  We also round numbers up to the nearest integer to obtain
\begin{align}\label{h6}|\mathbb{E}h(W^2)-\chi_{(1)}^2h|&\leq\frac{1}{n}\big(\|h''\|+\|h'\|\big)\bigg[22\mathbb{E}|W|^3+40\mathbb{E}|X|^5+\frac{43}{n}\mathbb{E}|X|^7\nonumber\\
&\quad+|\mathbb{E}X^3|\bigg(1312\mathbb{E}X^4\mathbb{E}W^4+3974\mathbb{E}X^6+\frac{2592}{n}\mathbb{E}X^8\bigg)\bigg],
\end{align} 
where $\chi_{(1)}^2h$ denotes the expectation of $h(U)$ for $U\sim\chi_{(1)}^2$.  It is interesting to compare this bound with the previous example; the simple act of taking the square of the random variable means that we can obtain a $O(n^{-1})$ bound even without the first three moments matching those of the standard normal distribution.  

A bound for the quantity $|\mathbb{E}h(W^2)-\chi_{(1)}^2|$ has also been obtained by \cite{gaunt chi square}, through a different approach involving the chi-square Stein equation.  They also required that $\mathbb{E}X^{8}<\infty$; however, their bound requires that $h\in C_b^3(\mathbb{R}_+)$.  Finally, we note that we could obtain a $O(n^{-1})$ bound for the rate of convergence of $\sum_{k=1}^d W_k^2$ to the $\chi_{(d)}^2$ distribution using Theorem \ref{multieveng}.  However, the dependence on $d$ would be bad; for large $d$ the order would be $d^3n^{-1}$.  Instead, it is better to use the bounds (\ref{h5}) of (\ref{h6}) and the simple conditioning argument used to prove Theorem 3.3 of \cite{gaunt chi square} to obtain a $O(d n^{-1})$ bound.  This rate is worse than the $n^{-1}$ rate of Theorem 3.3 of \cite{gaunt chi square}.

\subsubsection{Hermite polynomials: $g(x)=H_n(x)$}

Since the seminal paper \cite{np09}, there has been considerable interest in the link between Stein's method and Malliavin calculus; see the book \cite{np12} for a detailed account of normal approximation by the Malliavin-Stein method. As noted by \cite{peccati14}, an important class of limits which fall outside current the state of the art of the Malliavin-Stein method are those of the type $Q(Z)$, where $Z\sim N(0,1)$ and $Q$ is polynomial of degree strictly greater than 2.  In particular, the case that $P$ is a Hermite polynomial, as given by $H_q(x)=(-1)^q\mathrm{e}^{x^2/2}\frac{\mathrm{d}^q}{\mathrm{d}x^q}(\mathrm{e}^{-x^2/2})$, $q\geq1$, is of particular interest, due to their fundamental role in Gaussian analysis and Malliavin calculus. 

The difficulty in applying the Malliavin-Stein method here is because the only Stein equations in the current literature for $H_q(Z)$, $q\geq3$, are those of \cite{gaunt 34} for $H_3(Z)$ and $H_4(Z)$, which are fifth and third order differential equations, respectively, for which no estimates are given for the solution of the Stein equation.  However, the limit distributions $H_q(Z)$, $q\geq1$, evidently fall into our framework with $g(x)=H_q(x)$. We note that $H_q(x)$ is a polynomial of degree $q$ and if $q$ is odd (even) then $H_q(x)$ is an odd (even) function of $x$.  Therefore, as we did in Section \ref{chise}, we can apply Theorem \ref{winfirst2} (with $P(w)=C_q(1+|w|^{q-1})$ for some absolute constant $C_q$) and Theorem \ref{multievenguni} (with $P(w)=C_q'(1+|w|^{2q-2})$) to obtain the bounds
\begin{equation*}d_{\mathrm{W}}(\mathcal{L}(H_q(W)),\mathcal{L}(H_q(Z)))\leq\frac{K_q\mathbb{E}|X|^{q+3}}{\sqrt{n}}, \quad q\geq3
\end{equation*}
and
\begin{equation*}|\mathbb{E}h(H_{2q}(W))-\mathbb{E}h(H_{2q}(Z))|\leq\frac{M_q}{n}(\|h'\|+\|h''\|)(1+|\mathbb{E}X^3|)\mathbb{E}|X|^{2q+4}, \quad q\geq2, 
\end{equation*}
where $K_q$ and $M_q$ are absolute constants, which can be found from a careful bookkeeping of constants.  In obtaining these compact bounds we used that, for $p\geq2$, there exists a constant $B_p>0$ such that $\mathbb{E}|W|^{p}\leq B_p\mathbb{E}|X|^p$, which follows from the Marcinkiewicz–Zygmund inequality.

This example highlights some of the strengths and weaknesses of the theory developed in this paper.  On the one hand, we are able to easily give quantitative limit theorems for limit distributions that cannot otherwise be dealt with by Stein's method, but, on the other hand, the class of prelimits we can treat is very restrictive and our theory cannot treat limit theorems involving convergence to linear combinations of Hermite polynomials that are found in applications in works such as \cite{tudor}.

\vspace{3mm}

\subsubsection{Approximation of expectations of smooth functions of binomial and Poisson random variables}

Let $S\sim \mathrm{Bin}(n,p)$.  Then it is a well-known and standard application of the central limit
 theorem that $W=\frac{S-np}{\sqrt{np(1-p)}}$ converges in distribution to the standard normal distribution.  Letting $Y_1,\ldots,Y_n\sim \mathrm{Ber}(p)$ be i.i.d$.$ Bernoulli random variables, we can write $S=\sum_{i=1}^nY_i$.  We can thus write $W=\frac{1}{\sqrt{n}}\sum_{i=1}^nX_i,$ where $X_i=\frac{Y_i-p}{\sqrt{p(1-p)}}$.  All moments of the $X_i$ exist, so if we take $h(w)=w$ and $g:\mathbb{R}\rightarrow\mathbb{R}$ has a first derivative with polynomial growth, then we can apply Theorem \ref{winfirst2} to obtain the bound 
\begin{equation}\label{tyui}|\mathbb{E}g(W)-\mathbb{E}g(Z)|\leq \frac{C_{p,g}}{\sqrt{n}},
\end{equation}
where $C_{p,g}$ depends on $p$ and $g$, but not $n$.  If we assume further that $g$ has a second derivative with polynomial growth and that either $p=\frac{1}{2}$ ($\mathbb{E}X_i^3=0$ if and only if $p=\frac{1}{2}$) or $g$ is an even function, then we can apply Theorem \ref{multievenguni} to obtain the improved convergence rate
\begin{equation}\label{tyu}|\mathbb{E}g(W)-\mathbb{E}g(Z)|\leq \frac{K_{p,g}}{n}.
\end{equation}


The Poisson random variable $T\sim \mathrm{Po}(\lambda)$, can be decomposed similarly to the binomial distribution: $T$ is equal in distribution to $\sum_{i=1}^{\lfloor\lambda\rfloor}V_i$, where the $V_i$ are independent $\mathrm{Po}\big(\frac{\lambda}{\lfloor\lambda\rfloor}\big)$ random variables and the floor function $\lfloor x\rfloor$ is the greatest integer less than or equal to $x$.  It is evident that analogous bounds to (\ref{tyui}) and (\ref{tyu}) can be obtained for the normalised Poisson random variable $\frac{T-\lambda}{\sqrt{\lambda}}$.  The only difference is that here $\mathbb{E}X_i^3$ is not equal to zero for any value of $\lambda$, so to obtain the $\lambda^{-1}$ rate we always require that $g$ is an even function.  We refer the reader to \cite{gaunt cmp} for further details, in which such estimates find a surprising application in a derivation of the leading term in the asymptotic expansion of the normalising constant of the Conway-Maxwell-Poisson distribution.


\subsubsection{The delta method}\label{deltasec}

Let $X_1,\ldots,X_n$ be independent random variables with zero mean and unit variance, and let $\overline{X}$ denote the sample mean.  Then, by the delta method,
\begin{equation}\label{deltamethod}\sqrt{n}\big(f(\overline{X})-f(0)\big)\stackrel{\mathcal{D}}{\rightarrow} N(0,[f'(0)]^2)
\end{equation}
for continuous functions $f$ with $f'(0)\not=0$. In Theorem \ref{thmdelta1} below we obtain a $O(n^{-1/2})$ bound for the rate of convergence for the case that the derivatives of $f$ have polynomial growth by applying Theorem \ref{winfirst2} with $g(w)=\sqrt{n}\big(f(w/\sqrt{n})-f(0)\big)/f'(0)$ (note that $W=\sqrt{n}\overline{X}$).  
In the degenerate case that $f'(0)=0$ but $f''(0)\not=0$, we have instead that
\begin{equation}\label{deltamethod2}n\big(f(\overline{X})-f(0)\big)\stackrel{\mathcal{D}}{\rightarrow}\frac{1}{2}f''(0)\chi_{(1)}^2,
\end{equation}
where $\chi_{(1)}^2$ is the chi-square distribution with 1 degree of freedom (see \cite[Theorem 5.5]{h14}).  We can again obtain a $O(n^{-1/2})$ rate of convergence by applying Theorem \ref{winfirst2}, and in Theorem \ref{thmdelta2} below we use Theorem \ref{multievenguni} to obtain a bound with $O(n^{-1})$ rate of convergence for the case that $f$ is an even function (in which case $f'(0)=0$).  As far as the author is aware, this is the first bound in the literature with a $O(n^{-1})$ rate of convergence for the delta method and certainly the first to have been proved using Stein's method.

The limiting results (\ref{deltamethod}) and (\ref{deltamethod2}) generalise to functions $f:\mathbb{R}^d\rightarrow\mathbb{R}^m$.  If the partial derivatives of $f$ are of polynomial growth, we can use Theorems \ref{winfirst1} and (\ref{multieveng}) to obtain bounds for $m=1$, but none of our bounds are applicable for the more general case.  We also refer the reader to \cite{pm16} for a detailed investigation into convergence rates in the multivariate delta method, with optimal order bounds given in the Kolmogorov metric.

\begin{theorem}\label{thmdelta1}Suppose that $f:\mathbb{R}\rightarrow\mathbb{R}$ is twice differentiable with $f'(0)\not=0$, $|f'(w)|\leq A+B|w|^r$ and $|f''(w)|\leq C+D|w|^s$ for all $w\in\mathbb{R}$, where $A$, $B$, $C$, $D$, $r$ and $s$ are non-negative constants.  Let $X_{1},\ldots,X_{n}$ be independent random variables with zero mean, unit variance and $\mathbb{E}|X_i|^{r+4}<\infty$ for all $1\leq i\leq n$.  Define $T_1=\sqrt{n}\big(f(\overline{X})-f(0)\big)/f'(0)$. Then, for absolutely continuous $h:\mathbb{R}\rightarrow\mathbb{R}$, 
\begin{align*}|\mathbb{E}h(T_1)-\mathbb{E}h(Z)|
&\leq\frac{\|h'\|}{|f'(0)|\sqrt{n}}\bigg\{\frac{1}{n}\sum_{i=1}^{n}\bigg[3A\mathbb{E}|X_{i}|^{3}+\frac{2^rB}{n^{r/2}}\bigg(2^{r+1}\mathbb{E}X_i^{4}\big(\mathbb{E}|W|^{r+1}\!+\!\mathbb{E}|W|^{r}\big)\\
&\quad+4\mathbb{E}|Z|^{r+1}\mathbb{E}X_i^{4}+\frac{2^{r+2}}{n^{r/2}}\mathbb{E}|X_i|^{r+4}\bigg)\bigg]+\bigg(C+\frac{D}{n^{s/2}}\mathbb{E}|Z|^{s+2}\bigg)\bigg\}.
\end{align*}
\end{theorem}

\begin{theorem}\label{thmdelta2} Let $f:\mathbb{R}\rightarrow\mathbb{R}$ be twice differentiable with $f'(0)=0$ and $f''(0)\not=0$.  Let $X_{1},\ldots,X_{n}$ be independent random variables with zero mean and unit variance.  Define $T_2=2n\big(f(\overline{X})-f(0)\big)/f''(0)$ and let $Y\sim\chi_{(1)}^2$.

\vspace{3mm}

\noindent (i)  In addition to the above, suppose that $f\in C^3(\mathbb{R})$ with $|f''(w)|\leq A+B|w|^r$ and $|f^{(3)}(w)|\leq C+D|w|^s$ for all $w\in\mathbb{R}$, where $A$, $B$, $C$, $D$, $r$ and $s$ are non-negative constants, and also that $\mathbb{E}|X_i|^{r+4}<\infty$, $1\leq i\leq n$.  Then, for absolutely continuous $h:\mathbb{R}\rightarrow\mathbb{R}$, 
\begin{align*}|\mathbb{E}h(T_2)-\mathbb{E}h(Y)|
&\leq\frac{\|h'\|}{|f''(0)|\sqrt{n}}\bigg\{\frac{1}{n}\sum_{i=1}^{n}\bigg[6A\mathbb{E}|X_{i}|^{3}+\frac{2^{r+1}B}{n^{r/2}}\bigg(2^{r+1}\mathbb{E}X_i^{4}\big(\mathbb{E}|W|^{r+1}\!+\!\mathbb{E}|W|^{r}\big)\\
&\quad+4\mathbb{E}|Z|^{r+1}\mathbb{E}X_i^{4}+\frac{2^{r+2}}{n^{r/2}}\mathbb{E}|X_i|^{r+4}\bigg)\bigg]+\frac{1}{3}\bigg(2\sqrt{\frac{2}{\pi}}C+\frac{D}{n^{s/2}}\mathbb{E}|Z|^{s+3}\bigg)\bigg\}.
\end{align*}

\noindent (ii) Suppose now that $f\in C^4(\mathbb{R})$ is an even function with $|f''(w)|\leq A+B|w|^r$ and $|f^{(4)}(w)|\leq C+D|w|^s$ for all $w\in\mathbb{R}$, where $A$, $B$, $C$, $D$, $r$ and $s$ are non-negative constants, and also that $\mathbb{E}|X_i|^{r+6}<\infty$, $1\leq i\leq n$.  Then, for $h\in C_b^{2}(\mathbb{R})$,
\begin{align*}&|\mathbb{E}h(T_2)-\mathbb{E}h(Y)| \leq \frac{1}{|f''(0)|n}\bigg\{\big(\|h''\|+\|h'\|\big)\bigg\{\frac{1}{n}\sum_{i=1}^{n}\bigg(\frac{1}{3}+\frac{|\mathbb{E}X_{i}^{3}|}{4}\bigg)\\
&\quad\times\bigg[3A\mathbb{E}X_{i}^{4}+\frac{2^rB}{n^{r/2}}\bigg(2^{r+1}\mathbb{E}X_i^{4}\big(\mathbb{E}|W|^{r+1}+\mathbb{E}|W|^{r}\big) \\
&\quad +4\mathbb{E}|Z|^{r+1}\mathbb{E}|X_i|^{5}+\frac{2^{r+2}}{n^{r/2}}\mathbb{E}|X_i|^{r+5}\!\bigg)\bigg]\!+\!\frac{3}{2n^2}\sum_{i=1}^{n}\sum_{l=1}^{n}|\mathbb{E}X_{i}^{3}|\bigg[10A\mathbb{E}X_l^{4} \\
&\quad+\frac{3^{r+1}B}{n^{r/2}}\bigg(2^{r+1}\mathbb{E}X_l^{4}\big(2\mathbb{E}|W|^{r+2}+\mathbb{E}|W|^{r}\big)+16\mathbb{E}|Z|^{r+1}\mathbb{E}X_l^{6}+\frac{2^{r+3}}{n^{r/2}}\mathbb{E}|X_l|^{r+6}\bigg)\bigg]\bigg\} \\
&\quad +\frac{\|h'\|}{12}\bigg(3C+\frac{D}{n^{s/2}}\mathbb{E}|Z|^{s+4}\bigg) +\frac{\|h''\|}{|f''(0)|}\bigg[\frac{5}{3}(f^{(3)}(0))^2+\frac{1}{144n}\big(105C^2+\frac{D^2}{n^{s/2}}\mathbb{E}|Z|^{s+8}\bigg)\bigg]\bigg\}.
\end{align*}
\end{theorem}

\vspace{3mm}

\noindent \emph{Proof of Theorem \ref{thmdelta1}.}  Let $g_1(w)=\sqrt{n}\big(f(w/\sqrt{n})-f(0)\big)/f'(0)$ and define $W=\sqrt{n}\overline{X}$, so that $W=\frac{1}{\sqrt{n}}\sum_{i=1}^nX_i$.  Note that $T_1=g_1(W)$. Then, by the triangle inequality,
\begin{align*}|\mathbb{E}h(T_1)-\mathbb{E}h(Z)|&\leq |\mathbb{E}h(g_1(W))-\mathbb{E}h(g_1(Z))|+|\mathbb{E}h(g_1(Z))-\mathbb{E}h(Z))|\\
&=:R_1+R_2.
\end{align*}
We now note that $g_1'(w)=f'(w/\sqrt{n})/f'(0)$, and therefore $|g_1'(w)|\leq (A+Bn^{-r/2}|w|^r)/|f'(0)|$ for all $w\in\mathbb{R}$.  Hence we can apply Theorem \ref{winfirst2} with the dominating function $P(w)=(A+Bn^{-r/2}|w|^r)/|f'(0)|$ to bound $R_1$.  We now bound $R_2$.  By Taylor expanding $f(w/\sqrt{n})$ about $0$, we have that
\begin{align*}\mathbb{E}h(g_1(Z))=\mathbb{E}h\Big(Z+\frac{f''(\theta Z/\sqrt{n})}{f'(0)\sqrt{n}}Z^2\Big),
\end{align*}  
for some $\theta\in(0,1)$.  Therefore, as $|f''(w)|\leq C+D|w|^s$ for all $w\in\mathbb{R}$,
\begin{align*}|R_2|=|\mathbb{E}h(g_1(Z))-\mathbb{E}h(Z))|&\leq \frac{\|h'\|}{|f'(0)|\sqrt{n}}\mathbb{E}|Z^2f''(\theta Z/\sqrt{n})|\\
&\leq \frac{\|h'\|}{|f'(0)|\sqrt{n}}\mathbb{E}\bigg[Z^2\bigg(C+D\bigg|\frac{\theta Z}{\sqrt{n}}\bigg|^s\bigg)\bigg]\leq \frac{\|h'\|}{|f'(0)|\sqrt{n}}\bigg(C+\frac{D}{n^{s/2}}\mathbb{E}|Z|^{s+2}\bigg).
\end{align*}
Combining our bounds for $R_1$ and $R_2$ yields the desired bound and completes the proof. \hfill $\Box$

\vspace{3mm} 

\noindent\emph{Proof of Theorem \ref{thmdelta2}.} (i) Let $g_2(w)=2n\big(f(w/\sqrt{n})-f(0)\big)/f''(0)$.  Let $W=\sqrt{n}\overline{X}$ so that $T_2=g_2(W)$.  Then, by the triangle inequality,
\begin{align*}|\mathbb{E}h(T_2)-\mathbb{E}h(Z)|&\leq |\mathbb{E}h(g_2(W))-\mathbb{E}h(g_2(Z))|+|\mathbb{E}h(g_2(Z))-\mathbb{E}h(Y))|\\
&=:R_1+R_2.
\end{align*}
On Taylor expanding $g_2'(w)=2\sqrt{n}f'(w/\sqrt{n})/f''(0)$ about 0 and using that $f'(0)=0$ and $|f''(w)|\leq A+B|w|^r$ for all $w\in\mathbb{R}$, we have that $|g_2'(w)|\leq 2(A+B|w/\sqrt{n}|^r)/|f''(0)|$ for all $w\in\mathbb{R}$.  Hence we can bound $R_1$ by applying Theorem \ref{winfirst2} with dominating function $P(w)=2(A+Bn^{-r/2}|w|^r)/|f''(0)|$.  Let us now bound $R_2$. By Taylor expanding $f(w/\sqrt{n})$ about $0$ and using that $f'(0)=0$, we have that 
\begin{align*}\mathbb{E}h(g_2(Z))=\mathbb{E}h\Big(Z^2+\frac{f^{(3)}(\theta Z/\sqrt{n})}{3f''(0)\sqrt{n}}Z^3\Big),
\end{align*}
for some $\theta\in(0,1)$.  Therefore, as $V\stackrel{\mathcal{D}}{=}Z^2$ and $|f^{(3)}(w)|\leq C+D|w|^s$ for all $w\in\mathbb{R}$,
\begin{align*}|R_2|=|\mathbb{E}h(g_2(Z))&-\mathbb{E}h(Z^2))|\leq \frac{\|h'\|}{3|f''(0)|\sqrt{n}}\mathbb{E}|Z^3f^{(3)}(\theta Z/\sqrt{n})|\\
&\leq \frac{\|h'\|}{3|f''(0)|\sqrt{n}}\mathbb{E}\bigg|Z^3\bigg(C+D\bigg|\frac{\theta Z}{\sqrt{n}}\bigg|^s\bigg)\bigg|\leq \frac{\|h'\|}{3|f''(0)|\sqrt{n}}\bigg(2\sqrt{\frac{2}{\pi}}C+\frac{D}{n^{s/2}}\mathbb{E}|Z|^{s+3}\bigg), 
\end{align*}
where we used that $\mathbb{E}|Z|^3=2\sqrt{2/\pi}$ in the final step. Combining the bounds for $R_1$ and $R_2$ now yields the desired bound.

\vspace{3mm}

\noindent (ii) Suppose now that $f$ is an even function, so that $g_2$ is also an even function. 
We have already shown in part (i) that $|g_2'(w)|\leq 2(A+B|w/\sqrt{n}|^r)^{1/2}/|f''(0)|$ for all $w\in\mathbb{R}$.  We also have that $|g_2''(w)|=2|f''(w/\sqrt{n})|/|f''(0)|\leq 2(A+B|w/\sqrt{n}|^r)/|f''(0)|$. Therefore, we can obtain an alternative bound for $R_1$ to the one given in part (i) of the proof by applying Theorem \ref{multievenguni} with dominating function $P(w)=2(A+Bn^{-r/2}|w|^r)/|f''(0)|$.  To bound $R_2$, we begin by proceeding as we did in part (i) but this time Taylor expand one term further to obtain
\begin{align*}\mathbb{E}h(g_2(Z))=\mathbb{E}h\Big(Z^2+\frac{f^{(3)}(0)}{3f''(0)\sqrt{n}}Z^3+\frac{f^{(4)}(\theta Z/\sqrt{n})}{12f''(0)n}Z^4\Big),
\end{align*}
for some $\theta\in(0,1)$.  By another Taylor expansion, we then have that
\begin{align*}|R_2|=|\mathbb{E}h(g_2(Z))-\mathbb{E}h(Z)|&=|R_3+R_4|,
\end{align*}
where
\begin{align*}R_3&=\mathbb{E}\bigg[\bigg(\frac{f^{(3)}(0)}{3f''(0)\sqrt{n}}Z^3+\frac{f^{(4)}(\theta Z/\sqrt{n})}{12f''(0)n}Z^4\bigg)h'(Z^2)\bigg], \\
|R_4|&\leq\frac{\|h''\|}{2}\mathbb{E}\bigg[\bigg(\frac{f^{(3)}(0)}{3f''(0)\sqrt{n}}Z^3+\frac{f^{(4)}(\theta Z/\sqrt{n})}{12f''(0)n}Z^4\bigg)^2\bigg].
\end{align*}
Since $x^3h'(x^2)$ is an odd function and the standard normal distribution is symmetric about $0$, we have that $\mathbb{E}[Z^3h'(Z^2)]=0$.  Hence
\begin{align*}|R_3|=\frac{1}{12|f''(0)|n}\big|\mathbb{E}[Z^4f^{(4)}(\theta Z/\sqrt{n})h'(Z^2)]\big|\leq \frac{\|h'\|}{12|f''(0)|n}\bigg(3C+\frac{D}{n^{s/2}}\mathbb{E}|Z|^{s+4}\bigg),
\end{align*}
where the calculation used to obtain the inequality is now almost routine to us and we made use of the formula $\mathbb{E}Z^4=3$ and the assumption that $|f^{(4)}|\leq C+D|w|^s$ for all $w\in\mathbb{R}$.  Finally, we bound $R_4$.  In the first step we use the simple bound $(a+b)^2\leq 2(a^2+b^2)$ to obtain a simpler bound and then use similar arguments to those used throughout the proof to obtain the bound
\begin{align*}|R_4|\leq\|h''\|\bigg[\frac{(f^{(3)}(0))^2}{9(f''(0))^2n}\cdot 15+\frac{1}{144(f''(0))^2n^2}\bigg(105C^2+\frac{D^2}{n^{s/2}}\mathbb{E}|Z|^{s+8}\bigg)\bigg],
\end{align*}
where we made use of the formulas $\mathbb{E}Z^6=15$, $\mathbb{E}Z^8=105$. Combining the bounds for $R_1$, $R_3$ and $R_4$ (which together bound $R_2$) now gives the desired bound and completes the proof. \hfill $\Box$

\begin{remark}On examining the proof of Theorem \ref{thmdelta1}, one can see that our assumption that $|f''(w)|\leq C+D|w|^s$ for all $w\in\mathbb{R}$ can be substantially weakened.  Our reason for imposing this condition is that we preferred clarity over the most general result possible, and this assumption ties in quite neatly with the assumption that $|f'(w)|\leq A+B|w|^r$ for all $w\in\mathbb{R}$ (which we must impose in order to apply Theorem \ref{winfirst2}).  The same remark applies to the two bounds of Theorem \ref{thmdelta2}.
\end{remark}

\vspace{-2mm}

\subsubsection{Sequence comparison: the $D_2$ and $D_2^*$ statistics in the general case}\label{d2sec}



Word sequence comparison is of importance to biological sequence comparison.  One way of comparing sequences uses $k$-tuples (a sequence of letters of length $k$), with the intuition being that if two sequences are closely related, we would expect their $k$-tuple content to be similar. A statistic for sequence comparison based on $k$-tuple content, known as the $D_2$ statistic, was suggested by \cite{bla}.  

Suppose that the two sequences, $\mathbf{A}=A_1A_2\ldots A_m$ and $\mathbf{B}=B_1B_2\ldots B_n$, say, are composed of i.i.d$.$ letters that are drawn from a finite alphabet $\mathcal{A}$ of size $d$.  The null hypothesis is typically that the two sequences are independent.  For $a\in \mathcal{A}$ let $p_a$ denote the probability of letter $a$.  For $\mathbf{w}=(w_1,\ldots,w_k)\in\mathcal{A}^k$  let
\[X_{\mathbf{w}}=\sum_{i=1}^{\bar{m}}\mathbf{1}(A_i=w_1,\ldots,A_{i+k-1}=w_k)\] 
count the number of occurrences of $\mathbf{w}$ in $\mathbf{A}$.  Here $\bar{m}=m-k+1$.  Similarly, we let $Y_{\mathbf{w}}$ count the number of occurrences of $\mathbf{w}$ in $\mathbf{B}$, and let $\bar{n}=n-k+1$.  For $\mathbf{w}=w_1\cdots w_k$ denote by $p_{\mathbf{w}}=\prod_{i=1}^kp_{w_i}$ the probability of occurrence of $\mathbf{w}$.  Then the $D_2$ statistic is defined by
\[D_2=\sum_{\mathbf{w}\in\mathcal{A}^k}X_{\mathbf{w}}Y_{\mathbf{w}}.\] 
Due to the complicated dependence structure (for a detailed account see \cite{lothaire}) approximating the asymptotic distribution of $D_2$ is a difficult problem.  However, for certain parameter regimes $D_2$ has been shown to be asymptotically normal and Poisson, with error bounds given \cite{lippert,kant07}. 

An alternative to $D_2$ is the $D_2^*$ statistic \cite{waterman}, given by
\[D_2^*=\sum_{\mathbf{w}\in\mathcal{A}^k}\frac{(X_{\mathbf{w}}-\bar{m}p_{\mathbf{w}})(Y_{\mathbf{w}}-\bar{n}p_{\mathbf{w}})}{\sqrt{\bar{m}\bar{n}}p_{\mathbf{w}}},\]
which was shown by \cite{waterman}, through simulation studies, to outperform $D_2$ in terms of power for detecting the relatedness between the two sequences.  Like $D_2$, the $D_2^*$ statistic has a complicated dependence structure and no quantitative limit theorems have yet been derived.  However, the tools developed in this paper offer the possibility to attack this problem, as we shall now sketch.

The $D_2^*$ statistic is motivated (see \cite[Section 2.2]{waterman}) by estimating the standardised counts
\[X_{\mathbf{w}}^0=\frac{X_{\mathbf{w}}-\bar{m}p_{\mathbf{w}}}{\sqrt{\mathrm{Var}X_{\mathbf{w}}}} \quad \mbox{and} \quad Y_{\mathbf{w}}^0=\frac{Y_{\mathbf{w}}-\bar{n}p_{\mathbf{w}}}{\sqrt{\mathrm{Var}Y_{\mathbf{w}}}}.\]
We have that $\mathrm{Var}(X_{\mathbf{w}})=\bar{m}p_{\mathbf{w}}(1-p_{\mathbf{w}})$.  For relatively rare words $\mathbf{w}$ (rare words will occur provided that $k$ is reasonably large) we have $1-p_{\mathbf{w}}\approx 1$ and so we can approximate $\mathrm{Var}(X_{\mathbf{w}})$ by $\bar{m}p_{\mathbf{w}}$, which is less costly to compute.  Now, $X_{\mathbf{w}}^0$ and $Y_{\mathbf{w}}^0$ are sums of locally dependent random variables and so, under certain parameter regimes, for each $\mathbf{w}$, $X_{\mathbf{w}}^0Y_{\mathbf{w}}^0$ is approximately distributed as the product of two independent standard normal random variables (see \cite{waterman}).  However, the random vectors $\mathbf{X}_{\mathbf{w}}^0=(X_{\mathbf{w}}^0:\mathbf{w}\in\mathcal{A}^k)$ are not independent; in fact, for $\mathbf{d}\in\mathcal{A}^k$, $X_{\mathbf{d}}^0$ is determined by the other $d^k-1$ word counts.  A multivariate normal approximation for $\mathbf{X}_{\mathbf{w}}^0$ (in a certain parameter regime) is given by \cite{h02}, and formulas for the covariance matrix are given in \cite{l90} and \cite{waterman}.  Similar comments apply to $\mathbf{Y}_{\mathbf{w}}^0=(Y_{\mathbf{w}}^0:\mathbf{w}\in\mathcal{A}^k)$.  From the above, we see that $D_2^*$ can be written as 
\[D_2^*=\sum_{\mathbf{w}\in\mathcal{A}^k}a_{\mathbf{w}}X_{\mathbf{w}}^0Y_{\mathbf{w}}^0,\]
where $a_{\mathbf{w}}=\sqrt{\mathrm{Var}(X_{\mathbf{w}})\mathrm{Var}(Y_{\mathbf{w}})/(\bar{m}\bar{n}p_{\mathbf{w}}^2)}$.  Hence, $D_2^*$ is of the form $D_2^*=g(\mathbf{X}_{\mathbf{w}}^0,\mathbf{Y}_{\mathbf{w}}^0)$, where $g(\mathbf{x},\mathbf{y})=\sum_{i=1}^{d^k}a_ix_iy_i$.  The limiting distribution is given by $D_{lim}=g(\Sigma^{1/2}\mathbf{Z}_1,\Sigma^{1/2}\mathbf{Z}_2)$, where the multivariate normal random variables $\Sigma^{1/2}\mathbf{Z}_1$ and $\Sigma^{1/2}\mathbf{Z}_2$ are independent and the covariance matrix $\Sigma$ is given in \cite{l90} and \cite{waterman}.

 It is evident that the problem of bounding the quantity $|\mathbb{E}h(D_2^*)-\mathbb{E}h(D_{lim})|$ shares similarities with the quantitative limit theorems derived in this paper. Here, the function $g:\mathbb{R}^{2(d^k-1)}\rightarrow\mathbb{R}$ is smooth with polynomial growth.  The components of the random vector $(\mathbf{X}_{\mathbf{w}}^0,\mathbf{Y}_{\mathbf{w}}^0)$ are standardised sums of random variables and a multivariate normal approximation is valid (in a particular parameter regime).  Moreover, $g$ is an even function, so a convergence rate of order $m^{-1}+n^{-1}$ may be expected.  However, unlike in this paper, the $D_2^*$ statistic has a dependence structure: there is a local dependence in amongst the summands in the word count statistics $X_{\mathbf{w}}^0$, and a global dependence structure as the word counts statistics $X_{\mathbf{w}}^0$ are dependent themselves.  A work in progress of the author is to extend the theory developed in this paper to treat dependence structures of the type found in the $D_2^*$ statistic, with an application to bounding the quantity $|\mathbb{E}h(D_2^*)-\mathbb{E}h(D_{lim})|$.  Some progress to this goal has been made in \cite{gr16}, in which the theory is extended to locally dependent random variables with application to the rate of convergence of some classical statistics.

\subsubsection{Binary sequence comparison}\label{binsec}

Whilst the problem of bounding bounding the quantity $|\mathbb{E}h(D_2^*)-\mathbb{E}h(D_{lim})|$ for general parameter values is beyond the scope of this paper, thanks to Theorems \ref{winfirst1} and \ref{multieveng} we are, however, able to treat the special case of binary sequence comparison; some details regarding this problem for the $D_2$ statistic are also given in Section 5 of \cite{lippert}.

Consider an alphabet of size $2$ with comparison based on the content of $1$-tuples.  Suppose that the sequences are of length $m$ and $n$, the alphabet is $\{0,1\}$, and $\mathbb{P}(0 \mbox{ appears})=p$ and $\mathbb{P}(1 \mbox{ appears})=q$, with $p+q=1$.  Denoting the number of occurrences of $0$ in the two sequences by $X$ and $Y$, then
\begin{align}D_2^*&=\frac{(X-mp)(Y-np)}{\sqrt{mn}p}+\frac{(m-X-mq)(n-Y-nq)}{\sqrt{mn}q}\nonumber\\
\label{d234}&=\bigg(\frac{X-mp}{\sqrt{mpq}}\bigg)\bigg(\frac{Y-np}{\sqrt{mpq}}\bigg).
\end{align}
By the central limit theorem, $(X-mp)/\sqrt{mpq}$ and $(Y-np)/\sqrt{npq}$ are approximately $N(0,1)$ distributed, and so $D_2^*$ is approximately distributed as $Z_1Z_2$, where $Z_1$ and $Z_2$ are independent $N(0,1)$ random variables.  By Proposition 1.2 of \cite{gaunt vg}, $Z_1Z_2$ is a variance-gamma $\mathrm{VG}(1,0,1,0)$ random variable with density $\frac{1}{\pi}K_0(|x|)$, $x\in\mathbb{R}$, where $K_0(u)=\int_0^\infty (1+t^2)^{-1/2}\cos(ut)\,\mathrm{d}t$ is a modified Bessel function of the second kind. 
In the case $p=q=\frac{1}{2}$, straightforward calculations (see \cite[Section 5]{lippert}) show that
\begin{equation*} \label{cox apple} D2z=\frac{D_2-\mathbb{E}D_2}{\sqrt{\mathrm{Var}(D_2)}} =\bigg(\frac{X-\frac{m}{2}}{\sqrt{\frac{m}{4}}}\bigg)\bigg(\frac{Y-\frac{n}{2}}{\sqrt{\frac{n}{4}}}\bigg). 
\end{equation*}
Note that in this case $D2z=D_2^*$.  We quantify these variance-gamma approximations of $D2z$ and $D_2^*$ in Theorem \ref{d2thm}.  It should be noted that if $p\not=\frac{1}{2}$, then a normal approximation for $D2z$ is more suitable; see again \cite[Section 5]{lippert}.

\begin{theorem}\label{d2thm}Consider the $D_2^*$ statistic, as given by (\ref{d234}), based on 1-tuple content, for i.i.d$.$ binary sequences of lengths $m$ and $n$ drawn from an alphabet $\{0,1\}$ for which $\mathbb{P}(0 \mbox{ appears})=p$, $\mathbb{P}(1 \mbox{ appears})=q$, with $p\in(0,1)$ and $p+q=1$.
Let $V\sim\mathrm{VG}(1,0,1,0)$.  Then, for $h\in C_b^4(\mathbb{R})$,
\begin{equation}\label{ineqwer}|\mathbb{E}h(D_2^*)-\mathbb{E}h(V)|\leq \bigg(179+4411\bigg|\sqrt{\frac{q}{p}}-\sqrt{\frac{p}{q}}\bigg|\bigg)\bigg(\frac{q^4}{p^3}+\frac{p^4}{q^3}\bigg)\bigg(\frac{1}{m}+\frac{1}{n}\bigg)\big(\|h^{(4)}\|+6\|h^{(3)}\|+7\|h''\|+\|h'\|\big).
\end{equation}

Suppose now that $p=q=\frac{1}{2}$.  Then we have the improved bound: for $h\in C_b^3(\mathbb{R})$,
\begin{equation}\label{fiop}|\mathbb{E}h(D_2^*)-\mathbb{E}h(V)|\leq 193\bigg(\frac{1}{m}+\frac{1}{n}\bigg)\big(\|h^{(3)}\|+3\|h''\|+\|h'\|\big).
\end{equation}
As $D2z=D_2^*$ in this case, we have the same bound for $|\mathbb{E}h(D2z)-\mathbb{E}h(V)|$.
\end{theorem}

\begin{proof}Let $\mathbb{I}_i$ and $\mathbb{J}_i$ be the indicator random variables that the letter $0$ occurs at position $i$ in the first and second sequences, respectively.  Then $X=\sum_{i=1}^m\mathbb{I}_i$ and $Y=\sum_{j=1}^n\mathbb{J}_j$, and we may write
\[D_2^*=\bigg(\frac{X-mp}{\sqrt{mpq}}\bigg)\bigg(\frac{Y-np}{\sqrt{npq}}\bigg)=\bigg(\frac{1}{\sqrt{m}}\sum_{i=1}^{m}X_{i}\bigg)\bigg(\frac{1}{\sqrt{n}}\sum_{j=1}^{n}Y_{j}\bigg)=:W_1W_2,\]
where $X_{i}=(\mathbb{I}_i-p)/\sqrt{pq}$ and $Y_{j}=(\mathbb{J}_j-p)/\sqrt{pq}$.  The random variables $X_{1},\ldots,X_m$ and $Y_{1},\ldots,Y_n$ are i.i.d$.$ with zero mean and unit variance.  

We first consider the general $p\in(0,1)$ case and prove inequality (\ref{ineqwer}) using Theorem \ref{multieveng} with $g(u,v)=uv$ (which is an even function).  Now, $\partial_ug=v$, $\partial_vg=u$,
$\partial_{uv}g=1$ and all other partial derivatives are equal to zero.  We can therefore take $P(u,v)=1+u^4+v^4$ as our dominating function.  On applying Theorem \ref{multieveng} with $d=2$, $p=2$, $A=1$, $B=1$ and $r_1=r_2=4$ we obtain a bound for the quantity $|\mathbb{E}h(D_2^*)-\mathbb{E}h(V)|$.  In applying the theorem a number of expectations need to be computed: we have
\begin{align*}|\mathbb{E}X_1^3|=\bigg|\sqrt{\frac{q}{p}}-\sqrt{\frac{p}{q}}\bigg|, \quad \mathbb{E}X_1^8=\frac{q^4}{p^3}+\frac{p^4}{q^3}, \quad \mathbb{E}Z^4=3, \quad \mathbb{E}|Z|^5=8\sqrt{\frac{2}{\pi}},
\end{align*}
where $Z\sim N(0,1)$.  In order to obtain a compact final bound, we use that $1\leq\mathbb{E}|X_1|^a\leq\mathbb{E}X_1^8$ for $2\leq a\leq 8$, which follows from H\"{o}lder's inequality, and that, again by H\"{o}lder's inequality,
\[\mathbb{E}|X_1^3Y_1^4|=\mathbb{E}|X_1|^3\mathbb{E}X_1^4\leq \big(\mathbb{E}X_1^4\big)^2\leq \mathbb{E}X_1^8\]
and
\begin{align*}\mathbb{E}|X_1|^3\mathbb{E}W_1^4\leq \mathbb{E}X_1^4\mathbb{E}W_1^4=\mathbb{E}X_1^4\bigg(\frac{3(n-1)}{n}+\frac{\mathbb{E}X_1^4}{n}\bigg)<3\big(\mathbb{E}X_1^4\big)^2\leq3\mathbb{E}X_1^8,
\end{align*}
as well as the simple inequality $\frac{1}{\sqrt{mn}}\leq\frac{1}{2}\big(\frac{1}{m}+\frac{1}{n}\big)$ and the crude inequalities $m\geq1$, $n\geq1$.  Combining the above formulas and inequalities with the bound of Theorem \ref{multieveng} then yields inequality (\ref{ineqwer}) after rounding the constants up to the nearest integer.

Now suppose that $p=\frac{1}{2}$.  In this case, $\mathbb{E}X_1^3=0$ and we may therefore apply Theorem \ref{winfirst1} to obtain a bound for $|\mathbb{E}h(D_2^*)-\mathbb{E}h(V)|$, which allows us to weaken the class of test functions to $C_b^3(\mathbb{R})$.  Again we have $g(u,v)=uv$, but we now take $P(u,v)=1+|u|^3+|v|^3$ as our dominating function.  We thus apply Theorem \ref{winfirst1} with $d=2$, $p=3$, $A=1$, $B=1$ and $r_1=r_2=3$.   In obtaining our bound we use that $\mathbb{E}|X_1|^a=1$ for all $a>0$, and that, by H\"older's inequality,
\begin{equation*}\mathbb{E}|W_1|^3\leq\big(\mathbb{E}W_1^4)^{3/4}=\bigg(\frac{3(n-1)}{n}+\frac{\mathbb{E}X_1^4}{n}\bigg)^{3/4}=\bigg(\frac{3n-2}{n}\bigg)^{3/4}<3^{3/4}.
\end{equation*}
To obtain the compact bound (\ref{fiop}) for $|\mathbb{E}h(D_2^*)-\mathbb{E}h(V)|$ we then proceed as we did in obtaining our bound for the general $p\in(0,1)$ case.
\end{proof}


\begin{remark}





For the case $p=\frac{1}{2}$, a bound of order $m^{-1}+n^{-1}$ for the quantity $|\mathbb{E}h(D2z)-\mathbb{E}h(V)|$ (for bounded test functions $h$ whose first three derivatives are also bounded) is given in \cite{gaunt vg} and Wasserstein and Kolmogorov distance bounds between the distributions of $D2z$ and $V$ with slower convergence rates are given in \cite{gaunt vg2}.  Our bound (\ref{fiop}) improves on that of \cite{gaunt vg} by having smaller constants and not requiring $h$ to be bounded, and our bound \ref{ineqwer} is the first in the literature to treat the general $p\in(0,1)$ case. 

Let us discuss the bound (\ref{ineqwer}) in more detail.  The bound increases as the quantity $|p-q|$ increases and, for fixed $m$ and $n$, blows up in the limits $p\rightarrow0$ and $p\rightarrow1$, but the bound still tends to 0 in the limit $np^{7/2}(1-p)^{7/2}\rightarrow\infty$.  However, $(X-mp)/\sqrt{mpq}$ and $(Y-np)/\sqrt{npq}$ convergence in distribution to the standard normal distribution in the limits $mp(1-p)\rightarrow\infty$ and $np(1-p)\rightarrow\infty$, respectively, so, by the continuous mapping theorem, the $\mathrm{VG}(1,0,1,0)$  approximation of $D_2^*$ is also valid if $mp(1-p)\gg1$ and $np(1-p)\gg1$.  The reason that our bound does not have an optimal dependence on the parameter $p$ is an artefact of the fact that we applied the general bound of Theorem \ref{multieveng} that is not optimised for this particular application.  Centering indicator random variables (as we must do to apply Theorem \ref{multieveng}) is not very efficient in terms of obtaining bounds with optimal dependence on the parameter $p$, and we refer the reader to the proof of Theorem 4.1 of \cite{gaunt chi square}, particularly the Taylor expansion on p$.$ 739, for a specialised  method for dealing with a statistic that is expressed in terms of indicator random variables that leads to an optimal dependence on all parameters.  
\end{remark}


\section*{Acknowledgements}
During the course of this research, the author was supported by an EPSRC DPhil Studentship, an EPSRC Doctoral Prize and  EPSRC grant EP/K032402/1.  The author is currently supported by a Dame Kathleen Ollerenshaw Research Fellowship.  The author would like to thank Gesine Reinert for helpful discussions.
Finally, the author would like to thank the anonymous referees for their constructive comments and suggestions that have led to an improved paper.

\end{document}